\documentclass{amsart}
\usepackage[utf8]{inputenc}
\usepackage[english]{babel}
\usepackage{amsthm}
\usepackage{amsfonts}
\usepackage{amsmath}
\usepackage{amssymb}
\usepackage{enumerate}
\usepackage{comment}

\newcommand{\R}{\mathbb{R}}
\newcommand{\C}{\mathbb{C}}

\DeclareMathOperator{\rank}{rank}

\DeclareMathOperator{\cone}{cone}
\DeclareMathOperator{\conv}{conv}

\newcommand{\spec}{\text{spec}}

\newcommand\restr[2]{{
  \left.\kern-\nulldelimiterspace 
  #1 
  \vphantom{\big|} 
  \right|_{#2} 
  }}

\makeatletter
\def\th@plain{%
  \thm@notefont{}
  \itshape 
}
\def\th@definition{%
  \thm@notefont{}
  \normalfont 
}
\makeatother

\numberwithin{equation}{section}

\theoremstyle{plain}
\newtheorem{lause}[equation]{Theorem}
\newtheorem{lem}[equation]{Lemma}

\newtheorem{kor}[equation]{Corollary}

\theoremstyle{definition}
\newtheorem{maar}[equation]{Definition}

\theoremstyle{remark}
\newtheorem{huom}[equation]{Remark}

\setcounter{page}{1}
\addtolength{\hoffset}{-1.15cm}
\addtolength{\textwidth}{2.3cm}
\addtolength{\voffset}{0.45cm}
\addtolength{\textheight}{-0.9cm}

\title{Characterizing matrix monotonicity of fixed order on general sets}
\author{Otte Heinävaara}
\address{University of Helsinki, Department of Mathematics and Statistics, P.O. Box 68 (Gustaf Hällströmin katu 2b), FI-00014 University of Helsinki}
\email{otte.heinavaara@helsinki.fi}

\begin{document}

\begin{abstract}
	We give new characterizations for matrix monotonicity and convexity of fixed order which connects previous characterizations by Loewner, Dobsch, Donoghue, Kraus and Bendat--Sherman. The ideas introduced are then used to characterize matrix monotone functions of arbitrary order on general subsets of the real line.
\end{abstract}

\subjclass[2010]{Primary 26A48; Secondary 26A51, 47A63}
\keywords{Matrix monotone functions, Matrix convex functions}

\maketitle

\section{Introduction}

A function $f : (a, b) \to \R$ is said to be $n$-monotone (increasing), if for any two $n \times n$ Hermitian matrices $A, B$, with $aI < A \leq B < bI$\footnote{As usual, matrices are ordered by the Loewner order, partial order induced by the cone of positive semidefinite matrices} one also has $f(A) \leq f(B)$. In a similar vein, $f$ is said to be $n$-convex, if for any $t \in [0, 1]$ and $a I < A, B < b I$ one has $f(t A + (1 - t) B) \leq t f(A) + (1 - t) f(B)$.

In his seminal 1934 paper \cite{Low} Loewner proved that a function, which is $n$-monotone for all $n \geq 1$ is actually real analytic. Moreover, it extends to upper half-plane as a Pick function; function with non-negative imaginary part. One of the steps in the proof of this results is the following characterization of $n$-monotone functions with a matrix of divided differences. Writing
\begin{align*}
	[x, y]_{f} = \begin{cases}
				\frac{f(x) - f(y)}{x - y} & \text{if $x \neq y$} \\
				f'(x) & \text{if $x = y$}
				\end{cases}
\end{align*}
one has
\begin{lause}[Loewner]\label{basic_mon}
A function $f : (a, b) \to \R$ is $n$-monotone (for $n \geq 2$) if and only if $f \in C^{1}(a, b)$ and the Loewner matrix
\begin{align*}
L = ([\lambda_{i}, \lambda_{j}]_{f})_{1 \leq i, j \leq n}
\end{align*}
is positive for any $\lambda_{1}, \lambda_{2}, \ldots, \lambda_{n} \in (a, b)$.\footnote{In this paper we reserve the term \textit{positive} matrix to denote positive semidefinite matrices.}
\end{lause}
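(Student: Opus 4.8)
The plan is to reduce the global monotonicity condition to an infinitesimal one and then recognize the resulting linear map as Schur (Hadamard) multiplication by the Loewner matrix. Assume for the moment that $f \in C^{1}(a,b)$; regularity will be discussed at the end. The key observation is that $n$-monotonicity is equivalent to positivity of the Fréchet derivative of the induced matrix function. Indeed, if $Df(A)[H] \geq 0$ for every Hermitian $A$ with spectrum in $(a,b)$ and every $H \geq 0$, then for $aI < A \leq B < bI$ the path $A(t) = A + t(B - A)$, $t \in [0,1]$, stays in the domain and satisfies $A'(t) = B - A \geq 0$, so $f(B) - f(A) = \int_{0}^{1} Df(A(t))[B - A]\,dt \geq 0$ because each integrand, hence the integral, is positive. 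Conversely, from $f(A + tH) \geq f(A)$ one divides by $t$ and lets $t \to 0^{+}$ to conclude that $n$-monotonicity forces $Df(A)[H] \geq 0$ for all $H \geq 0$.

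Next I would compute $Df(A)[H]$ explicitly. Diagonalizing $A = U \Lambda U^{*}$ with $\Lambda = \operatorname{diag}(\lambda_{1}, \ldots, \lambda_{n})$, the Daleckii--Krein formula gives $Df(A)[H] = U\bigl(L \circ (U^{*} H U)\bigr)U^{*}$, where $L = ([\lambda_{i}, \lambda_{j}]_{f})_{i,j}$ is precisely the Loewner matrix and $\circ$ is the entrywise product. Since $H \mapsto U^{*} H U$ is a positivity-preserving bijection of the Hermitian matrices, the condition ``$Df(A)[H] \geq 0$ whenever $H \geq 0$'' is equivalent to ``$L \circ K \geq 0$ whenever $K \geq 0$''. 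By the Schur product theorem the latter holds as soon as $L \geq 0$; conversely, taking $K$ to be the all-ones matrix $\mathbf{1}\mathbf{1}^{*}$ gives $L \circ K = L$, so the condition in fact forces $L \geq 0$. Letting $A$ range over diagonal matrices recovers every tuple of distinct $(\lambda_{1}, \ldots, \lambda_{n})$, and the case of repeated eigenvalues follows by continuity of divided differences since $f \in C^{1}$. I would also record that $n$-monotonicity implies $(n-1)$-monotonicity (pad the matrices by a common diagonal block), so in particular $f$ is increasing; this closes the equivalence modulo regularity.

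The step I expect to be the genuine obstacle is showing that $n$-monotonicity for $n \geq 2$ already forces $f \in C^{1}$; for $n = 1$ this is false, since every increasing function qualifies, so the argument must exploit genuinely matricial, non-diagonal information. Here I would work entirely with $2 \times 2$ matrices. First, $f$ is continuous: an increasing function has one-sided limits everywhere, and a jump can be ruled out by testing $2$-monotonicity on diagonal matrices $A \leq B$ dressed with a small off-diagonal coupling, where the comparison $f(A) \leq f(B)$ becomes incompatible with a nonzero jump. Once $f$ is continuous, I would extract from the inequalities $f(A) \leq f(A + s\,uu^{*})$, for rank-one directions $uu^{*}$ and $s \to 0^{+}$, uniform control on the difference quotients $[x,y]_{f}$ for $x, y$ near a given point, forcing the one-sided derivatives to exist, to agree, and to vary continuously. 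This yields $f \in C^{1}$ and simultaneously supplies the diagonal entries $f'(\lambda_{i})$ that make the Loewner matrix well-defined in the first place. The delicacy is exactly that this regularity is a matrix phenomenon: one must convert the operator inequality $f(A) \leq f(B)$, which entangles the two eigendirections of a $2 \times 2$ matrix, into scalar information about $f$ near a single point.
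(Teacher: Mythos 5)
Your treatment of the theorem under the standing hypothesis $f \in C^{1}(a,b)$ is correct and is the standard modern route: Fr\'echet differentiability of $A \mapsto f(A)$, the Daleckii--Krein formula $Df(A)[H] = U\bigl(L \circ (U^{*}HU)\bigr)U^{*}$, the Schur product theorem for sufficiency, and the rank-one all-ones test matrix $\mathbf{1}\mathbf{1}^{*}$ for necessity, with repeated eigenvalues recovered by continuity of $[\cdot,\cdot]_{f}$. Two remarks on this half. First, differentiability of the matrix function under mere $C^{1}$ (rather than $C^{2}$) is true but not free; it deserves a proof or a citation (e.g.\ via uniform polynomial approximation of $f'$ on a compact spectral interval). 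Second, this is a genuinely different route from the present paper, which never differentiates a matrix function: here Theorem \ref{basic_mon} is quoted from Loewner \cite{Low} and is in effect re-derived as condition (\ref{cond5}) of Theorem \ref{monotone_full}, by identifying the Loewner quadratic form $\sum_{i,j} c_{i}\overline{c_{j}}\,[x_{i},x_{j}]_{f}$ with a doubled-point divided difference $[x_{1},x_{1},\ldots,x_{n},x_{n}]_{fN(q)}$ and invoking the divided-difference characterization of Theorem \ref{main_monotone}; the regularity then falls out structurally from $(2n-1)$-tonicity via Lemma \ref{k_tone_regularity} (Corollary \ref{mon_reg}), rather than being a separate analytic lemma.

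The genuine gap is your final paragraph. The implication ``$n$-monotone for $n \geq 2$ implies $f \in C^{1}$'' is part of the statement (without it the diagonal entries of $L$ are not even defined), and what you give is a plan, not a proof. Ruling out jumps with a $2 \times 2$ off-diagonal perturbation can be made rigorous, but the passage from continuity to existence \emph{and} continuity of $f'$ --- ``extract uniform control on the difference quotients \ldots\ forcing the one-sided derivatives to exist, to agree, and to vary continuously'' --- is precisely the multi-page core of the classical treatments (Loewner \cite{Low}, Donoghue \cite{Don}), and your sketch names the goal without supplying the mechanism: one needs concrete four-point divided-difference inequalities extracted from $2 \times 2$ monotonicity, or else a detour such as mollification (first rule out jumps, then note $f * \phi_{\epsilon}$ is smooth and $n$-monotone on a slightly smaller interval; the smooth case, pushed through coalescing points to the congruent extended Loewner matrices, yields $f_{\epsilon}^{(2n-1)} \geq 0$, i.e.\ $(2n-1)$-tonicity, which survives the pointwise limit $\epsilon \to 0$ and gives $f \in C^{2n-3} \subset C^{1}$ by Lemma \ref{k_tone_regularity}). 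As it stands, your proposal is a complete and correct proof of the equivalence \emph{for $C^{1}$ functions}, plus an IOU for the regularity half; you correctly located the obstacle, but locating it does not discharge it.
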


In 1937 Dobsch \cite{Dob} gave an alternate characterization for $n$-monotonicity.

\begin{lause}[Dobsch, Donoghue]\label{hankel_mon}
A $C^{2 n - 1}$ function $f : (a, b) \to \R$ is $n$-monotone if and only if the Dobsch matrix
\begin{align*}
M(t) = \left(\frac{f^{(i + j - 1)}(t)}{(i + j - 1)!}\right)_{1 \leq i, j \leq n}
\end{align*}
is positive for any $t \in (a, b)$.
\end{lause}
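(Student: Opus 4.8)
The plan is to deduce the statement from Loewner's characterization (Theorem~\ref{basic_mon}): since $f$ is $n$-monotone if and only if the Loewner matrix $L = ([\lambda_i,\lambda_j]_f)$ is positive for every tuple $\lambda_1,\dots,\lambda_n \in (a,b)$, it suffices to show that this holds exactly when $M(t)$ is positive for all $t$. The bridge between the two matrices is the Taylor expansion of the divided differences about a point. Writing $u_i = \lambda_i - t$ and $c_m(t) = f^{(m)}(t)/m!$, the computation $\frac{u_i^{p} - u_j^{p}}{u_i - u_j} = \sum_{a+b=p-1} u_i^{a}u_j^{b}$ applied termwise to the Taylor polynomial of $f$ at $t$ gives
\begin{align*}
[\lambda_i,\lambda_j]_f = \sum_{\substack{a,b\ge 0 \\ a+b\le 2n-2}} c_{a+b+1}(t)\, u_i^{a} u_j^{b} + R_{ij},
\end{align*}
where $R_{ij}$ is a Taylor remainder. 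The sub-block with $a,b\le n-1$ is precisely $(V M(t) V^{T})_{ij}$ for the Vandermonde matrix $V = (u_i^{b})_{i,\,b=0}^{\,n-1}$, while the surviving terms involve only the higher coefficients $c_{n+1},\dots,c_{2n-1}$. This is where the hypothesis $f\in C^{2n-1}$ enters: it is exactly the regularity needed to expand to order $2n-1$ and to control $R$.

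For the direction Theorem~\ref{basic_mon} $\Rightarrow$ positivity of $M(t)$ I would argue by confluence. Fix $t$ and distinct reals $s_1,\dots,s_n$, set $\lambda_i = t + \epsilon s_i$, and for a target vector $\xi\in\R^n$ choose the test vector $\eta = \eta(\epsilon)$ solving the (invertible) Vandermonde system $\sum_i \eta_i s_i^{a} = \epsilon^{-a}\xi_{a+1}$ for $a=0,\dots,n-1$. Then $\sum_i \eta_i u_i^{a} = \xi_{a+1}$ for each such $a$, so the diagonal block contributes exactly $\xi^{T}M(t)\xi$. Since $|\eta_i| = O(\epsilon^{-(n-1)})$, the cross terms (one index $\ge n$) are $O(\epsilon^{\,b-n+1})\to 0$, terms with both indices $\ge n$ cannot occur because $a+b\le 2n-2$, and $\eta^{T}R\,\eta = O(\epsilon^{-2(n-1)})\cdot o(\epsilon^{2n-2}) \to 0$. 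Hence $\eta(\epsilon)^{T} L\, \eta(\epsilon) \to \xi^{T} M(t)\xi$; as each $L$ is positive, so is $M(t)$.

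The converse is the main obstacle. A natural first move is to reduce to $f\in C^{\infty}$ by mollification: if $\phi_\delta\ge 0$ is an approximate identity and $f_\delta = f*\phi_\delta$, then $M_{f_\delta}(t) = \int M_f(t-s)\phi_\delta(s)\,ds$ is a positive combination of positive matrices, hence positive, while $f_\delta\to f$ in $C^{2n-1}$ locally and the closed condition $L\ge 0$ passes to the limit. The real difficulty is then upgrading the \emph{pointwise} positivity of $M(t)$ to the \emph{global} secant positivity of $L$ for widely separated $\lambda_i$. The local congruence $L \approx V M(t) V^{T}$ is not enough on its own, since for clustered points the error is of order $\epsilon^{n}$ whereas the smallest eigenvalue of $VM(t)V^{T}$ can be as small as $\epsilon^{2(n-1)}$, so positivity is not preserved. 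What is needed is an honest representation of the full Loewner quadratic form as a superposition of Dobsch forms $M(\tau)$ along the segment spanned by the $\lambda_i$. The prototype is $n=2$, where the condition $\det M(t)\ge 0$ is equivalent to concavity of $(f')^{-1/2}$, and the inequality $f'(x)f'(y)\ge [x,y]_f^{2}$ then follows from a Jensen-type estimate of $\int_0^1 f'(\ell(s))\,ds$ along the interpolating segment $\ell$. I expect the heart of the argument to be the construction of the higher-order analogue of this integral/convexity mechanism, and I would regard assembling that representation as the step most likely to require genuinely new input rather than routine manipulation.
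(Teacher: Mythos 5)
Your forward direction ($n$-monotone $\Rightarrow M(t) \geq 0$) is correct and is essentially Dobsch's classical confluence argument: the rescaled test vectors $\eta(\epsilon)$ solving the Vandermonde system do give $\eta(\epsilon)^{T} L\, \eta(\epsilon) \to \xi^{T} M(t)\xi$, and your bookkeeping of the cross terms and of the $o(\epsilon^{2n-2})$ remainder is exactly what the $C^{2n-1}$ hypothesis supports. But the converse --- pointwise positivity of $M$ implies $n$-monotonicity, which is Donoghue's direction and the actual substance of the theorem --- is not proven in your proposal. The mollification step is fine as far as it goes, but it only reduces to the smooth case, and for the smooth case you offer an obstruction analysis (correctly diagnosing why the naive local congruence $L \approx V M(t) V^{T}$ fails for separated nodes) followed by a conjecture that a superposition of Dobsch forms along the segment should exist, which you yourself flag as requiring genuinely new input. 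So you have one implication, not two; this is a genuine gap, not a routine omission.

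For comparison, the paper does not attack $L$ directly at all. It first proves Theorem \ref{main_monotone}: $f$ is $n$-monotone if and only if $[x_{0}, \ldots, x_{2n-1}]_{f N(q)} \geq 0$ for all $q \in \C_{n-1}[x]$; the new input is the algebraic projection-pair identity of Lemma \ref{main_identity}, which represents $\langle ((zI - B)^{-1} - (zI - A)^{-1}) w, w \rangle$ exactly as $N(q)(z)/\prod_{i}(z - x_{i})$. Granted that, the hard direction of Dobsch--Donoghue under the $C^{2n-1}$ hypothesis is two lines: the identity
\begin{align*}
\sum_{i, j = 1}^{n} \frac{c_{i} \overline{c_{j}}}{(\cdot - t)^{i + j}} = \frac{N(q)}{(\cdot - t)^{2n}}, \qquad q = \sum_{i = 1}^{n} c_{i} (\cdot - t)^{n - i},
\end{align*}
shows that $c^{*} M(t)\, c = (f N(q))^{(2n-1)}(t)/(2n-1)!$, so positivity of $M$ everywhere means $(f N(q))^{(2n-1)} \geq 0$ everywhere for every such $q$; the mean value theorem for divided differences (Lemma \ref{divided_basic} (ii)) applied to $f N(q) \in C^{2n-1}$ then gives $[x_{0}, \ldots, x_{2n-1}]_{f N(q)} = (f N(q))^{(2n-1)}(\xi)/(2n-1)! \geq 0$, and Theorem \ref{main_monotone} concludes. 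It is worth noting that the superposition you conjecture does exist: it is the Peano kernel representation of the confluent divided difference $[x_{1}, x_{1}, \ldots, x_{n}, x_{n}]_{f}$ (Lemma \ref{peano_representation}), and Theorem \ref{monotone_integral_formula} is precisely the representation $L = \int C^{T} M(t) C\, I(t)\, dt$ with $I \geq 0$ --- so your instinct about what is missing is accurate. The lesson of the paper, however, is that once the matrix inequality is converted into the scalar quantities $[x_{0}, \ldots, x_{2n-1}]_{f N(q)}$, the one-dimensional mean value theorem substitutes for that full integral representation, and the genuine difficulty is relocated from analysis to the algebraic Lemma \ref{main_identity}.
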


This characterization has a striking consequence: $n$-monotonicity is a local property, i.e. if $f$ is $n$-monotone on two overlapping open intervals, then it is $n$-monotone on their union. This property, stated by Loewner to be easy (\cite[p. 212, Theorem 5.6]{Low}), was actually used in Dobsch' argument, but was rigorously proved only after almost 40 years by Donoghue in \cite[XIV, Theorem V]{Don}. Donoghue's argument is relatively convoluted, but a simpler approach connecting \ref{basic_mon} and \ref{hankel_mon} directly was developed by the author in \cite{Heina}. It was proven that one has an integral representation connecting Loewner and Dobsch matrices:

\begin{align*}
	L(\Lambda) = (2 n - 1)\int_{-\infty}^{\infty} C^{T}(t, \Lambda)M(t)C(t, \Lambda)I_{\Lambda}(t)d t.
\end{align*}
where $C$ is certain matrix entries of which are polynomials of $t$ and rational functions in $\lambda$'s, and $I$ is some piecewise polynomial compactly supported function (see section \ref{integral_repr_section} for details).

In this paper, a new characterizations for $n$-monotonicity in terms of (higher order) divided differences is given. Recall that divided differences (here denoted by $[\cdot, \ldots, \cdot]_{f}$) are defined recursively by $[\lambda]_{f} = f(\lambda)$ and for pairwise distinct $\lambda_{0}, \lambda_{1}, \lambda_{2}, \ldots, \lambda_{n} \in (a, b)$, divided difference of order $n$ is defined recursively via
\begin{align*}
	[\lambda_{0}, \lambda_{1}, \ldots, \lambda_{n}]_{f} = \frac{[\lambda_{0}, \lambda_{1}, \ldots, \lambda_{n - 1}]_{f} - [\lambda_{1}, \lambda_{2}, \ldots, \lambda_{n}]_{f}}{\lambda_{0} - \lambda_{n}}.
\end{align*}
Equivalently, as one may easily check, one has
\begin{align*}
	[\lambda_{0}, \lambda_{1}, \ldots, \lambda_{n}]_{f} = \sum_{i = 0}^{n} \frac{f(\lambda_{i})}{\prod_{j \neq i} (\lambda_{i} - \lambda_{j})}.
\end{align*}
If $f \in C^{n}(a, b)$, divided difference has continuous extension to all tuples of not necessarily distinct $n + 1$ numbers on the interval (see \cite{Boo}).

\begin{lause}\label{main_monotone}
	A function $f : (a, b) \to \R$ is $n$-monotone, if and only if for any $q \in \R_{n - 1}[x]$ and pairwise distinct $x_{0}, x_{1}, \ldots, x_{2 n - 1} \in (a, b)$ one has
	\begin{align*}
		[x_{0}, x_{1}, \ldots, x_{2 n - 1}]_{f q^{2}} \geq 0.
	\end{align*}
\end{lause}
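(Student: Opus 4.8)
The plan is to route the equivalence through the Dobsch--Donoghue characterization (Theorem \ref{hankel_mon}) using two ingredients: a pointwise identity rewriting the top derivative of $f q^{2}$ as a quadratic form in the Dobsch matrix, and the Hermite--Genocchi representation of a divided difference as an average of top derivatives against a nonnegative measure. Before either can be applied I would first reduce to $f \in C^{\infty}$ by mollification. Convolution with a nonnegative mollifier $\phi$ preserves all three relevant properties: $n$-monotonicity, since $f * \phi(A) = \int f(A - sI)\phi(s)\,ds$ is an average of the monotone maps $A \mapsto f(A - sI)$; and the divided-difference condition, since by translation covariance $[x_{0}, \ldots, x_{2n-1}]_{f(\cdot - s)q^{2}} = [x_{0} - s, \ldots, x_{2n-1} - s]_{f\,\tilde{q}^{2}}$ with $\tilde{q}(u) = q(u + s)$ of the same degree, so every summand is nonnegative. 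As $\phi$ tends to a point mass, $f * \phi \to f$ locally uniformly; a fixed divided difference (a finite linear combination of point values) and $n$-monotonicity (a closed condition under local uniform convergence, via continuity of the spectral calculus) both pass to the limit. Hence it suffices to treat smooth $f$, where Theorem \ref{hankel_mon} applies directly.

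The heart of the argument is the algebraic identity, valid for $f \in C^{2n-1}$ and $q \in \R_{n-1}[x]$,
\begin{align*}
	\frac{(f q^{2})^{(2n-1)}(t)}{(2n-1)!} = c(t)^{T} M(t)\, c(t), \qquad c(t)_{i} = \frac{q^{(n-i)}(t)}{(n-i)!} \quad (1 \leq i \leq n).
\end{align*}
I would prove it by expanding the left-hand side with the Leibniz rule and comparing the coefficient of $f^{(k)}(t)$ on each side: on the left it is $\tfrac{1}{k!\,(2n-1-k)!}(q^{2})^{(2n-1-k)}(t)$, while on the right, collecting the pairs $(i,j)$ with $i + j - 1 = k$, it is $\tfrac{1}{k!}\sum_{i+j=k+1}\tfrac{q^{(n-i)}(t)\,q^{(n-j)}(t)}{(n-i)!\,(n-j)!}$; a change of summation index together with the Leibniz expansion of $(q^{2})^{(2n-1-k)}$ shows these agree, the boundary terms vanishing because $\deg q \leq n-1$. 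The content of the identity is that its right-hand side is manifestly a quadratic form in the Dobsch matrix $M(t)$, and that $q \mapsto c(t)$ is a linear isomorphism $\R_{n-1}[x] \to \R^{n}$.

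Next I would invoke the Hermite--Genocchi formula (see \cite{Boo}): for $g \in C^{2n-1}$ there is a nonnegative measure $\mu$, supported on $[\min_{i} x_{i}, \max_{i} x_{i}]$ and of total mass $1/(2n-1)!$, with $[x_{0}, \ldots, x_{2n-1}]_{g} = \int g^{(2n-1)}\,d\mu$. Taking $g = f q^{2}$ and substituting the identity yields
\begin{align*}
	[x_{0}, x_{1}, \ldots, x_{2n-1}]_{f q^{2}} = (2n-1)! \int c(s)^{T} M(s)\, c(s)\, d\mu(s).
\end{align*}
If $f$ is $n$-monotone then $M(s)$ is positive for every $s$ by Theorem \ref{hankel_mon}, so the integrand is nonnegative and the divided difference is $\geq 0$; this is the forward direction. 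Conversely, assume the divided-difference condition and fix $t$ and $q$. Letting all nodes $x_{i}$ tend to $t$ forces $\mu$ to concentrate at $t$, so the displayed integral converges to $c(t)^{T} M(t)\, c(t)$, which is therefore $\geq 0$; since $c(t)$ ranges over all of $\R^{n}$ as $q$ varies, $M(t)$ is positive for every $t$, and Theorem \ref{hankel_mon} gives $n$-monotonicity.

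I expect the main obstacle to be purely technical: controlling regularity. The backward implication presupposes nothing about smoothness of $f$, so the mollification step---and the verification that $n$-monotonicity survives the limit $f * \phi \to f$---must be carried out with care, as must the confluent limit of the Hermite--Genocchi measure. The identity of the second paragraph and the representation of the third are, by contrast, a finite computation and a standard formula; once they are in place the equivalence is immediate.
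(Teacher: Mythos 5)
Your route is genuinely different from the paper's. The paper proves Theorem \ref{main_monotone} self-containedly: Lemma \ref{main_identity} identifies the rational functions $N(q)(z)/\prod_{i}(z - x_{i})$ with the functions $z \mapsto \langle ((zI - B)^{-1} - (zI - A)^{-1})w, w\rangle$ attached to strict projection pairs, the pairing $\langle \cdot, \cdot \rangle_{L}$ converts this into Corollary \ref{main_corollary}, and general $A \leq B$ are handled by a rank-one chain together with induction on $n$ and Lemma \ref{span_lemma}. You instead route everything through the Dobsch--Donoghue theorem (Theorem \ref{hankel_mon}). Your two ingredients are sound: the Leibniz identity $(fq^{2})^{(2n-1)}(t)/(2n-1)! = c(t)^{T}M(t)c(t)$ with $c(t)_{i} = q^{(n-i)}(t)/(n-i)!$ checks out (it is in essence the paper's step $(\ref{cond6}) \Leftrightarrow (\ref{cond7})$ in Theorem \ref{monotone_full}), and combining it with the Peano/Hermite--Genocchi representation (the paper's Lemma \ref{peano_representation}) does give the equivalence for $C^{2n-1}$ functions, including the confluent limit used for the converse. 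What this buys is brevity; what it costs is self-containedness. The hard half of Theorem \ref{hankel_mon} (positivity of the Dobsch matrix implies monotonicity) is Donoghue's theorem, exactly the result whose difficult proof the paper's machinery is designed to bypass --- indeed the paper derives the Dobsch-matrix conditions \emph{from} Theorem \ref{main_monotone}, not the other way around. Since \ref{hankel_mon} is quoted with external references there is no formal circularity, but your proof inherits that difficulty rather than replacing it.

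There is, however, a genuine gap in your reduction to smooth $f$: the assertion that $f * \phi \to f$ locally uniformly is false unless $f$ is continuous, and neither direction of the theorem hands you continuity for free; since a divided difference is a finite combination of point values, mollification cannot recover the value of $f$ at a discontinuity, so the limiting step genuinely fails there. In the backward direction you must first extract regularity from the hypothesis itself: taking $q = 1$ shows $f$ is $(2n-1)$-tone, hence $f \in C^{2n-3}$ by Lemma \ref{k_tone_regularity} (for $n \geq 2$; the case $n = 1$ is trivial and needs no mollification), and only then does your approximation argument close. In the forward direction you need that an $n$-monotone function ($n \geq 2$) is continuous; this is true but is itself a nontrivial classical fact --- it is contained in Loewner's Theorem \ref{basic_mon}, which asserts $f \in C^{1}$ --- and your proof nowhere supplies or cites it, whereas the paper's argument never needs it (its forward direction is purely algebraic via Corollary \ref{main_corollary}). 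Both patches are available from results stated in the paper, so the gap is fixable; but the step you deferred as ``purely technical'' is precisely where the argument, as written, is incomplete.
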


For $n$-convexity one has similar characterization (see section \ref{convex_sec} for brief historical overview):

\begin{lause}\label{main_convex}
	A function $f : (a, b) \to \R$ is $n$-convex, if and only if for any $q \in \R_{n - 1}[x]$ and pairwise distinct $x_{0}, x_{1}, \ldots, x_{2 n - 1}, x_{2 n} \in (a, b)$ one has
	\begin{align*}
		[x_{0}, x_{1}, \ldots, x_{2 n - 1}, x_{2 n}]_{f q^{2}} \geq 0.
	\end{align*}
\end{lause}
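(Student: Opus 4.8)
The plan is to deduce the convex statement directly from the monotone one (Theorem \ref{main_monotone}) through the classical bridge between $n$-convexity and $n$-monotonicity. For $c \in (a,b)$ write $g_{c}(x) = \frac{f(x)-f(c)}{x-c}$ for the slope function based at $c$. Two ingredients drive the argument. The first is the Kraus characterization of $n$-convexity, which asserts positivity of the matrices $\big([c,\lambda_i,\lambda_j]_f\big)_{1\le i,j\le n}$; via the node-insertion identity $[x_0,\ldots,x_m]_{h_c} = [c,x_0,\ldots,x_m]_h$ (valid for any $h$ with $h_c(x)=\frac{h(x)-h(c)}{x-c}$, and immediate from the explicit formula for divided differences) these matrices are exactly the Loewner matrices of $g_c$, so by Loewner's Theorem \ref{basic_mon} one obtains that $f$ is $n$-convex on $(a,b)$ if and only if $g_c$ is $n$-monotone on $(a,b)$ for \emph{every} $c \in (a,b)$.

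The key computation is to convert a divided difference of $g_c q^2$ into one of $f q^2$ with $c$ adjoined as an extra node. Fix $q \in \R_{n-1}[x]$ and apply the node-insertion identity to $h = fq^2$, giving $[c,x_0,\ldots,x_{2n-1}]_{fq^2} = [x_0,\ldots,x_{2n-1}]_{(fq^2)_c}$. I would then observe that
\[
(fq^2)_c - g_c q^2 = f(c)\,\frac{q(x)^2 - q(c)^2}{x-c},
\]
a polynomial of degree at most $2n-3$; since the divided difference of order $2n-1$ annihilates every polynomial of degree at most $2n-2$, the two functions share the same order-$(2n-1)$ divided difference. This yields the bridge identity
\[
[x_0,\ldots,x_{2n-1}]_{g_c q^2} = [c,x_0,\ldots,x_{2n-1}]_{fq^2}.
\]

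With this identity the proof is a chain of equivalences. By the Kraus--Loewner bridge, $f$ is $n$-convex iff $g_c$ is $n$-monotone for every $c$; by Theorem \ref{main_monotone} applied to each $g_c$, this holds iff $[x_0,\ldots,x_{2n-1}]_{g_c q^2} \ge 0$ for all $c$, all $q \in \R_{n-1}[x]$, and all pairwise distinct $x_0,\ldots,x_{2n-1}$; and by the bridge identity this is precisely $[c,x_0,\ldots,x_{2n-1}]_{fq^2}\ge 0$. Since divided differences are symmetric in their nodes, relabelling the $2n+1$ distinct points $c,x_0,\ldots,x_{2n-1}$ as $x_0,x_1,\ldots,x_{2n}$ gives exactly the asserted inequality.

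The main obstacle I anticipate is regularity bookkeeping rather than algebra. To invoke Theorem \ref{main_monotone} for $g_c$ one needs $g_c$ to be a genuine function on all of $(a,b)$, and in the ``if'' direction one must ensure the hypothesis on $f$ forces enough smoothness for $g_c$ to extend across $c$; I would handle this as in the monotone case, extracting $C^1$-regularity from the low-order instances of the divided-difference inequality, or using the locality of $n$-monotonicity to work separately on $(a,c)$ and $(c,b)$ and then pass to the limit. A secondary point to verify is that the Kraus characterization is used in its ``for all base points $c$'' form, which is genuinely necessary --- already for $n=1$ a single base point fails to detect convexity --- and this is exactly what lets the target condition range freely over all $2n+1$ nodes.
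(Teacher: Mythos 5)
Your algebra is correct and half of your argument genuinely works. The node-insertion identity, the observation that $(fq^{2})_{c} - g_{c}q^{2} = f(c)\,\bigl(q(x)^{2}-q(c)^{2}\bigr)/(x-c)$ is a polynomial of degree at most $2n-3$, and hence the bridge identity $[x_{0},\ldots,x_{2n-1}]_{g_{c}q^{2}} = [c,x_{0},\ldots,x_{2n-1}]_{fq^{2}}$ are all valid. The backward direction goes through on this basis: the hypothesis with $q=1$ makes $f$ $(2n)$-tone, hence $f \in C^{2n-2} \subset C^{2}$ for $n \geq 2$, so each $g_{c}$ is well defined and continuous; the bridge identity (plus continuity at tuples containing $c$) verifies the hypothesis of Theorem \ref{main_monotone} for $g_{c}$, so each $g_{c}$ is $n$-monotone; the Loewner matrices of $g_{x_{l}}$ at tuples containing $x_{l}$ are exactly the Kraus matrices of Theorem \ref{basic_con}, which then yields $n$-convexity.

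The gap is in the forward direction, and it is not the ``secondary point'' you describe but the crux of the whole theorem. Kraus's characterization --- as Kraus proved it and as it is stated in Theorem \ref{basic_con} --- requires the base point $\lambda_{0}$ to be \emph{one of the nodes} $\lambda_{1},\ldots,\lambda_{n}$; this is forced by the nature of its proof, since all points involved must be eigenvalues of a single $n\times n$ matrix and there are only $n$ eigenvalues available. Your bridge needs the strictly stronger statement that $n$-convexity implies positivity of $([c,\lambda_{i},\lambda_{j}]_{f})_{i,j}$ for an \emph{arbitrary} base point $c$, equivalently that $g_{c}$ is $n$-monotone for every $c$; that is condition (\ref{cond7_c}) of Theorem \ref{convex_full}, i.e.\ a conclusion of the theorem you are proving, not an available input. (The distinction is not ``all $c$ versus one $c$,'' as your closing remark suggests, but ``$c$ among the nodes versus $c$ free.'') From the base-among-nodes version one gets directly, by deleting the row and column of the base node, only that $g_{c}$ is $(n-1)$-monotone for every $c$; removing this loss of one dimension is exactly the hard content, and classical dilation-type arguments do no better (they give, e.g., $n$-monotonicity of $g_{c}$ only from $2n$-convexity of $f$). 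The paper circumvents this by never leaving the base-among-nodes world: Kraus's condition (\ref{cond6_c}) is converted algebraically into the divided-difference condition (\ref{cond4_c}), in which one node appears three times, and then a mean-value collapsing argument (the analogue of (\ref{cond4}) $\Rightarrow$ (\ref{cond3}) in Theorem \ref{monotone_full}, collapsing one triple and $n-1$ pairs) reaches arbitrary pairwise distinct $x_{0},\ldots,x_{2n}$. As it stands your first step is therefore circular; repairing it means replacing the ``Kraus--Loewner bridge'' by such a collapsing argument, at which point you have reproduced the paper's proof.
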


From these two results together with the basic properties of divided differences (namely Lemma \ref{k_tone_local}) it follows immediately that for any $n$ both $n$-monotonicity and $n$-convexity are local properties.

\begin{kor}\label{local_property}
	Let $a < c < b < d$, $n \geq 1$, and $f : (a, d) \to \R$ be such that both $\restr{f}{(a, b)}$ and $\restr{f}{(c, d)}$ are $n$-monotone (resp. $n$-convex). Then also $f$ is $n$-monotone (resp. $n$-convex).
\end{kor}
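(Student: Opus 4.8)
The plan is to transport the problem entirely to the level of divided differences by means of Theorems \ref{main_monotone} and \ref{main_convex}, and then to invoke the locality of the sign of divided differences of a single fixed order, which is the elementary content of Lemma \ref{k_tone_local}. I will carry out the monotone case in detail; the convex case is verbatim the same, with $2n$ in place of $2n-1$ points and Theorem \ref{main_convex} in place of Theorem \ref{main_monotone}.

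Fix a polynomial $q \in \R_{n-1}[x]$ and set $g = f q^{2} : (a, d) \to \R$. The first step is to observe that, for this fixed $q$, the hypotheses become a statement about the \emph{single} function $g$. Indeed, $g|_{(a,b)} = (f|_{(a,b)}) q^{2}$, and since $f|_{(a,b)}$ is $n$-monotone, Theorem \ref{main_monotone} applied on the interval $(a,b)$ gives $[x_{0}, \ldots, x_{2n-1}]_{g} \geq 0$ for all pairwise distinct $x_{0}, \ldots, x_{2n-1} \in (a,b)$; the identical reasoning on $(c,d)$ yields the analogous inequalities for tuples in $(c,d)$. Thus $g$ has nonnegative divided differences of order $2n-1$ on each of the two overlapping intervals.

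The second step is to glue. Because $a < c < b < d$, the overlap $(c,b)$ is nonempty, so Lemma \ref{k_tone_local} — the assertion that nonnegativity of divided differences of a fixed order passes from two overlapping intervals to their union — applies to $g$ and produces $[x_{0}, \ldots, x_{2n-1}]_{g} \geq 0$ for all pairwise distinct $x_{0}, \ldots, x_{2n-1} \in (a,d)$. Since $q \in \R_{n-1}[x]$ was arbitrary, Theorem \ref{main_monotone} applied on $(a,d)$ now shows that $f$ is $n$-monotone, completing the argument.

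The only point demanding care — and the reason the overlap hypothesis $c < b$ is indispensable — is the order of quantifiers: one must fix $q$ \emph{before} gluing, so that locality is applied to one honest function $g = f q^{2}$ at a time, and only afterwards re-quantify over $q$ via the theorem on the large interval. Consequently all the genuine difficulty sits inside Lemma \ref{k_tone_local}, which I am assuming; once that divided-difference fact is available the corollary is immediate. For intuition, locality of the sign of order-$(2n-1)$ divided differences is the higher-order analogue of the trivial observation that a function increasing on two overlapping intervals is increasing on the union, proved by using a point of the overlap as a bridge, and the same bridging idea, suitably iterated, handles the general order.
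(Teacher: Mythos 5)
Your proof is correct and follows exactly the paper's own route: the paper's proof of this corollary (see Corollary \ref{mon_loc}) is precisely ``apply Theorem \ref{main_monotone} (resp.\ Theorem \ref{main_convex}) on each subinterval, glue via Lemma \ref{k_tone_local}, then apply the theorem again on $(a,d)$,'' and your write-up simply makes explicit the fix-$q$-first quantifier order that the paper leaves implicit. The only nitpick is terminological: in the convex case the divided differences involve $2n+1$ points (order $2n$), not ``$2n$ points,'' but this does not affect the argument.
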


We say that a function is $k$-tone, if all its divided differences of order $k$ are non-negative.

\begin{kor}\label{regularity}
	Let $n \geq 2$, and $f$ be $n$-monotone ($n$-convex). Then $f$ is $(2 n - 1)$-tone ($(2 n)$-tone) so in particular in $C^{2 n - 3}$ ($C^{2 n - 2}$).
\end{kor}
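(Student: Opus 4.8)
The plan is to extract the two tonicity statements directly from the characterizations already in hand, and then to hand off the differentiability claim to the classical regularity theory of higher convexity. First I would note that, since $n \geq 2$ (indeed for any $n \geq 1$), the constant polynomial $q \equiv 1$ lies in $\R_{n-1}[x]$ and satisfies $q^2 \equiv 1$, so that $f q^2 = f$. Feeding this $q$ into Theorem \ref{main_monotone} immediately gives
\begin{align*}
	[x_0, x_1, \ldots, x_{2n-1}]_f \geq 0
\end{align*}
for all pairwise distinct $x_0, \ldots, x_{2n-1} \in (a,b)$; by the very definition of the term this says that $f$ is $(2n-1)$-tone. The convex case is verbatim the same: for $n$-convex $f$, Theorem \ref{main_convex} with $q \equiv 1$ yields $[x_0, \ldots, x_{2n}]_f \geq 0$ for pairwise distinct nodes, i.e.\ $f$ is $(2n)$-tone.

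It then remains only to convert tonicity into smoothness, and for this I would invoke the standard fact that a $k$-tone function on an open interval is automatically of class $C^{k-2}$. Concretely, non-negativity of all order-$k$ divided differences forces the existence and continuity of $f^{(k-2)}$ (with $f^{(k-1)}$ monotone off an at most countable set); this is exactly the regularity half of Popoviciu's higher-convexity theory, and it is consistent with the two familiar endpoints $k=2$ (convex $\Rightarrow$ continuous) and $k=1$ (monotone, no continuity guaranteed). Taking $k = 2n-1$ in the monotone case gives $f \in C^{(2n-1)-2} = C^{2n-3}$, and $k = 2n$ in the convex case gives $f \in C^{2n-2}$; the hypothesis $n \geq 2$ is precisely what makes these positive-order regularity statements.

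I do not expect a genuine obstacle here, since the entire substance of the corollary is carried by Theorems \ref{main_monotone} and \ref{main_convex}: once the right test polynomial $q \equiv 1$ is chosen, the inequalities are literally the defining inequalities of $(2n-1)$- and $(2n)$-tonicity. The only point to keep straight is that, per the conventions recalled before the theorems, divided differences of a not-necessarily-smooth $f$ are defined only at pairwise distinct nodes, so no bootstrapping across coincident nodes is required; and the $C^{k-2}$ implication, while the one nontrivial input, is classical and can simply be quoted (cf.\ the basic properties of divided differences in Lemma \ref{k_tone_local} and \cite{Boo}).
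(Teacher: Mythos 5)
Your proposal is correct and matches the paper's argument exactly: the paper obtains Corollary \ref{regularity} by taking $q \equiv 1$ in Theorems \ref{main_monotone} and \ref{main_convex} to get $(2n-1)$- resp.\ $(2n)$-tonicity, and then invokes the classical regularity of $k$-tone functions (stated as Lemma \ref{k_tone_regularity}, cited to \cite{Bullen}; see also Corollary \ref{mon_reg}). The only slip is bibliographic: the regularity input you quote is Lemma \ref{k_tone_regularity}, not Lemma \ref{k_tone_local}, which concerns the local-property statement instead.
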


These results also explain the appearance of the integral representations of \cite{Heina}; these turn out to be relics of the Peano representation of divided differences (see section \ref{integral_repr_section} for details).

In addition to these characterizations, the ideas in the proofs are used to extend the characterization for $n$-monotonicity on general subsets of $\R$. If $F$ is any subset of $\R$, $f : F \to \R$ is $n$-monotone if for any $n \times n$ Hermitian matrices $A \leq B$ with $\spec(A), \spec(B) \subset F$, one has $f(A) \leq f(B)$.

\begin{lause}\label{general_monotone}
	Let $F \subset \R$ and $n \geq 1$. Then $f : F \to \R$ is $n$-monotone, if and only if
	\begin{align*}
		[x_{0}, x_{1}, \ldots, x_{2 k - 1}]_{f q^{2}} \geq 0
	\end{align*}
	for any pairwise distinct $x_{0}, x_{1}, \ldots, x_{2 k - 1} \in F$ and $q \in \C_{k - 1}[x]$; for any $1 \leq k \leq n$.

	Moreover, if $\#F > 2 n$, it suffices to verify the case $k = n$.
\end{lause}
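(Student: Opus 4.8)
The plan is to route both implications through a single exact \emph{rank-one identity} that rewrites a monotonicity increment as one of the divided differences in the statement, and then to handle the bookkeeping around it. Before anything else I would reduce to finite $F$: an instance of the defining inequality $f(A) \leq f(B)$ depends on $f$ only through its values on $\spec(A) \cup \spec(B)$, a set of at most $2n$ points, and dually each inequality $[x_0, \ldots, x_{2k-1}]_{f q \overline{q}} \geq 0$ depends only on $2k \leq 2n$ points. Hence both sides of the asserted equivalence hold on $F$ if and only if they hold on every finite subset, so I may assume $F$ is finite.

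The engine is as follows. Take $A = \operatorname{diag}(\lambda_1, \ldots, \lambda_k)$ with distinct $\lambda_i \in \R$, a vector $v \in \C^k$, and $B = A + vv^{*} \geq A$, whose eigenvalues $\mu_1, \ldots, \mu_k$ strictly interlace the $\lambda_i$. Writing $f(B) - f(A)$ as a Cauchy integral, applying the Sherman--Morrison (Krein) resolvent formula to $(z-B)^{-1} - (z-A)^{-1}$, and evaluating by residues at the $2k$ points $\{\lambda_i\} \cup \{\mu_j\}$, I would obtain, for every $w \in \C^k$,
\begin{align*}
	\langle w, (f(B) - f(A)) w \rangle = [\lambda_1, \mu_1, \ldots, \lambda_k, \mu_k]_{f q \overline{q}}, \qquad q(z) = \sum_{i} \overline{v_i}\, w_i \prod_{l \neq i}(z - \lambda_l) \in \C_{k-1}[x].
\end{align*}
Here $q\overline{q} = |q|^2$ on $\R$ is precisely the $q^2$ of the statement read in the complex setting, and the decisive point is that $q$ is a \emph{genuinely complex} polynomial because $v$ and $w$ are complex vectors; this is exactly why $\C_{k-1}[x]$ (and not $\R_{k-1}[x]$) is forced on general sets. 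The identity is linear in $f$, so I would verify it on rational $f$ with poles off $F$ (where the analytic computation is literal) and extend by linearity to all $f$.

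For necessity, given $2k$ distinct points of $F$ and a target $q \in \C_{k-1}[x]$, I would sort the points into strictly interlacing families $\lambda_i, \mu_j$, solve the rank-one inverse eigenvalue problem (the $|v_i|^2$ are determined and positive by interlacing, while the phases of $v$ and the entries of $w$ are free, so the Lagrange expansion above realizes \emph{any} prescribed $q$), and pad $A, B$ with frozen eigenvalues in $F$ to reach size $n$. Since $B = A + vv^{*} \geq A$ with both spectra in $F$, $n$-monotonicity gives $\langle w, (f(B) - f(A)) w \rangle \geq 0$, hence the divided difference is $\geq 0$; letting only $k$ eigenvalues move yields every order $1 \leq k \leq n$. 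Conversely, the same identity shows that the hypotheses force $f(A) \leq f(B)$ \emph{whenever} $\rank(B-A) = 1$ and $\spec(A), \spec(B) \subset F$. The main obstacle is the sufficiency reduction from a general pair $A \leq B$ to such rank-one steps \emph{without leaving the possibly sparse set $F$}: I would induct on $\rank(B-A)$, at each stage producing $C$ with $A \leq C \leq B$, $\spec(C) \subset F$, $\rank(C-A) = 1$, and $\rank(B-C) < \rank(B-A)$, by a rank-one bump in an eigendirection tuned via interlacing so that the moved eigenvalue lands on a point of $F$; equivalently one must show the general monotonicity inequality is a nonnegative combination of the rank-one ones (a Farkas-type decomposition). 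As a fallback I would instead use the hypotheses to construct an $n$-monotone extension $\tilde f$ to an interval and invoke Theorem \ref{main_monotone}.

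Finally, for the \emph{moreover} clause: when $\#F > 2n$ there are at least two spare points, so any configuration of order $k < n$ can be promoted to order $n$ by adjoining $n-k$ auxiliary pairs of points and the corresponding factors to $q$, and the desired lower-order inequality is then recovered by letting the auxiliary points coalesce. This confluence is exactly the divided-difference behavior recorded in Lemma \ref{k_tone_local}, so the single top order $k = n$ suffices once $F$ is large enough.
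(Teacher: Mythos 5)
Your rank-one identity and the necessity direction are essentially sound and match the paper's route (Lemma \ref{main_identity} plus Corollary \ref{main_corollary}: interlacing inverse eigenvalue problem, Lagrange realization of an arbitrary $q \in \C_{k-1}[x]$, and padding with frozen eigenvalues in $F$ to reach size $n$). The genuine gap is in sufficiency. Your primary plan --- induct on $\rank(B-A)$ by producing $C$ with $A \leq C \leq B$, $\rank(C-A)=1$, $\rank(B-C)<\rank(B-A)$ and $\spec(C)\subset F$ --- fails: the intermediate spectrum cannot in general be kept inside $F$. Concretely, for $n=2$ take $F=\{0,1,2,3\}$, $A=\operatorname{diag}(0,1)$ and $B=R\operatorname{diag}(2,3)R^{T}$ for a generic rotation $R$ (then $B-A\geq 2I-A>0$ has rank $2$). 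Interlacing forces $\spec(C)$ to be one of $\{0,2\},\{0,3\},\{1,2\},\{1,3\}$, and writing $C=A+s\,vv^{*}$, the trace determines $s$ while $\det C=s|v_{1}|^{2}$ forces $|v_{1}|\in\{0,1\}$ in every case, so each admissible spectrum pins down a \emph{unique diagonal} candidate $C$; the remaining requirement $\det(B-C)=0$ is a codimension-one condition on $R$ that fails generically, so no such $C$ exists. This is exactly the obstacle the paper flags explicitly before its proof. Your fallback --- extend $f$ to an $n$-monotone function on an interval and invoke Theorem \ref{main_monotone} --- is refuted by the paper itself: Theorem \ref{interpolation_failure} exhibits an $n$-monotone $f$ on a $(2n+2)$-point set that admits no $n$-monotone extension past even a single additional point of the convex hull.

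What actually closes sufficiency is the ``Farkas-type decomposition'' you name but do not carry out. The paper observes (equation (\ref{poly_decomp})) that for any $A\leq B$ with spectra in $F$ and any $w$, the function $\langle((zI-B)^{-1}-(zI-A)^{-1})w,w\rangle = q(z)/\prod_{i=1}^{k}(z-x_{i})$ has numerator $q\in\R_{k-1}[x]$ with sign behaviour outside $[x_{1},x_{k}]$ controlled by operator monotonicity of inversion ($0\leq A\leq B \Rightarrow B^{-1}\leq A^{-1}$, applied to the resolvents on either side of the spectra); Lemma \ref{polynomial_ab_lemma} then writes such $q$ as a positive combination of $N(\tilde q)$, $(\cdot-x_{1})N(\tilde q)$, $(\cdot-x_{k})N(\tilde q)$ and $(\cdot-x_{1})(\cdot-x_{k})N(\tilde q)$, turning the increment into a positive combination of admissible divided differences of orders $2l-1$ with $l\leq n$. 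Note this is also why the hypothesis quantifies over \emph{all} $1\leq k\leq n$: a general pair $A\leq B$ decomposes into lower-order terms, while your engine only produces the top order from strict rank-one pairs. Finally, your ``moreover'' argument is flawed as stated: on a general (e.g.\ finite) $F$ you cannot ``let auxiliary points coalesce'' --- there is no limit to take inside $F$ and no continuity of $f$ available --- and Lemma \ref{k_tone_local} is a locality statement, not a confluence result. The correct mechanism is the exact convex-combination identity of Lemma \ref{span_lemma}, which consumes three extra distinct points of $F$ per step and hence yields the threshold $\#F>2n$ with no limiting process.
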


Previously $n$-monotonicity on general sets has been studied only in the case $n = \infty$ (i.e. functions which are $n$-monotone for every $n \geq 1$). Based on the work of {\v{S}}mul'jan \cite{Smulj}, Chandler \cite{Chandler} proved a striking result: if $f$ is $\infty$-monotone on an open set $U$, then it extends as a $\infty$-monotone to the convex hull of $U$. This result was further generalized to general subsets of $\R$ by Donoghue \cite{Don_gen}. We prove in Theorem \ref{interpolation_failure} that such interpolation cannot be done in general for fixed $n$.

See also \cite{Rosen} and \cite{Don_gen2} for different notion of $\infty$-monotonicity for open subsets of $\R$.

This article is largerly based on the author's master's thesis.

\section{Preliminaries}

Divided differences have many useful properties that make them a convenient computational device for tackling matrix functions.

\begin{lem}\label{divided_basic}
	\begin{enumerate}[(i)]
	\item (Frobenius representation for divided differences) For analytic $f$, and suitable\footnote{For our purposes, it is enough to consider entire $f$ and $\gamma$ a circle enclosing the points $z_{1}, z_{2}, \ldots, z_{n}$.} $\gamma$ one has
	\begin{align*}
		[z_{0}, z_{1}, z_{2}, \ldots, z_{n}]_{f} = \frac{1}{2 \pi i} \int_{\gamma} \frac{f(z)}{(z - z_{0}) (z - z_{1})\cdots (z - z_{n})}dz.
	\end{align*}
	\item \label{cond2_divided} (Continuity and mean value theorem) If $f \in C^{n}(a, b)$, then the order $n$ divided differences of $f$ extend continuously to $(a, b)^{n + 1}$ and the extension satisfies a mean value theorem: for any tuple of (not necessarily distinct) real numbers $(x_{i})_{i = 0}^{n} \in (a, b)^{n + 1}$ one has
	\begin{align*}
		[x_{0}, x_{1}, \ldots, x_{n}]_{f} = \frac{f^{(n)}(\xi)}{n!}
	\end{align*}
	for some $\xi \in [\min(x_{i}), \max(x_{i})]$.
	\end{enumerate}
\end{lem}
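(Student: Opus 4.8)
The two parts are essentially independent, so I would treat them separately, handling (i) by residue calculus and (ii) by way of the Hermite--Genocchi integral representation.

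For part (i), the plan is to start from the closed-form expression recorded just above the statement,
\[
	[z_{0}, z_{1}, \ldots, z_{n}]_{f} = \sum_{i=0}^{n} \frac{f(z_{i})}{\prod_{j \neq i}(z_{i} - z_{j})},
\]
and to recognize its right-hand side as a sum of residues. Setting $\omega(z) = \prod_{k=0}^{n}(z - z_{k})$, the function $z \mapsto f(z)/\omega(z)$ is meromorphic inside $\gamma$ with a simple pole at each $z_{i}$ (when the $z_{i}$ are pairwise distinct), and $\Res_{z = z_{i}} f(z)/\omega(z) = f(z_{i})/\prod_{j \neq i}(z_{i} - z_{j})$ because $\omega'(z_{i}) = \prod_{j \neq i}(z_{i} - z_{j})$. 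Since $\gamma$ encloses all the $z_{i}$ and $f$ is entire, the residue theorem gives $\frac{1}{2 \pi i}\int_{\gamma} f/\omega = \sum_{i} \Res_{z_{i}}$, which is exactly the sum above. This settles the distinct case; the coincident case follows because both sides are holomorphic in the parameters $(z_{0}, \ldots, z_{n})$ as long as they stay inside $\gamma$ (the contour integral manifestly so), and they agree on the open dense set where the points are distinct.

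For part (ii), the key tool is the Hermite--Genocchi representation: for $f \in C^{n}(a,b)$ and $x_{0}, \ldots, x_{n} \in (a,b)$,
\[
	[x_{0}, x_{1}, \ldots, x_{n}]_{f} = \int_{\Sigma_{n}} f^{(n)}\!\left(\textstyle\sum_{i=0}^{n} t_{i} x_{i}\right) dt_{1} \cdots dt_{n},
\]
where $\Sigma_{n} = \{(t_{1}, \ldots, t_{n}) : t_{i} \geq 0,\ \sum_{i \geq 1} t_{i} \leq 1\}$ is the standard simplex and $t_{0} = 1 - \sum_{i \geq 1} t_{i}$. I would prove this by induction on $n$: the case $n = 1$ is the fundamental theorem of calculus after the substitution $u = (1-t)x_{0} + t x_{1}$, and for the inductive step one integrates out the innermost variable $t_{n}$ by the fundamental theorem of calculus and Fubini, whereupon the resulting difference of two lower-order simplex integrals reassembles, via the inductive hypothesis, into the divided differences $[x_{0}, \ldots, x_{n-1}]_{f}$ and $[x_{1}, \ldots, x_{n}]_{f}$ appearing (divided by $x_{0} - x_{n}$) in the recursive definition. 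Granting this formula, both assertions drop out at once. Continuity: the integrand depends continuously on $(x_{0}, \ldots, x_{n})$, uniformly for arguments in any compact subinterval since $f^{(n)}$ is continuous, so dominated convergence makes $(x_{0}, \ldots, x_{n}) \mapsto [x_{0}, \ldots, x_{n}]_{f}$ continuous on all of $(a,b)^{n+1}$ with no distinctness assumed, giving the continuous extension. Mean value theorem: the argument $\sum_{i} t_{i} x_{i}$ is a convex combination of the $x_{i}$, hence lies in $[\min_{i} x_{i}, \max_{i} x_{i}]$, and $\Sigma_{n}$ has Lebesgue measure $1/n!$; thus the integral is $\frac{1}{n!}$ times the average of $f^{(n)}$ over $\Sigma_{n}$, and since $f^{(n)}$ is continuous and the convex combination sweeps out the connected interval $[\min x_{i}, \max x_{i}]$, the integral mean value theorem produces $\xi$ there with $[x_{0}, \ldots, x_{n}]_{f} = f^{(n)}(\xi)/n!$.

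\textbf{Main obstacle.} The only genuine work is the inductive verification of Hermite--Genocchi, where the change of variables and the splitting of the simplex integral into its two lower-order pieces (with the correct sign and denominator $x_{0} - x_{n}$) must be tracked carefully; everything else is a direct consequence. As an alternative route to the mean value theorem for \emph{distinct} points one could bypass the integral formula: the leading coefficient of the interpolating polynomial $p$ of degree at most $n$ agreeing with $f$ at $x_{0}, \ldots, x_{n}$ equals $[x_{0}, \ldots, x_{n}]_{f}$, and applying Rolle's theorem $n$ times to $f - p$ (which has $n+1$ zeros) yields $\xi$ with $f^{(n)}(\xi) = p^{(n)} = n!\,[x_{0}, \ldots, x_{n}]_{f}$. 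This argument, however, does not by itself deliver the continuous extension to coincident tuples, which is why I prefer Hermite--Genocchi as the main device.
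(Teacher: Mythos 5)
Your proposal is correct, but note that the paper itself does not prove this lemma: it is imported from the literature with the pointer ``See for instance \cite{Boo}'', so there is no in-paper argument to compare against. The two devices you choose --- the residue theorem for (i) and the Hermite--Genocchi simplex representation for (ii) --- are precisely the standard proofs, and the Genocchi--Hermite formula is in fact the tool used in the cited reference, so your route matches what the paper implicitly relies on rather than diverging from it.

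Two bookkeeping points are worth making explicit if you write the argument out in full. In (i), for coincident nodes the left-hand side must first be \emph{given} a meaning before the identity can be asserted; the cleanest reading of your density argument is that the contour integral, which is visibly holomorphic in $(z_{0}, \ldots, z_{n})$ by differentiation under the integral sign, \emph{is} the extension, and that it is consistent with the paper's convention \eqref{formal_shift_der} (equivalently, for real nodes, with the continuous extension from part (ii)) because the two agree on the dense open set of pairwise distinct tuples --- your sentence silently presupposes some such definition at repeated nodes. In (ii), the inductive step of Hermite--Genocchi integrates out $t_{n}$ and divides by $x_{n} - x_{0}$, so it literally requires $x_{0} \neq x_{n}$; since at that stage the divided difference is only defined for pairwise distinct nodes anyway, you should either invoke the symmetry of both sides of the formula to reduce to the case $x_{0} \neq x_{n}$, or (as is standard) prove the formula for distinct nodes only and then let the integral itself furnish the continuous extension, exactly as your dominated-convergence paragraph does. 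Neither point is a substantive gap; with these remarks your plan goes through, and the mean-value step (simplex of measure $1/n!$, argument confined to $[\min_{i} x_{i}, \max_{i} x_{i}]$, intermediate value theorem for the continuous $f^{(n)}$) is complete as stated.
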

\begin{proof}
	See for instance \cite{Boo}.
\end{proof}

Recall that functions for which all the divided differences of order $k$ are non-negative are called $k$-tone. If $f \in C^{k}$, then, by Lemma \ref{divided_basic} (ii), $f$ is $k$-tone, if and only if its $k$'th derivative is non-negative. Additionally, $k$-tone functions have the following basic properties.

\begin{lem}\label{k_tone_regularity}
	Let $k \geq 2$. Then $f$ is $k$-tone, if and only if $f \in C^{k - 2}$ and $f^{(k - 2)}$ is convex.
\end{lem}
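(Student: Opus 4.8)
The smooth case of this lemma is immediate and will drive everything. If $f \in C^{k}$, then by Lemma \ref{divided_basic} (ii) one has $[x_0, \ldots, x_k]_f = f^{(k)}(\xi)/k!$, so $f$ is $k$-tone if and only if $f^{(k)} \geq 0$, which is exactly the statement that $f^{(k-2)}$ is convex (its second derivative is $f^{(k)}$). Thus the entire difficulty of the lemma lies in removing the smoothness assumption, i.e. in the regularity of a genuinely non-smooth $k$-tone function. As a sanity check, the extreme case $k = 2$ needs no smoothness at all: for ordered nodes $[x_0, x_1, x_2]_f \geq 0$ says precisely that the difference quotients of $f$ are nondecreasing, hence that $f$ is convex, and convex functions on an open interval are automatically continuous, giving $f \in C^{0} = C^{k-2}$ with $f^{(0)} = f$ convex.

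To pass from the smooth case to the general one I would mollify. Fix a compact subinterval and a nonnegative mollifier $\rho_\epsilon$. Since divided differences are linear functionals and commute with translation, one has $[x_0, \ldots, x_k]_{f * \rho_\epsilon} = \int [x_0 - s, \ldots, x_k - s]_f\, \rho_\epsilon(s)\, ds$, so convolution preserves $k$-tonicity (for $\epsilon$ small the translated nodes stay in the interval, and locality of the conclusion makes this harmless). The converse direction of the lemma is then clean: if $f \in C^{k-2}$ with $f^{(k-2)}$ convex, then $(f * \rho_\epsilon)^{(k-2)} = f^{(k-2)} * \rho_\epsilon$ is smooth and convex, so $(f * \rho_\epsilon)^{(k)} \geq 0$ and $f * \rho_\epsilon$ is $k$-tone by the smooth case; since $[x_0, \ldots, x_k]_{(\cdot)}$ is a fixed linear combination of the nodal values, it is continuous under the locally uniform convergence $f * \rho_\epsilon \to f$, and nonnegativity descends to $f$.

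The forward direction carries the real content, and I would again work with the mollifications $f_\epsilon = f * \rho_\epsilon$, which are smooth and $k$-tone, so that $f_\epsilon^{(k-2)}$ is convex. To conclude that $f$ itself lies in $C^{k-2}$ with convex $f^{(k-2)}$, it suffices to show that $f_\epsilon^{(j)}$ converges locally uniformly for $0 \leq j \leq k-2$ as $\epsilon \to 0$. The mechanism is to bound the low-order derivatives of $f_\epsilon$ by divided differences of $f$ that do not feel the mollification: writing $f_\epsilon^{(j)}(x)/j! = [x, \ldots, x]_{f_\epsilon}$ as a confluent order-$j$ divided difference, and using that for a $k$-tone function the divided differences of order $j \leq k-1$ admit two-sided bounds in terms of order-$(k-1)$ divided differences taken at spread-out nodes, one bounds $|f_\epsilon^{(j)}|$ on the compact subinterval by $\sup|f|$ on a slightly larger one, uniformly in $\epsilon$. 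These bounds rest on the monotonicity reformulation coming from the defining recursion, namely that $k$-tonicity of $f$ is equivalent to each map $t \mapsto [t, y_1, \ldots, y_{k-1}]_f$ being nondecreasing, so that such divided differences are controlled by their values at the endpoints of the range of $t$.

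With uniform local bounds in hand, Arzel\`a--Ascoli (applied level by level, using one extra order of uniform control for equicontinuity) extracts locally uniform limits of $f_\epsilon, f_\epsilon', \ldots, f_\epsilon^{(k-2)}$; these limits must be $f, f', \ldots, f^{(k-2)}$, so $f \in C^{k-2}$, and $f^{(k-2)} = \lim f_\epsilon^{(k-2)}$ is convex as a locally uniform limit of convex functions. The main obstacle is exactly the uniform estimate of the previous paragraph: one must bound all derivatives up to order $k-2$ (and no further, which is precisely why the top two derivatives are genuinely lost and only $C^{k-2}$ survives) purely from the $k$-tonicity of $f$, without circularly invoking the regularity one is trying to prove. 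Everything else reduces either to Lemma \ref{divided_basic} (ii) in the smooth case or to the stability of divided differences at distinct nodes under locally uniform convergence.
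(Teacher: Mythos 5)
The paper proves this lemma by citation (to \cite{Bullen}), so there is no in-paper argument to mirror; your mollification strategy is a reasonable route, but as written it has a genuine gap at its foundation. To even define $f * \rho_\epsilon$ you need $f$ to be measurable and locally integrable, and, more seriously, in the forward direction you need $f_\epsilon \to f$ pointwise (you in fact invoke locally uniform convergence, and your uniform derivative bounds invoke $\sup|f|$) in order to identify the Arzel\`a--Ascoli limits with $f, f', \ldots, f^{(k-2)}$. But continuity and local boundedness of $f$ are precisely part of the conclusion: a priori a $k$-tone function could be unbounded or non-measurable for all you have shown, and even granting integrability, $f_\epsilon \to f$ is guaranteed only at continuity points. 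So your argument, as stated, yields only that $f$ agrees \emph{almost everywhere} with a $C^{k-2}$ function whose $(k-2)$nd derivative is convex --- strictly weaker than the statement, since it does not by itself rule out a $k$-tone function differing from its regularization on a null set. (Note that $k$-tonicity does not reduce to the case $k=2$: e.g.\ $f(x) = -x^2$ is $3$-tone but not convex, so continuity cannot be borrowed from the convex case.)

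The gap is closable with the tool you already state but deploy too late. The equivalence of $k$-tonicity with the monotonicity of $t \mapsto [t, y_1, \ldots, y_{k-1}]_f$ for fixed distinct $y_1, \ldots, y_{k-1}$ gives, first, that $f(t) = \bigl(\prod_{i} (t - y_i)\bigr) [t, y_1, \ldots, y_{k-1}]_f + P(t)$ for a polynomial $P$ depending only on the $f(y_i)$'s, so $f$ is locally bounded and measurable with one-sided limits everywhere; second, continuity: if $f(t_0^{+}) - f(t_0^{-}) = \delta$ at some $t_0$ avoiding the nodes, the nondecreasing map $t \mapsto [t, y_1, \ldots, y_{k-1}]_f$ has jump $\delta / \prod_i (t_0 - y_i) \geq 0$ at $t_0$, and moving a single node $y_i$ across $t_0$ flips the sign of the product, forcing $\delta = 0$; the same two-sided comparison of $[t_0, y_1, \ldots, y_{k-1}]_f$ with its one-sided limits pins $f(t_0)$ to the common limit. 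With continuity (hence local integrability and $f_\epsilon \to f$ locally uniformly) established \emph{before} mollifying, the rest of your plan is sound: convolution preserves $k$-tonicity by your integral identity, the smooth case follows from Lemma \ref{divided_basic} (ii), the spread-node monotonicity argument bounds $f_\epsilon^{(k-1)}$ (and then all lower derivatives) uniformly on interior compacts in terms of $\sup|f|$, Arzel\`a--Ascoli applies, and your converse direction was already complete since there $f$ is assumed to lie in $C^{k-2}$. In short: right mechanism, one missing lemma --- the continuity of $k$-tone functions --- which must come first, not emerge as a byproduct.
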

\begin{proof}
	See for instance \cite{Bullen}.
\end{proof}

\begin{lem}\label{k_tone_local}
	Let $a < c < b < d$ and $f : (a, d) \to \R$ is such that both $\restr{f}{(a, b)}$ and $\restr{f}{(c, d)}$ are $k$-tone, then so is $f$.
\end{lem}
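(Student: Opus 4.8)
The plan is to reduce the statement, via Lemma \ref{k_tone_regularity}, to the special case $k = 2$, i.e.\ to the fact that \emph{convexity is a local property}, and then to establish that special case by a maximum-principle argument. The case $k = 1$ I would dispose of at once: a $1$-tone function is exactly a non-decreasing one, and a function non-decreasing on $(a, b)$ and on $(c, d)$ is non-decreasing on $(a, d)$ because the intervals overlap in $(c, b) \neq \emptyset$, so any pair of points can be compared through a common point $w \in (c, b)$ of the overlap.

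For $k \geq 2$ I would argue as follows. By Lemma \ref{k_tone_regularity} applied to each piece, $\restr{f}{(a, b)}$ being $k$-tone means $f \in C^{k - 2}(a, b)$ with $f^{(k - 2)}$ convex on $(a, b)$, and likewise $f \in C^{k - 2}(c, d)$ with $f^{(k - 2)}$ convex on $(c, d)$. Since being $(k - 2)$-times continuously differentiable is determined locally, $f \in C^{k - 2}(a, d)$, and $g := f^{(k - 2)}$ is a well-defined continuous function on $(a, d)$ that is convex on $(a, b)$ and on $(c, d)$. Applying Lemma \ref{k_tone_regularity} in the reverse direction, it then suffices to prove that $g$ is convex on all of $(a, d)$, so the whole problem is reduced to the locality of convexity.

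The heart of the argument --- and the step I expect to require the most care --- is this last claim: a continuous $g$ on $(a, d)$ that is convex on $(a, b)$ and on $(c, d)$ is convex on $(a, d)$. I would prove it by contradiction. If $g$ failed to be convex, there would be $p < q$ in $(a, d)$ for which the affine function $\ell$ with $\ell(p) = g(p)$ and $\ell(q) = g(q)$ satisfies $g(t_{*}) > \ell(t_{*})$ for some $t_{*} \in (p, q)$. Set $\phi = g - \ell$; then $\phi$ is continuous with $\phi(p) = \phi(q) = 0$, inherits convexity on $(a, b)$ and on $(c, d)$ (subtracting an affine map preserves convexity), and attains a value $M := \max_{[p, q]} \phi > 0$ at some interior point. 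The key observation is that the set $\{t \in (p, q) : \phi(t) = M\}$ is open: at any such $t_{1}$, which lies in $(a, b)$ or in $(c, d)$ and is therefore interior to an interval on which $\phi$ is convex, the inequality $\phi(t_{1}) \leq \tfrac{1}{2}\big(\phi(t_{1} - \varepsilon) + \phi(t_{1} + \varepsilon)\big)$ together with $\phi(t_{1} \pm \varepsilon) \leq M = \phi(t_{1})$ forces $\phi(t_{1} \pm \varepsilon) = M$ for all small $\varepsilon$, so $\phi \equiv M$ near $t_{1}$. This set is also closed in $(p, q)$ by continuity and non-empty by the choice of $M$; as $(p, q)$ is connected, $\phi \equiv M$ on $(p, q)$, whence $\phi(p) = \phi(q) = M > 0$ by continuity, contradicting $\phi(p) = 0$. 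Hence $g$ is convex, which completes the reduction and the proof.
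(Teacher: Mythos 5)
Your proof is correct, but it takes a genuinely different route from the paper. The paper deduces Lemma \ref{k_tone_local} directly from the refinement Lemma \ref{refinement_lemma}: given nodes $x_{0} < \ldots < x_{k}$ in $(a, d)$, insert $k$ extra points of the overlap $(c, b)$ to form a refinement; then every window of $k + 1$ consecutive nodes lies entirely in $(a, b)$ or entirely in $(c, d)$ (a window meeting both $(a, c]$ and $[b, d)$ would have to contain all $k$ inserted points, hence at least $k + 2$ nodes), so the original divided difference is a non-negative combination of non-negative ones. That argument is purely combinatorial, needs no regularity whatsoever, and --- importantly for the rest of the paper --- transfers verbatim to functions on arbitrary subsets of $\R$, which is exactly how it is reused in Corollary \ref{general_local_property} and in Theorem \ref{interpolation_failure}. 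Your route instead leans on the full strength of the regularity theorem, Lemma \ref{k_tone_regularity} (cited to \cite{Bullen}, and not a triviality), reduces to $k = 2$, and then proves locality of convexity by hand; your openness-plus-connectedness argument for the set $\{t : \phi(t) = M\}$ is complete and correct (the one point worth being explicit about, which you do handle, is that a maximizer of $\phi$ on $[p, q]$ is interior since $\phi(p) = \phi(q) = 0 < M$, and that small symmetric perturbations stay inside both $[p, q]$ and a single convexity interval). What your approach buys is self-containedness modulo \ref{k_tone_regularity} and an elementary, transparent core step; what it costs is generality --- it is intrinsically tied to open intervals and to $C^{k-2}$ regularity, so unlike the paper's argument it would not extend to the general-set setting of Theorem \ref{general_monotone}, and it silently requires the separate $k = 1$ case you correctly added, which the refinement argument handles uniformly.
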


While this local property -result could be certainly easily proven with standard regularization argument from the case $f \in C^{n}$ (which is evident considering \ref{divided_basic} (ii)) it is also follows easily from the following lemma.

\begin{lem}\label{refinement_lemma}
	Let $a < x_{0} < x_{1} < \ldots < x_{k} < b$. Then for any $0 \leq k \leq n$ and any refinement (supersequence) $y_{0} < y_{1} < \ldots < y_{n}$ of $(x_{i})_{i = 0}^{k}$ there exists non-negative numbers $t_{0}, t_{1}, \ldots, t_{n - k}$ such that for any $f : (a, b) \to \R$ one has
	\begin{align*}
		[x_{0}, x_{1}, \ldots, x_{k}]_{f} = \sum_{j = 0}^{n - k} t_{j} [y_{j}, y_{j + 1}, \ldots, y_{j + k}]_{f}.
	\end{align*}
\end{lem}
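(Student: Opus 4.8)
The plan is to reduce the general refinement to the insertion of a single new node and then induct on the number $n-k$ of inserted points. The guiding observation is that divided differences over pairwise distinct nodes are \emph{explicit} linear functionals of the values, via $[z_0,\ldots,z_m]_f=\sum_i f(z_i)/\prod_{l\ne i}(z_i-z_l)$. Hence both sides of the claimed identity are linear functionals on the vector $(f(y_0),\ldots,f(y_n))\in\R^{n+1}$; no smoothness of $f$ is needed and the entire statement becomes a finite-dimensional linear-algebra identity. This is the vantage point I would adopt throughout.

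For the base case $n=k$ the refinement coincides with $(x_i)_{i=0}^{k}$ and $t_0=1$ works. For the inductive step I would remove one node $w$ of $(y_j)$ that is not among the $x_i$ (one exists whenever $n>k$), obtaining an intermediate refinement $(z_j)_{j=0}^{n-1}$ of $(x_i)$ with one fewer inserted point, for which the induction hypothesis gives $[x_0,\ldots,x_k]_f=\sum_j s_j[z_j,\ldots,z_{j+k}]_f$ with $s_j\ge 0$. It then remains to re-expand each window $[z_j,\ldots,z_{j+k}]_f$ over the finer sequence $(y_j)$. If $w$ lies outside $(z_j,z_{j+k})$ the block $z_j,\ldots,z_{j+k}$ is already a consecutive window of $(y_j)$ and nothing happens; the only genuine work is the case $z_{j+i-1}<w<z_{j+i}$, which is the single-insertion identity below.

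The key single-insertion lemma is: for $y_0<\cdots<y_{k+1}$ and $1\le i\le k$,
\begin{align*}
[y_0,\ldots,\widehat{y_i},\ldots,y_{k+1}]_f=\frac{y_i-y_0}{y_{k+1}-y_0}\,[y_0,\ldots,y_k]_f+\frac{y_{k+1}-y_i}{y_{k+1}-y_0}\,[y_1,\ldots,y_{k+1}]_f,
\end{align*}
the hat denoting omission. The two coefficients are manifestly non-negative and sum to $1$, and I would arrive at them by matching the coefficients of $f(y_0)$ and $f(y_{k+1})$ using the explicit formula above; the products telescope to exactly these ratios. Substituting this lemma into the inductive hypothesis turns every term into a consecutive $(k+1)$-window of $(y_j)$ carrying a coefficient built from products and sums of non-negative numbers; collecting like windows yields the desired $t_0,\ldots,t_{n-k}\ge 0$ (some possibly zero).

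The main obstacle is verifying that matching those two coefficients already forces the \emph{full} identity, even though $k+2$ values are in play. Here I would use that both sides are combinations of order-$k$ divided differences, hence annihilate every polynomial of degree $\le k-1$, while both evaluate to $1$ on $x^k$; thus their difference annihilates all of $\R_k[x]$. Since the evaluations of $1,x,\ldots,x^k$ at the $k+2$ distinct nodes span a $(k+1)$-dimensional subspace of $\R^{k+2}$, the space of functionals killing it is one-dimensional and spanned by the full divided difference $[y_0,\ldots,y_{k+1}]_f$, whose $f(y_0)$-coefficient is nonzero. As our difference has vanishing $f(y_0)$-coefficient by construction, it must be the zero functional, establishing the single-insertion lemma and, through the induction, the full statement.
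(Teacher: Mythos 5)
Your proposal is correct and complete. Note that the paper does not actually prove this lemma at all --- its ``proof'' is the single line ``See for instance \cite{Boo}'' --- so you have supplied a self-contained argument where the paper defers to the literature. Your route is the standard one found there: reduce to inserting one node at a time and use the single-insertion identity $[y_0,\ldots,\widehat{y_i},\ldots,y_{k+1}]_f=\tfrac{y_i-y_0}{y_{k+1}-y_0}[y_0,\ldots,y_k]_f+\tfrac{y_{k+1}-y_i}{y_{k+1}-y_0}[y_1,\ldots,y_{k+1}]_f$, which I checked is stated with the correct coefficients (they are non-negative and sum to $1$, and a direct computation confirms the $f(y_0)$ and $f(y_{k+1})$ coefficients match). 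Your verification of the middle coefficients via the annihilator argument is clean and airtight: both sides kill $\R_{k-1}[x]$ and agree on $x^k$, so their difference annihilates the $(k+1)$-dimensional image of $\R_k[x]$ under evaluation at the $k+2$ distinct nodes; the annihilator of that image in the dual of $\R^{k+2}$ is one-dimensional, spanned by $[y_0,\ldots,y_{k+1}]_f$, whose $f(y_0)$-coefficient $1/\prod_{l\geq 1}(y_0-y_l)$ is nonzero, while your difference has vanishing $f(y_0)$-coefficient, forcing it to be zero. The induction bookkeeping is also sound: when the removed node $w$ falls outside a window $(z_j,z_{j+k})$ that window remains a consecutive window of $(y_j)$, and when $z_{j+i-1}<w<z_{j+i}$ the local configuration is exactly $k+2$ consecutive points of $(y_j)$ with $w$ at an interior position $1\leq i\leq k$, so the insertion lemma applies and both output windows are consecutive in $(y_j)$; collecting coefficients preserves non-negativity. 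The degenerate case $k=0$ is handled automatically since a singleton window can never be split. Nothing is missing.
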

\begin{proof}
	See for instance \cite{Boo}.
\end{proof}

Also matrix functions can be understood via Cauchy's integral formula. Indeed, if $f$ is analytic, then for suitable $\gamma$ one has
\begin{align}\label{cauchy_matrix}
	f(A) = \frac{1}{2 \pi i} \int_{\gamma} (z I - A)^{-1} f(z) dz.
\end{align}

This observation is particularly fruitful when combined with Lemma \ref{divided_basic} (i): Cauchy's integral formula allows us to interpret linear identities of matrix functions and divided differences as rational functions: if a linear identity, i.e. some linear functional being zero, is proven for all functions of the form $x \mapsto (z - x)^{-1}$, then it automatically holds for any entire function.

There is also a purely formal way to talk about such identities. For any $F \subset \C$, denote by $R^{(k)}(F)$ the rational functions with
\begin{enumerate}
	\item simple poles, all of which lie on $F$
	\item vanishing up to order $k$ at infinity
\end{enumerate}
Then one may define a bilinear pairing between $R^{(1)}(F)$ and $\C^{F}$ (complex functions on $F$) by setting for $f \in \C^{F}$
\begin{align*}
	\left\langle f, \frac{1}{\cdot - a} \right\rangle_{L} = f(a)
\end{align*}
and extending linearly. Then Lemma \ref{divided_basic} (i) rewrites to
\begin{align}\label{formal_divided}
	[x_{0}, x_{1}, \ldots, x_{n}]_{f} = \left\langle f, \frac{1}{(\cdot - x_{0}) \cdots (\cdot - x_{n})}\right\rangle_{L}
\end{align}
and (\ref{cauchy_matrix}) rewrites to
\begin{align}\label{formal_matrix}
	f(A) = \langle f, (z I - A)^{-1} \rangle_{L}.
\end{align}

Note that this pairing also satisfies
\begin{align}\label{formal_shift}
	\langle p f, r\rangle_{L} = \langle f, p r \rangle_{L}
\end{align}
whenever $p$ is a polynomial of degree $k$ and $r \in R^{(1 + k)}(F)$. This follows at once from partial fraction decomposition.

This pairing can be also naturally extended to rational functions with poles of higher order, as long as $f$ is regular enough: if $f$ is $k$ times differentiable at $a$, we may set
\begin{align}\label{formal_shift_der}
	\left\langle f, \frac{1}{(\cdot - a)^{k + 1}}\right\rangle_{L} = \frac{f^{(k)}(a)}{k!}.
\end{align}
Evidently also this extension satisfies (\ref{formal_divided}).

\section{Monotone case}

As usual, denote by $v^{*}$ the adjoint of the map $v : z \to v z$, i.e. the map $w \mapsto \langle w, v\rangle$.

\begin{maar}
	Let us say that a triplet $(A, B, v)$, where $A$ and $B$ are Hermitian $n \times n$ matrices and $v \in \C^{n}$, is a \textbf{projection pair}, if $B - A = v v^{*}$. We say that a projection pair $(A, B, v)$ is \text{strict}, if $v$ is not orthogonal to any eigenvector of $A$.
\end{maar}

We first make some elementary observations on projection pairs.

\begin{lem}
	Let $(A, B, v)$ be a projection pair.
	\begin{enumerate}[(i)]
		\item If the spectra of $A$ and $B$ are disjoint, then $(A, B, v)$ is strict.
		\item If $(A, B, v)$ is a (strict) projection pair, then so is $(-B, -A, v)$.
		\item If $(A, B, v)$ is strict, then $A$ and $B$ have no repeated eigenvalues.
	\end{enumerate}
\end{lem}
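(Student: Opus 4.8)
The plan is to derive all three parts from a single elementary identity describing how the rank-one perturbation $vv^{*}$ acts on vectors orthogonal to $v$. First I would record the following observation. Suppose $u$ is an eigenvector of $A$, say $Au = \lambda u$, with $u \perp v$, i.e. $v^{*}u = \langle u, v\rangle = 0$. Since $B = A + vv^{*}$, we get $Bu = Au + v(v^{*}u) = \lambda u$, so $u$ is also an eigenvector of $B$ for the same eigenvalue $\lambda$. Because $A = B - vv^{*}$, the symmetric statement holds as well: every eigenvector of $B$ orthogonal to $v$ is an eigenvector of $A$ for the same eigenvalue. In particular, $v$ is orthogonal to some eigenvector of $A$ if and only if it is orthogonal to some eigenvector of $B$, with matching eigenvalue.

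For (i) I would argue by contraposition. If $(A, B, v)$ is not strict, choose an eigenvector $u$ of $A$ with $Au = \lambda u$ and $u \perp v$; by the observation $Bu = \lambda u$, so $\lambda \in \spec(A) \cap \spec(B)$ and the spectra are not disjoint. For (ii), the projection-pair relation is immediate, since $(-A) - (-B) = B - A = vv^{*}$, so $(-B, -A, v)$ is again a projection pair. For strictness, I note that $-B$ and $B$ have the same eigenvectors, so I must check that $v$ is orthogonal to no eigenvector of $B$; but this is precisely the second half of the observation above, using $A = B - vv^{*}$ together with the strictness of $(A, B, v)$.

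For (iii), suppose $A$ had an eigenvalue $\lambda$ of multiplicity at least two, so that $\dim \ker(A - \lambda I) \geq 2$. The condition $\langle u, v\rangle = 0$ is a single linear constraint, hence has a nonzero solution $u$ inside this eigenspace; such a $u$ is an eigenvector of $A$ orthogonal to $v$, contradicting strictness. The same reasoning, or an appeal to (ii), then rules out repeated eigenvalues of $B$. The lemma is entirely elementary; the only points demanding a moment of care are keeping the adjoint and inner-product conventions consistent so that $v^{*}u$ genuinely vanishes when $u \perp v$, and the dimension count in (iii), where one uses that a single linear functional cannot be injective on a space of dimension at least two. Beyond recognizing that the whole statement hinges on this one perturbation identity and its symmetry, I do not expect any real obstacle.
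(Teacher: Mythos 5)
Your proposal is correct and follows essentially the same route as the paper: the same rank-one perturbation observation (an eigenvector of $A$ orthogonal to $v$ is an eigenvector of $B$ with the same eigenvalue, and symmetrically via $A = B - vv^{*}$), used contrapositively for (i), via the symmetry for the strictness in (ii), and combined with the codimension-one count on a repeated eigenspace for (iii). No gaps; your write-up is simply a more explicit version of the paper's terse argument.
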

\begin{proof}
	\begin{enumerate}[(i)]
		\item Note that if $(A, B, v)$ is not strict and an eigenvector $e_{i}$ of $A$ is orhtogonal to $v$, then $e_{i}$ is also an eigenvector of $B$ with the same eigenvalue.
		\item The non-strict claim is trivial, so we merely need to prove that if $v$ is orthogonal to any eigenvector of $B$, then it is orthogonal to an eigenvector of $A$. But this was exactly our argument in the previous part.
		\item If $A$ has a repeated eigenvalue, then some eigenvector with this eigenvalue is orhtogonal to $v$. The claim on the spectrum of $B$ follows similarly after applying the previous part.
	\end{enumerate}
\end{proof}

For $q \in \C[x]$ denote by $q^{*}$ the polynomial with conjugated coefficients and denote also
\begin{align*}
	N(q) := q q^{*}.
\end{align*}

The beef of Theorem \ref{main_monotone} lies in the following observation.

\begin{lem}\label{main_identity}
	Let us denote
	\begin{align*}
		R_{n, +}(a, b) = \left\{\frac{N(q)(z)}{\prod_{i = 0}^{2 n - 1} (z - x_{i})} | a < x_{0} < \ldots < x_{2 n - 1} < b, q \in \C_{n - 1}[x]\right\}.
	\end{align*}
	Then 
	\begin{align*}
	R_{n, +}(a, b) = &\{\langle ((z I - B)^{-1} - (z I - A)^{-1}) w, w \rangle | w \in \C^{n}, a I < A, B < b I, \\
		&(A, B, v) \text{ is a strict projection pair for some $v$}\} \\
	\end{align*}
\end{lem}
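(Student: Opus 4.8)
The plan is to compute the rational function $z \mapsto \langle((zI-B)^{-1} - (zI-A)^{-1})w, w\rangle$ explicitly by diagonalizing $A$ and applying the Sherman--Morrison formula to the rank-one update $B = A + vv^*$, and then to match the result term by term with the description of $R_{n,+}(a,b)$. Since $(A,B,v)$ is strict, $A$ has $n$ distinct eigenvalues $\lambda_1 < \ldots < \lambda_n$ in $(a,b)$ with orthonormal eigenvectors $e_i$, and each coordinate $\hat{v}_i = \langle v, e_i\rangle$ is nonzero. Writing $\hat{w}_i = \langle w, e_i\rangle$ and using $(zI-A)^{-1} = \sum_i (z-\lambda_i)^{-1} e_i e_i^*$, everything reduces to scalar sums over the eigenbasis.

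For the inclusion of the right-hand set into $R_{n,+}(a,b)$, Sherman--Morrison gives
\[(zI-B)^{-1} - (zI-A)^{-1} = \frac{(zI-A)^{-1}vv^*(zI-A)^{-1}}{1 - v^*(zI-A)^{-1}v}.\]
Here the denominator equals $\prod_i(z-\mu_i)/\prod_i(z-\lambda_i)$, where $\mu_1, \ldots, \mu_n$ are the eigenvalues of $B$; strictness forces the strict interlacing $\lambda_1 < \mu_1 < \lambda_2 < \ldots < \lambda_n < \mu_n$, so the $2n$ numbers $\lambda_i, \mu_i$ are distinct and lie in $(a,b)$. Pairing against $w$ and clearing denominators, I expect the expression to collapse, with one factor of $\prod_i(z-\lambda_i)$ cancelling, to
\[\langle((zI-B)^{-1} - (zI-A)^{-1})w, w\rangle = \frac{Q(z)\,Q^*(z)}{\prod_i (z-\lambda_i)(z-\mu_i)}, \qquad Q(z) = \sum_i \hat{w}_i\,\overline{\hat{v}_i}\prod_{j\neq i}(z-\lambda_j).\]
Since $Q \in \C_{n-1}[x]$ and $Q^*$ is exactly its coefficient-conjugate (the $\lambda_j$ being real), the numerator is $N(Q)$, and the denominator carries the required $2n$ distinct poles; hence the function lies in $R_{n,+}(a,b)$.

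For the reverse inclusion I would run the computation backwards. Given $a < x_0 < \ldots < x_{2n-1} < b$ and $q \in \C_{n-1}[x]$, set $\lambda_i = x_{2i-2}$ and $\mu_i = x_{2i-1}$, so the two families interlace. Take $A$ to be the diagonal matrix with entries $\lambda_i$, and recover the magnitudes from the secular equation $\prod_i(z-\mu_i) = \prod_i(z-\lambda_i)(1 - \sum_k |\hat{v}_k|^2/(z-\lambda_k))$, which yields $|\hat{v}_k|^2 = -\prod_i(\lambda_k - \mu_i)/\prod_{i\neq k}(\lambda_k - \lambda_i)$; a sign count using the interlacing shows this is positive, so a genuine $v$ (with arbitrary phases) exists and determines $B = A + vv^*$ with spectrum $\{\mu_i\}$. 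Finally, since the Lagrange-type polynomials $\prod_{j\neq i}(z-\lambda_j)$ form a basis of $\C_{n-1}[x]$ and each $\hat{v}_i \neq 0$, I can solve the linear system for the coordinates $\hat{w}_i$ so that $Q = q$; the chosen $w$ then realizes $N(q)/\prod_i(z-x_i)$.

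The main obstacle is pinning down the exact correspondence rather than the mechanics of Sherman--Morrison. Two points need care: first, verifying that the numerator produced is precisely $N(Q)$ for an arbitrary $Q \in \C_{n-1}[x]$ --- this is where I use that the phases of $v$ are free and that the map $w \mapsto Q$ is onto $\C_{n-1}[x]$; and second, the sign computation establishing $|\hat{v}_k|^2 > 0$, which is exactly the content of strict interlacing and is what makes the reverse construction legitimate. Degenerate choices of $w$ (vanishing coordinates, hence $q$ sharing a root with a pole) cause no trouble, since they merely correspond to non-reduced representatives already allowed in $R_{n,+}(a,b)$.
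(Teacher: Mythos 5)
Your proposal is correct and takes essentially the same approach as the paper: both rest on a rank-one resolvent-update identity that factors $\langle((zI-B)^{-1}-(zI-A)^{-1})w,w\rangle$ as $N(Q)(z)/(\det(zI-A)\det(zI-B))$ with $Q$ ranging over all of $\C_{n-1}[x]$ as $w$ varies (using strictness), and on realizing arbitrary interlacing spectra by a strict projection pair for the converse. The only differences are mechanical: you use Sherman--Morrison plus the determinant lemma and an explicit residue/sign computation from the secular equation, whereas the paper uses the identity $C^{-1}-D^{-1}=D^{-1}((D-C)+(D-C)C^{-1}(D-C))D^{-1}$, identifies $1+\langle(zI-B)^{-1}v,v\rangle$ with the characteristic-polynomial ratio via a pole-simplicity argument, and in the converse prescribes the spectrum of $B$ and chooses $v$ so that this ratio vanishes at the remaining points.
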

\begin{proof}
	``$\supset$": As one easily checks, for any two invertible matrices $C, D$ one has
	\begin{align}
		C^{-1} - D^{-1} = D^{-1} ((D - C) + (D - C) C^{-1} (D - C)) D^{-1}.
	\end{align}
	Plugging in $C = z I - B$ and $D = z I - A$ for a strict projection pair $(A, B, v)$, and taking $\langle (\cdot) w, w \rangle$ on both simplifies to
	\begin{align*}
		\langle ((z I - B)^{-1} - (z I - A)^{-1}) w, w \rangle = \langle (z I - A)^{-1} v, w\rangle \langle (z I - A)^{-1} w, v\rangle \left(1 + \langle (z I - B)^{-1} v, v\rangle \right).
	\end{align*}
	Write $ \langle (z I - A)^{-1} v, w\rangle = q(z)/\det(z I - A)$; note that $q \in \C_{n - 1}[x]$ and $\langle (z I - A)^{-1} w, v\rangle = q^{*}/\det(z I - A)$. We next claim that as $w$ ranges over $\C^{n}$, $q$ ranges over $\C_{n - 1}[x]$. If $(e_{i})_{i = 1}^{n}$ is an eigenbasis of $A$, with $\lambda_{i}(A)$'s as the respective eigenvalues, we have
	\begin{align*}
		q(z) = \det(z I - A) \langle (z I - A)^{-1} v, w\rangle = \sum_{i = 1}^{n} \left(\prod_{j \neq i} (z - \lambda_{j}(A))\right) \langle v, e_{i}\rangle \langle e_{i}, w\rangle =: \sum_{i = 1}^{n} p_{i}(z) \langle e_{i}, w\rangle.
	\end{align*}
	But since $(A, B, v)$ is strict, $p_{i}(\lambda_{j}(A)) \neq 0$, if and only if $i = j$, so $p_{i}$'s are linearly independent and hence span $\C_{n - 1}[x]$.

	We now have
	\begin{align*}
		\langle ((z I - B)^{-1} - (z I - A)^{-1}) w, w \rangle = \frac{N(q)}{\det(z I - A)^{2}} \left(1 + \langle (z I - B)^{-1} v, v\rangle \right)
	\end{align*}
	Note that all poles of the left-hand side are simple. By choosing $q = 1$, this implies that $\left(1 + \langle (z I - B)^{-1} v, v\rangle \right)$ vanishes at the spectrum of $A$ and hence
	\begin{align}\label{char_pol}
		1 + \langle (z I - B)^{-1} v, v\rangle = \det(z I - A)/\det(z I - B).
	\end{align}
	But again by the simplicity of the poles of the left-hand side, spectra of $A$ and $B$ are disjoint, and we are hence done with the first inclusion.

	``$\subset$": Calculations of the previous inclusion imply that we just need to verify that as $(A, B, v)$ ranges over all strict projection pairs on $(a, b)$, the spectra of $A$ and $B$ range over all tuples of $2 n$ distinct numbers on $(a, b)$. Note that if we manage to find a projection pair with prescribed spectra of $2 n$ distinct numbers, it is automatically strict: non-strict projection pairs have repeated eigenvalues.

	To this end first choose $a < x_{0} < \ldots < x_{2 n - 1} < b$ and $B$ with $\{x_{2 i + 1} | 0 \leq i \leq n - 1\}$ as spectrum (and arbitary eigenspaces). By (\ref{char_pol}) we need to find $v$ such that $r(z) := \left(1 + \langle (z I - B)^{-1} v, v\rangle \right)$ has $x_{2 i}$'s as zeros. As $v$ ranges over $\C^{n}$, (by moving to eigenspaces of $B$) one sees that $r$ ranges over all rational functions with $r(\infty) = 1$ and simple poles at $\{x_{2 i + 1} | 0 \leq i \leq n - 1\}$ with non-negative residues. Since $x_{2 i}$'s and $x_{2 i + 1}$'s interlace, $\prod_{i = 0}^{n} (z - x_{2 i})/(z - x_{2 i + 1})$ is such function, so we are done.
\end{proof}

\begin{kor}\label{main_corollary}
	Let $f : (a, b) \to \R$. Then $f(A) \leq f(B)$ for any strict projection pair $(A, B, v)$ with $a I < A \leq B < b I$, if and only if
	\begin{align}\label{main_condition}
		[x_{0}, x_{1}, \ldots, x_{2 n - 1}]_{f N(q)} \geq 0
	\end{align}
	for any $a < x_{0} < x_{1} < \ldots < x_{2 n - 1} < b$ and $q \in \C_{n - 1}[x]$.
\end{kor}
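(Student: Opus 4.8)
The plan is to recognize this corollary as essentially a dictionary translation of Lemma \ref{main_identity} through the formal pairing $\langle \cdot, \cdot\rangle_{L}$, since all the genuine content already sits in that lemma. First I would rewrite the matrix inequality $f(A) \leq f(B)$ in scalar form: it holds precisely when $\langle (f(B) - f(A)) w, w\rangle \geq 0$ for every $w \in \C^{n}$. Using the formal expression (\ref{formal_matrix}) for a matrix function together with linearity of the pairing in the function slot, this quantity becomes
\begin{align*}
	\langle (f(B) - f(A)) w, w\rangle = \left\langle f, \langle ((z I - B)^{-1} - (z I - A)^{-1}) w, w\rangle \right\rangle_{L},
\end{align*}
where the inner bracket is now a scalar rational function of $z$.

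Next I would invoke Lemma \ref{main_identity}: as $(A, B, v)$ ranges over all strict projection pairs with $a I < A, B < b I$ and $w$ over $\C^{n}$, the rational function appearing inside ranges over exactly $R_{n, +}(a, b)$. Hence ``$f(A) \leq f(B)$ for every strict projection pair'' is equivalent to ``$\langle f, r\rangle_{L} \geq 0$ for every $r \in R_{n, +}(a, b)$''. The double quantification causes no trouble here: for a projection pair $B - A = v v^{*}$ is automatically positive, and $A \leq B$ simply means the scalar inequality holds for all $w$, so ranging over $w$ and over pairs simultaneously is precisely the description furnished by the lemma.

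It then remains to identify $\langle f, r\rangle_{L}$ with the divided difference. For $r = N(q)(z)/\prod_{i = 0}^{2 n - 1}(z - x_{i}) \in R_{n, +}(a, b)$, the shift rule (\ref{formal_shift}) applied with the polynomial $p = N(q)$ moves $N(q)$ into the function slot, and (\ref{formal_divided}) then evaluates the result:
\begin{align*}
	\langle f, r\rangle_{L} = \left\langle f N(q), \frac{1}{\prod_{i = 0}^{2 n - 1}(z - x_{i})}\right\rangle_{L} = [x_{0}, x_{1}, \ldots, x_{2 n - 1}]_{f N(q)}.
\end{align*}
Combining the two equivalences yields the claim.

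The step requiring care — and the only place the argument could fail for cheap reasons — is the legitimacy of the shift rule (\ref{formal_shift}), which demands a degree match. Here $N(q) = q q^{*}$ has degree $2(n - 1) = 2 n - 2$, so the rule requires the rational factor to lie in $R^{(2 n - 1)}$; the function $1/\prod_{i = 0}^{2 n - 1}(z - x_{i})$ has $2 n$ distinct poles and vanishes to order $2 n \geq 2 n - 1$ at infinity, so it qualifies. Beyond this bookkeeping I would verify that no regularity of $f$ is needed: strict projection pairs have simple spectra inside $(a, b)$, so both the matrix functions $f(A), f(B)$ read off via (\ref{formal_matrix}) and the divided differences over the distinct points $x_{0}, \ldots, x_{2 n - 1}$ depend only on finitely many values of $f$, and the entire equivalence is purely algebraic.
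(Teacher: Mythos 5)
Your proof is correct and takes essentially the same route as the paper: both translate the matrix inequality $f(A) \leq f(B)$ into the formal pairing via (\ref{formal_matrix}), invoke Lemma \ref{main_identity} to identify the resulting scalar rational functions with the set $R_{n, +}(a, b)$, and then use the shift rule (\ref{formal_shift}) together with (\ref{formal_divided}) to read off the divided differences $[x_{0}, x_{1}, \ldots, x_{2 n - 1}]_{f N(q)}$. Your explicit degree check justifying the shift rule is bookkeeping the paper leaves implicit, and is a welcome addition.
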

\begin{proof}
	Taking $\langle f, \cdot \rangle_{L}$ on both sets in Lemma \ref{main_identity} and applying (\ref{formal_divided}), (\ref{formal_matrix}) and (\ref{formal_shift}) reveals that
	\begin{align*}
		& \left\{[x_{0}, x_{1}, \ldots, x_{2 n - 1}]_{f N(q)} | a < x_{0} < x_{1} < \ldots < x_{2 n - 1} < b \text{ and } q \in \C_{n - 1}[x]\right\} \\
		=& \left\{ \left\langle f N(q), \frac{1}{(\cdot - x_{0}) \cdots (\cdot - x_{2 n - 1})} \right\rangle_{L} |\, a < x_{0} < x_{1} < \ldots < x_{2 n - 1} < b \text{ and } q \in \C_{n - 1}[x]\right\} \\
		=& \left\{ \left\langle f , \frac{N(q)}{(\cdot - x_{0}) \cdots (\cdot - x_{2 n - 1})} \right\rangle_{L} |\, a < x_{0} < x_{1} < \ldots < x_{2 n - 1} < b \text{ and } q \in \C_{n - 1}[x]\right\} \\
		=& \left\{ \left\langle f , \langle (((\cdot) I - B)^{-1} - ((\cdot) I - A)^{-1}) w, w \rangle \right\rangle_{L} | w \in \C^{n},\right. \\
		& \left.a I < A \leq B < b I \text{ and } (A, B, v) \text{ is a strict projection pair} \right\} \\
		=& \left\{ \langle (f(B) - f(A)) w, w \rangle | w \in \C^{n},\right. \\
		& \left.a I < A \leq B < b I \text{ and } (A, B, v) \text{ is a strict projection pair} \right\} \\
	\end{align*}
\end{proof}

To prove Theorem \ref{main_monotone} we need two more observations.

\begin{lem}\label{span_lemma}
	Fix $n \geq 1$ and Let $f : (a, b) \to \R$ be such that $[x_{0}, x_{1}, \ldots, x_{2 n + 1}]_{f N(q)} \geq 0$ for any $a < x_{0} < \ldots < x_{2 n + 1} < b$ and $q \in \C_{n}[x]$. Then $[x_{0}, x_{1}, \ldots, x_{2 n - 1}]_{f N(q)} \geq 0$ for any $a < x_{0} < \ldots < x_{2 n - 1} < b$ and $q \in \C_{n - 1}[x]$.
\end{lem}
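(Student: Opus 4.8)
The plan is to realize each order-$(2n-1)$ divided difference occurring in the conclusion as a limit of order-$(2n+1)$ divided differences that are covered by the hypothesis. Fix ordered points $a < x_0 < \ldots < x_{2n-1} < b$ and $q \in \C_{n-1}[x]$, and choose any $c$ with $x_{2n-1} < c < b$. For small $\epsilon > 0$ I would consider the $(n+1)$-level data consisting of the $2n+2$ ordered nodes $x_0 < \ldots < x_{2n-1} < c < c + \epsilon$ together with the polynomial $\tilde q_\epsilon := q \cdot (x - c - \epsilon/2) \in \C_n[x]$. Since $N$ is multiplicative and $x - c - \epsilon/2$ has real coefficients, one has $N(\tilde q_\epsilon) = N(q)\cdot (x - c - \epsilon/2)^2$, so the hypothesis applies verbatim and yields
\[
	[x_0, \ldots, x_{2n-1}, c, c+\epsilon]_{f N(\tilde q_\epsilon)} \ge 0 .
\]

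The decisive feature of the construction is the placement of the double root at $c + \epsilon/2$ strictly between the two new nodes $c$ and $c+\epsilon$: this is exactly what keeps $N(\tilde q_\epsilon)$ a genuine square (hence admissible) while forcing it to be of size $O(\epsilon^2)$ at both new nodes. Writing the divided difference through (\ref{formal_divided}), or equivalently as $\sum_{p}(\Res_p) f(p)$ over its simple poles, the two terms attached to $c$ and $c+\epsilon$ carry a factor $N(\tilde q_\epsilon)(c) = N(q)(c)(\epsilon/2)^2$ (resp. $N(\tilde q_\epsilon)(c+\epsilon) = N(q)(c+\epsilon)(\epsilon/2)^2$) against a denominator containing a single vanishing factor $\mp \epsilon$, so each contributes $O(\epsilon)$ and disappears as $\epsilon \to 0^+$; here one only needs $f$ locally bounded near $c$, which holds because taking $q = 1$ in the hypothesis shows $f$ is $(2n+1)$-tone and hence continuous by Lemma \ref{k_tone_regularity}. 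Meanwhile each term attached to a fixed node $x_i$ has $N(\tilde q_\epsilon)(x_i) = N(q)(x_i)(x_i - c - \epsilon/2)^2$ and the extra denominator factors $(x_i - c)(x_i - c - \epsilon)$, so the surplus factors converge to $(x_i - c)^2$ upstairs and downstairs and cancel.

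Passing to the limit $\epsilon \to 0^+$ therefore gives
\[
	[x_0, \ldots, x_{2n-1}]_{f N(q)} = \lim_{\epsilon \to 0^+} [x_0, \ldots, x_{2n-1}, c, c+\epsilon]_{f N(\tilde q_\epsilon)} \ge 0,
\]
which is the desired conclusion. The only genuine obstacle is the positivity bookkeeping, and it is instructive to see where the naive attempt breaks: one cannot simply pad $\tfrac{N(q)}{\prod (x - x_i)}$ to a single order-$(2n+1)$ expression $\tfrac{N(q)(x-c)(x-c')}{\prod (x-x_i)(x-c)(x-c')}$, because $(x-c)(x-c')$ is negative on $(c,c')$ and so the numerator is not of the form $N(\cdot)$ and the hypothesis does not apply. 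Collapsing the two new nodes onto a common double root is precisely the device that restores positivity throughout the family, at the price of obtaining the identity only in the limit rather than exactly.
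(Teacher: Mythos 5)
Your proof is correct, but it takes a genuinely different route from the paper's. The paper establishes the cone inclusion $R_{n,+}(a,b) \subset \cone(R_{n+1,+}(a,b))$ by an \emph{exact} identity: choosing three auxiliary points $y_0 < y_1 < y_2$ in $(a,b)$ disjoint from the $x_i$'s, one has
\begin{align*}
\frac{N(q)}{\prod_{i}(\cdot - x_i)} = \frac{y_2 - y_1}{y_2 - y_0}\,\frac{N((\cdot - y_0)q)}{(\cdot - y_0)(\cdot - y_1)\prod_i(\cdot - x_i)} + \frac{y_1 - y_0}{y_2 - y_0}\,\frac{N((\cdot - y_2)q)}{(\cdot - y_1)(\cdot - y_2)\prod_i(\cdot - x_i)},
\end{align*}
a positive combination of two admissible elements of $R_{n+1,+}(a,b)$: the double root at $y_0$ (resp.\ $y_2$) annihilates that node's contribution, and the two contributions at $y_1$ cancel against each other. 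This is precisely the ``exact identity'' version of the cancellation that you argued was impossible with a single padded term; it becomes possible with two. The comparison: the paper's identity uses no regularity of $f$ whatsoever (one simply pairs both sides with $f$), and it consumes only three extra \emph{points of the domain}, which is why the same proof is later recycled for the ``Moreover'' part of Theorem \ref{general_monotone} on arbitrary sets $F$ with $\#F > 2n$. Your limiting argument instead needs the two new nodes $c$ and $c + \epsilon$ to be arbitrarily close --- i.e.\ the interval structure of the domain --- and needs $f$ locally bounded near $c$; you close that gap correctly (taking $q = 1$ shows $f$ is $(2n+1)$-tone, and Lemma \ref{k_tone_regularity} then gives continuity), so your proof of the stated lemma is complete, but it would not survive the paper's later generalization to sets without accumulation points. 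What your route buys in exchange is economy of auxiliary data (two new nodes instead of three) and a very transparent positivity check, since the double root sits visibly between the two new nodes.
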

\begin{proof}
	By \ref{formal_divided} we just need to verify that
	\begin{align*}
		R_{n, +}(a, b) \subset \cone(R_{n + 1, +}(a, b)) := \left\{\sum_{i = 1}^{m} t_{i} v_{i} | t_{i} \geq 0, v_{i} \in R_{n + 1, +}(a, b) \right\}
	\end{align*}
	for any $n \geq 1$.

	So fix any $a < x_{0} < \ldots < x_{2 n - 1} < b$ and $q \in \C_{n - 1}[x]$. Take also any $y_{0} < y_{1} < y_{2} \in (a, b)$ disjoint from $x_{i}$'s. Now
	\begin{align*}
		\frac{N(q)}{\prod_{i = 0}^{2 n - 1} (\cdot - x_{i})} &= \frac{y_{2} - y_{1}}{y_{2} - y_{0}}\frac{N((\cdot - y_{0}) q)}{(\cdot - y_{0}) (\cdot - y_{1})\prod_{i = 0}^{2 n - 1} (\cdot - x_{i})} + \frac{y_{1} - y_{0}}{y_{2} - y_{0}}\frac{N((\cdot - y_{2})q)}{(\cdot - y_{1}) (\cdot - y_{2})\prod_{i = 0}^{2 n - 1} (\cdot - x_{i})} \\
		&\in \cone(R_{n + 1, +}(a, b)).
	\end{align*}
\end{proof}

\begin{lem}\label{polynomial_lemma}
	Let $n$ be a positive integer and $p$ be a complex polynomial. Then the following are equivalent.
	\begin{enumerate}
		\item $p$ is non-negative on $\R$ and of degree at most $2 n$.
		\item $p = q_{1}^{2} + q_{2}^{2}$ for some $q_{1}, q_{2} \in \R_{n}[x]$.
		\item $p = N(q)$ for some $q \in \C_{n}[x]$.
	\end{enumerate}
\end{lem}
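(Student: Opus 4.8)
The plan is to note that among the three implications only one carries real content; the other two are formal. First I would dispense with the equivalence of (2) and (3). Writing $q = q_1 + i q_2$ with $q_1, q_2 \in \R_n[x]$ gives a bijection between $\C_n[x]$ and pairs from $\R_n[x]$, and since the coefficients of $q^*$ are the conjugates of those of $q$ we have $q^* = q_1 - i q_2$, whence $N(q) = q q^* = q_1^2 + q_2^2$. So (2) and (3) describe literally the same polynomials. The implication (3) $\Rightarrow$ (1) is equally immediate: for real $x$ one has $q^*(x) = \overline{q(x)}$, so $N(q)(x) = |q(x)|^2 \ge 0$, while $\deg N(q) = 2\deg q \le 2n$.

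The substance is therefore (1) $\Rightarrow$ (3), which is the classical factorization of a nonnegative real polynomial. I would begin by observing that $p$ must have real coefficients: being nonnegative it is in particular real-valued on $\R$, so $p$ and $p^*$ agree on all of $\R$ and hence coincide as polynomials. The case $p \equiv 0$ is handled by $q = 0$, so assume $p \not\equiv 0$; then its leading coefficient $c$ is positive and $\deg p = 2m$ is even with $m \le n$. Factoring $p$ over $\C$, nonnegativity forces each real root to occur with even multiplicity (the sign of $p$ cannot change as $x$ passes through a real root), while reality of the coefficients makes the non-real roots occur in conjugate pairs.

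Finally I would collect one ``half'' of this factorization. Setting
\begin{align*}
	q = \sqrt{c}\,\prod_j (x - r_j)^{a_j} \prod_k (x - \zeta_k),
\end{align*}
where the $r_j$ are the distinct real roots, of multiplicity $2 a_j$ each, and the $\zeta_k$ are one representative from each conjugate pair of non-real roots, we get $q \in \C_n[x]$ since $\deg q = m \le n$. Because $\sqrt{c}$ and the $r_j$ are real, passing to $q^*$ merely replaces each $\zeta_k$ by $\overline{\zeta_k}$, so $q q^* = c \prod_j (x - r_j)^{2 a_j} \prod_k (x - \zeta_k)(x - \overline{\zeta_k}) = p$, i.e. $p = N(q)$. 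The only genuine points to justify are the even multiplicity of real roots and the conjugate pairing of the rest; both are standard consequences of $p \ge 0$ and of $p$ having real coefficients, so I do not expect any serious obstacle — the lemma is really just bookkeeping around the fundamental theorem of algebra.
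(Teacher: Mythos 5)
Your proof is correct. The paper gives no argument of its own for this lemma (its proof is the single line ``See for instance \cite{Prestel}''), and what you have supplied is exactly the standard argument such a reference contains: the routine identifications $(2) \Leftrightarrow (3)$ and $(3) \Rightarrow (1)$, followed by the classical factorization for $(1) \Rightarrow (3)$ --- real coefficients, even multiplicity at real roots, conjugate pairing of the non-real roots (read ``one representative from each conjugate pair'' with multiplicity, so a non-real root of multiplicity $\mu$ contributes $\mu$ copies), and taking one half of the factorization.
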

\begin{proof}
	See for instance \cite{Prestel}.
\end{proof}

\begin{proof}[Proof of Theorem \ref{main_monotone}]
	``$\Rightarrow$": This follows immediately from Corollary \ref{main_corollary}.

	``$\Leftarrow$": Take any $a I < A \leq B < b I$. Since by the spectral theorem one can write $A = A_{0} \leq A_{1} \leq \ldots \leq A_{n - 1} \leq A_{n} = B$ where $\rank(A_{i + 1} - A_{i}) \leq 1$, we may assume that $(A, B, v)$ is a projection pair for some $v \in \C^{n}$. We may also assume that $v$ is not orthogonal to any eigenvector of $A$. Indeed, if this is not the case, then $A$ and $B$ share eigenvector and eigenvalues and the discussion can be reduced to $(n - 1)$-dimensional space, the orthocomplement of this common eigenspace. We are now done by induction on $n$ and Lemma \ref{span_lemma}.

	Finally, the case of strict projection pair follows immediately from Corollary \ref{main_corollary} and Lemma \ref{polynomial_lemma}.
\end{proof}

\begin{kor}\label{mon_reg}
	If $f$ is $n$-monotone, then $f \in C^{2 n - 3}$, $f^{(2 n - 3)}$ is convex and $f^{(2 n - 1)}$ exists and is non-negative almost everywhere.
\end{kor}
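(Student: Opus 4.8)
The plan is to extract a tonicity statement from Theorem \ref{main_monotone} by specializing the polynomial $q$, and then to feed this into the real-variable regularity theory of $k$-tone functions recorded in Lemma \ref{k_tone_regularity}, finishing with the classical differentiation theory of convex functions. Throughout we may assume $n \geq 2$, since for $n = 1$ the statements about $f^{(2n-3)}$ are vacuous and the remaining claim reduces to the classical fact that a monotone function is differentiable almost everywhere with non-negative derivative.

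First I would take $q \equiv 1 \in \R_{n - 1}[x]$ in Theorem \ref{main_monotone}. As $N(1) = 1$, the theorem yields
\begin{align*}
	[x_{0}, x_{1}, \ldots, x_{2 n - 1}]_{f} \geq 0
\end{align*}
for all pairwise distinct $x_{0}, \ldots, x_{2 n - 1} \in (a, b)$; that is, $f$ is $(2n - 1)$-tone (this is precisely the content of Corollary \ref{regularity}). Next I would apply Lemma \ref{k_tone_regularity} with $k = 2 n - 1 \geq 2$, which immediately gives $f \in C^{2 n - 3}$ and that $f^{(2 n - 3)}$ is convex. This disposes of the first two assertions.

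For the almost-everywhere statement I would invoke the differentiation theory of convex and monotone functions. Writing $g := f^{(2 n - 3)}$, convexity of $g$ guarantees that its right derivative exists everywhere, is non-decreasing, and coincides with $g' = f^{(2 n - 2)}$ off a countable set. A non-decreasing function is differentiable almost everywhere by Lebesgue's theorem, so $g'' = f^{(2 n - 1)}$ exists almost everywhere, and its non-negativity follows from the monotonicity of $f^{(2 n - 2)}$.

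The reduction to tonicity is purely algebraic, so the only genuine analytic content lies in the final paragraph. The main obstacle there is to pass cleanly from ``$f^{(2n-3)}$ is convex'' to ``$f^{(2n-1)}$ exists a.e.\ and is $\geq 0$'' without assuming more regularity than convexity actually provides: one must argue via the everywhere-defined one-sided derivatives of $g$ and Lebesgue's differentiation theorem for monotone functions, rather than treating $f^{(2n-2)}$ as if it were everywhere differentiable.
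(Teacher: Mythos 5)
Your proof is correct and takes essentially the same route as the paper, which simply cites Theorem \ref{main_monotone} (specialized to $q \equiv 1$, giving $(2n-1)$-tonicity) together with Lemma \ref{k_tone_regularity}. Your final paragraph just makes explicit the standard differentiation theory of convex and monotone functions that the paper's ``follows immediately'' leaves implicit, and your separate treatment of the $n = 1$ edge case is a harmless addition.
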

\begin{proof}
	This follows immediately from Theorem \ref{main_monotone} and Lemma \ref{k_tone_regularity}.
\end{proof}

\begin{kor}\label{mon_loc}
	For any positive integer $n$, $n$-monotonicity is a local property.
\end{kor}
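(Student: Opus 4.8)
The plan is to deduce locality directly from the divided-difference characterization of Theorem \ref{main_monotone}, which recasts $n$-monotonicity as a $(2n-1)$-toneness condition and thereby reduces the statement to the already-local nature of $k$-toneness. By a \emph{local property} I mean the following: $f : (a, d) \to \R$ is $n$-monotone as soon as every point of $(a, d)$ has an open neighbourhood on which $f$ is $n$-monotone. Since the restriction of an $n$-monotone function to a subinterval is evidently $n$-monotone, only the gluing direction needs an argument, and its essential instance is the two-interval case recorded in Corollary \ref{local_property}.

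First I would rephrase Theorem \ref{main_monotone} interval-by-interval: for any interval $I$, $f$ is $n$-monotone on $I$ if and only if, for every fixed $q \in \R_{n-1}[x]$, the function $f q^2$ is $(2n-1)$-tone on $I$, meaning all its order-$(2n-1)$ divided differences over points of $I$ are non-negative. The point to stress is that the polynomials $q$ range over the same global family $\R_{n-1}[x]$ no matter which $I$ is considered; the interval enters only through the location of the interpolation points. This uniformity is exactly what allows the universal quantifier over $q$ to commute with a gluing step.

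Next I would glue. Fix $q \in \R_{n-1}[x]$ and suppose $f$ is $n$-monotone on $(a, b)$ and on $(c, d)$ with $a < c < b < d$. Then $f q^2$ is $(2n-1)$-tone on each of these two intervals, so Lemma \ref{k_tone_local}, applied with $k = 2n-1$, shows $f q^2$ is $(2n-1)$-tone on $(a, d)$. As $q$ was arbitrary, Theorem \ref{main_monotone} gives that $f$ is $n$-monotone on $(a, d)$, which is precisely Corollary \ref{local_property}. To upgrade this two-interval statement to an arbitrary open cover, I would note that the compact interval spanned by any tuple $x_0 < \cdots < x_{2n-1}$ is covered by finitely many of the given neighbourhoods and chain the two-interval result finitely often; alternatively one may invoke Lemma \ref{refinement_lemma} directly, writing each divided difference over points straddling several neighbourhoods as a non-negative combination of divided differences over points confined to a single neighbourhood, each of which is non-negative by hypothesis.

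I do not expect a genuine obstacle here, because all of the analytic substance has already been absorbed into Theorem \ref{main_monotone}. Divided differences of a fixed function depend on only finitely many point-values, so once $n$-monotonicity is identified with a sign condition on such differences, locality is nearly automatic; the sole subtlety is to keep $q$ fixed throughout the gluing, which the interval-by-interval reformulation above arranges.
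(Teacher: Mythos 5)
Your proof is correct and takes essentially the same route as the paper, which deduces Corollary \ref{mon_loc} immediately from Theorem \ref{main_monotone} together with Lemma \ref{k_tone_local}, i.e.\ by fixing $q \in \R_{n-1}[x]$ and gluing the $(2n-1)$-toneness of $f q^{2}$ across the overlapping intervals. Your additional remarks on passing from the two-interval case to an arbitrary open cover (via compactness or Lemma \ref{refinement_lemma}) merely spell out details the paper leaves implicit.
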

\begin{proof}
	This follows immediately from Theorem \ref{main_monotone} and Lemma \ref{k_tone_local}.
\end{proof}

\section{Connection to Loewner and Dobsch matrices}

Theorem \ref{main_monotone} is closely related to both Theorems \ref{basic_mon} and \ref{hankel_mon}; they can both be understood as a special case of the divided difference condition in Theorem \ref{main_monotone} with particular choices of $x_{i}$'s.

\begin{lause}\label{monotone_full}
	Let $n \geq 2$, $(a, b) \subset \R$, $f : (a, b) \to \R$. Then the following are equivalent:
	\begin{enumerate}
		\item \label{cond1} $f$ is $n$-monotone.
		\item \label{cond2} For any $a < x_{0} < x_{1} < \ldots < x_{2 n - 1} < b$ and $q \in \R_{n - 1}[x]$
		\begin{align*}
			[x_{0}, x_{1}, \ldots, x_{2 n - 1}]_{f q^2} \geq 0.
		\end{align*}
		\item \label{cond3} For any $a < x_{0} < x_{1} < \ldots < x_{2 n - 1} < b$ and $q \in \C_{n - 1}[x]$
		\begin{align*}
			[x_{0}, x_{1}, \ldots, x_{2 n - 1}]_{f N(q)} \geq 0.
		\end{align*}
		\item \label{cond4} $f \in C^{1}(a, b)$ and for any $a < x_{1} < x_{2} < \ldots < x_{n} < b$ and $q \in \C_{n - 1}[x]$
		\begin{align*}
			[x_{1}, x_{1}, x_{2}, x_{2}, \ldots, x_{n}, x_{n}]_{f N(q)} \geq 0.
		\end{align*}
		\item \label{cond5} $f \in C^{1}(a, b)$ and for any $a < x_{1} < x_{2} < \ldots < x_{n} < b$ and the Loewner matrix
		\begin{align*}
		([x_{i}, x_{j}]_{f})_{1 \leq i, j \leq n}
		\end{align*}
		is positive.
		\item \label{cond55} $f \in C^{1}(a, b)$ and for any $a < x_{1} < x_{2} < \ldots < x_{n} < b$ and the extended Loewner matrix
		\begin{align*}
		([x_{1}, x_{2}, \ldots, x_{i}, x_{1}, x_{2}, \ldots, x_{j}]_{f})_{1 \leq i, j \leq n}
		\end{align*}
		is positive.
		\item \label{cond6} $f \in C^{2 n - 3}(a, b)$, $f^{(2 n - 3)}$ is convex and $(f N(q))^{(2 n - 1)}$ exists almost everywhere and is non-negative, for every $q \in \C_{n - 1}[x]$.
		\item \label{cond7} $f \in C^{2 n - 3}(a, b)$, $f^{(2 n - 3)}$ is convex and the matrix
		\begin{align*}
			\left(\frac{f^{(i + j - 1)}(t)}{(i + j - 1)!}\right)_{1 \leq i, j \leq n}
		\end{align*}
		exists and is positive almost everywhere.
	\end{enumerate}
\end{lause}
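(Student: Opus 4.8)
The plan is to establish the eight conditions as equivalent through a web of implications, grouping them so that each link uses exactly one of the tools already available. Two cheap links come first. Conditions \ref{cond1} and \ref{cond2} are equivalent by Theorem \ref{main_monotone}. For \ref{cond2} $\Leftrightarrow$ \ref{cond3}, observe that $g \mapsto [x_0,\ldots,x_{2n-1}]_{fg}$ is linear, so by Lemma \ref{polynomial_lemma} the cone generated by $\{q^2 : q \in \R_{n-1}[x]\}$ and the set $\{N(q) : q \in \C_{n-1}[x]\}$ coincide, both being the non-negative polynomials of degree at most $2n-2$; hence non-negativity of all $q^2$-differences is the same as non-negativity of all $N(q)$-differences.

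Next I would tie together the Loewner-type conditions \ref{cond4}, \ref{cond5}, \ref{cond55}. Condition \ref{cond1} $\Leftrightarrow$ \ref{cond5} is exactly Loewner's Theorem \ref{basic_mon}, which also supplies the regularity $f \in C^1$ needed to make the confluent differences and Loewner matrices meaningful. The key computation is that the confluent difference of \ref{cond4} expands, through the pairing $\langle\cdot,\cdot\rangle_L$, as a Loewner quadratic form: writing $P(z)=\prod_{i=1}^n(z-x_i)$ and $a_i = q(x_i)/P'(x_i)$, the shift identity (\ref{formal_shift}) together with the partial fraction expansion $N(q)/P^2 = \sum_{i,j} a_i \bar a_j/((\cdot-x_i)(\cdot-x_j))$ and (\ref{formal_divided}), (\ref{formal_shift_der}) give
\begin{align*}
[x_1,x_1,\ldots,x_n,x_n]_{fN(q)} = \left\langle f, \frac{N(q)}{P^2}\right\rangle_L = \sum_{i,j=1}^n a_i \bar a_j [x_i,x_j]_f.
\end{align*}
Since $q \mapsto (a_i)_i$ is a bijection onto $\C^n$, non-negativity of the left side for all $q$ is precisely positivity of the Loewner matrix, giving \ref{cond4} $\Leftrightarrow$ \ref{cond5}. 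Redoing the computation in the Newton basis $\prod_{k>j}(\cdot-x_k)$ in place of the Lagrange basis rewrites the same confluent difference as the quadratic form of the extended Loewner matrix, yielding \ref{cond4} $\Leftrightarrow$ \ref{cond55}.

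For the derivative and Dobsch conditions I would work through the $k$-tone dictionary. Condition \ref{cond3} says $fN(q)$ is $(2n-1)$-tone for every $q$, so by Lemma \ref{k_tone_regularity} it is equivalent to $f \in C^{2n-3}$, $f^{(2n-3)}$ convex, and $(fN(q))^{(2n-3)}$ convex for all $q$. Comparing with \ref{cond6}, the content is that, under these regularity hypotheses, convexity of $(fN(q))^{(2n-3)}$ is the same as $(fN(q))^{(2n-1)} \ge 0$ almost everywhere. The forward direction is immediate; the converse is the delicate point, since $(fN(q))^{(2n-2)}$ may carry a singular part inherited from $f^{(2n-2)}$. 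I would resolve this by computing the distributional derivative of $(fN(q))^{(2n-2)}$: its absolutely continuous part is $(fN(q))^{(2n-1)}\ge 0$ by hypothesis, while its singular part equals $N(q)$ times the non-negative singular part of the measure $d f^{(2n-2)}$, hence $\ge 0$ because $N(q)\ge 0$ and $f^{(2n-3)}$ is convex; thus $(fN(q))^{(2n-2)}$ is non-decreasing. Finally, \ref{cond6} $\Leftrightarrow$ \ref{cond7} follows from the pointwise Taylor identity $(fN(q))^{(2n-1)}(t)/(2n-1)! = \sum_{i,j}\beta_i \bar\beta_j f^{(i+j-1)}(t)/(i+j-1)!$ with $\beta_i = q^{(n-i)}(t)/(n-i)!$, valid wherever the derivatives exist; as $\beta$ ranges over $\C^n$ this matches $(fN(q))^{(2n-1)}\ge 0$ with positivity of the Dobsch matrix $M(t)$.

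The main obstacle I anticipate is not the algebra but the regularity bookkeeping in the last two links: tracking the exact order of differentiability at each stage (only $C^{2n-3}$ is available, with $f^{(2n-2)}$ and $f^{(2n-1)}$ existing merely almost everywhere), handling the singular part of the second-derivative measure as above, and exchanging the ``for all $q$'' and ``almost every $t$'' quantifiers in \ref{cond6} and \ref{cond7} — which I would do by testing the Dobsch quadratic form against a fixed countable dense family of polynomials $q$ and invoking continuity of $\beta \mapsto \beta^{*} M(t)\beta$. The Frobenius and partial-fraction identities are the conceptual core, but become essentially mechanical once phrased through the formal pairing.
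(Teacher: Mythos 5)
Your proposal is correct, but it is wired differently from the paper at one structural point. The paper never invokes Loewner's Theorem \ref{basic_mon}: it bridges the distinct-point condition (\ref{cond3}) and the confluent condition (\ref{cond4}) directly, getting (\ref{cond3}) $\Rightarrow$ (\ref{cond4}) from Theorem \ref{main_monotone} plus the regularity Corollary \ref{mon_reg}, and (\ref{cond4}) $\Rightarrow$ (\ref{cond3}) from a collapsing argument based on the mean value theorem for divided differences (Lemma \ref{divided_basic}(ii)): for $f \in C^{1}$, any $[x_{0}, \ldots, x_{2n-1}]_{f}$ at distinct points equals some $[y_{1}, y_{1}, \ldots, y_{n}, y_{n}]_{f}$, obtained by merging points two at a time. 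The Loewner conditions (\ref{cond5}) and (\ref{cond55}) then hang off (\ref{cond4}) by the same partial-fraction computation you give. You instead import Loewner's theorem as a black box for (\ref{cond1}) $\Leftrightarrow$ (\ref{cond5}) and recover (\ref{cond4}) from (\ref{cond5}). This is logically sound, since Theorem \ref{basic_mon} is stated in the paper as a known external result and does not depend on Theorem \ref{monotone_full}, but it buys less: the paper's route makes Theorem \ref{monotone_full} a self-contained new proof of Loewner's equivalence (\ref{cond1}) $\Leftrightarrow$ (\ref{cond5}) (which is precisely the point stressed after the proof), whereas your route presupposes it. Two smaller remarks: (a) Theorem \ref{basic_mon} quantifies over arbitrary tuples $\lambda_{i}$ while (\ref{cond5}) only over distinct ordered ones, so you owe a one-line continuity remark (available since $f \in C^{1}$); (b) in your (\ref{cond6}) $\Rightarrow$ (\ref{cond3}) argument — which is the same Donoghue-style decomposition the paper carries out, only in the language of distributional derivatives rather than divided differences — the identification of the absolutely continuous part of the measure $D\bigl((f N(q))^{(2n-2)}\bigr)$ with $(f N(q))^{(2n-1)}\,dt$ requires knowing $(f N(q))^{(2n-2)}$ is locally of bounded variation, which is exactly what the paper's lower bound on the divided differences of $g' = (f N(q))^{(2n-2)}$ delivers; your Leibniz expansion yields this too, but it should be said. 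On the plus side, your countable-dense-family argument for exchanging ``for all $q$'' with ``almost every $t$'' in (\ref{cond6}) $\Leftrightarrow$ (\ref{cond7}) addresses a point the paper glosses over silently, and your (\ref{cond2}) $\Leftrightarrow$ (\ref{cond3}) via Lemma \ref{polynomial_lemma} coincides with what the paper does inside the proof of Theorem \ref{main_monotone}.
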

\begin{proof}
	$(\ref{cond1}) \Leftrightarrow (\ref{cond2}) \Leftrightarrow (\ref{cond3})$ follows from the proof of Theorem \ref{main_monotone}.

	$(\ref{cond3}) \Rightarrow (\ref{cond4})$ follows from Theorem \ref{main_monotone} and Corollary \ref{mon_reg}.

	To prove $(\ref{cond4}) \Rightarrow (\ref{cond3})$ we show what if $f \in C^{1}(a, b)$, then for any $a < x_{0} < x_{1} < \ldots < x_{2 n - 1} < b$ we may find $a < y_{1} < y_{2} < \ldots < y_{n} < b$ such that
	\begin{align*}
		[x_{0}, x_{1}, \ldots, x_{2 n - 1}]_{f} = [y_{1}, y_{1}, \ldots, y_{n}, y_{n}]_{f}.
	\end{align*}
	To this end apply first mean value theorem \ref{divided_basic} (ii) to function $g : x \mapsto [x, x_{2}, x_{3}, \ldots, x_{2 n - 1}]_{f}$ to find $x_{0} < y_{1} < x_{1}$ with
	\begin{align*}
		[x_{0}, x_{1}, \ldots, x_{2 n - 1}]_{f} = [x_{0}, x_{1}]_{g} = [y_{1}, y_{1}]_{g} = [y_{1}, y_{1}, x_{2}, \ldots, x_{2 n - 1}]_{f}
	\end{align*}
	Now apply the same trick inductively to $x \mapsto [x, y_{1}, y_{1}]_{f} \in C^{1}(y_{1}, b)$. This implies the claim.

	To prove $(\ref{cond4}) \Leftrightarrow (\ref{cond5})$, note that for any $a < x_{1} < x_{2} < \ldots < x_{n} < b$ and $c_{1}, c_{2}, \ldots, c_{n} \in \C$ one has
	\begin{align*}
		\sum_{i, j = 1}^{n} \frac{c_{i} \overline{c_{j}}}{(\cdot - x_{i}) (\cdot - x_{j})} &= \frac{\left(\sum_{i = 1}^{n} c_{i} \prod_{j \neq i} (\cdot - x_{j})\right) \left(\sum_{i = 1}^{n} \overline{c_{i}} \prod_{j \neq i} (\cdot - x_{j})\right)}{\prod_{i = 1}^{n}(\cdot - x_{i})^{2}} =: \frac{N(q)}{\prod_{i = 1}^{n}(\cdot - x_{i})^{2}}.
	\end{align*}
	As in the proof of Lemma \ref{main_identity} as $c_{i}$'s range over $\C$, $q$ ranges over $\C_{n - 1}[x]$. Applying $\langle f, \cdot\rangle_{L}$ to both sides gives the claim.

	For $(\ref{cond4}) \Leftrightarrow (\ref{cond55})$, after noting
	\begin{align*}
		& \sum_{i, j = 1}^{n} \frac{c_{i} \overline{c_{j}}}{(\cdot - x_{1}) (\cdot - x_{2}) \cdots (\cdot - x_{i}) (\cdot - x_{1}) (\cdot - x_{2}) \cdots (\cdot - x_{j})} \\
		=& \frac{\left(\sum_{i = 1}^{n} c_{i} \prod_{j > i} (\cdot - x_{j})\right) \left(\sum_{i = 1}^{n} \overline{c_{i}} \prod_{j > i} (\cdot - x_{j})\right)}{\prod_{i = 1}^{n}(\cdot - x_{i})^{2}} =: \frac{N(q)}{\prod_{i = 1}^{n}(\cdot - x_{i})^{2}}
	\end{align*}
	the rest follows as in the $(\ref{cond4}) \Leftrightarrow (\ref{cond5})$.

	$(\ref{cond3}) \Rightarrow (\ref{cond6})$ follows immediately from Theorem \ref{main_monotone} and Corollary \ref{mon_reg}.

	To prove $(\ref{cond6}) \Rightarrow (\ref{cond3})$ first fix $q \in \C_{n - 1}[x]$. In view of Corollary \ref{mon_reg} we should prove that $f N(q) \in C^{2 n - 3}$ and $g := (f N(q))^{(2 n - 3)}$ is convex. The regularity part is clear from the assumption. For the convexity of $g$ note that since $f^{2n - 3}$ is convex, it is absolutely continuous locally (on every closed subinterval of $(a, b)$) and hence is also $g$. It hence suffices to verify that $g'$ (which exist almost everywhere) is increasing. Note that
	\begin{align*}
		g'(x) = f^{(2 n - 2)}(x) N(q)(x) + (2 n - 2) f^{(2 n - 3)}(x) (N(q))'(x) + (\text{something $C^{1}$}).
	\end{align*}
	We claim that the divided differences of $g'$ are bounded locally from below. Indeed, since $f^{(2 n - 3)}(x)$ is convex, its divided differences are locally bounded, and so are those of $(2 n - 2) f^{(2 n - 3)}(x) (N(q))'(x)$. The last term is clear. For the first term note that since $f^{(2 n - 2)}$ is increasing
	\begin{align*}
		[x, y]_{f^{(2 n - 2)} N(q)} = N(q)(x) [x, y]_{f^{(2 n - 2)}} + [x, y]_{N(q)} f^{(2 n - 2)}(y) \geq [x, y]_{N(q)} f^{(2 n - 2)}(y),
	\end{align*}
	which is locally bounded from below.

	It follows that on every closed subinteval of $(a, b)$ $g'$ can be written as a sum of an absolutely continuous function and increasing singular function. But since by the assumption derivative of $g'$ is non-negative, the absolutely continuous part is increasing and so is $g'$.

	Finally, to prove $(\ref{cond6}) \Leftrightarrow (\ref{cond7})$ note that for any $t$ and $c_{1}, c_{2}, \ldots, c_{n} \in \C$ one has
	\begin{align*}
		\sum_{i, j = 1}^{n} \frac{c_{i} \overline{c_{j}}}{(\cdot - t)^{i} (\cdot - t)^{j}} = \frac{\left(\sum_{i = 1}^{n} c_{i} (\cdot - t)^{n - i} \right) \left(\sum_{i = 1}^{n} \overline{c_{i}} (\cdot - t)^{n - i}\right)}{(\cdot - t)^{2 n}} =: \frac{N(q)}{(\cdot - t)^{2 n}}.
	\end{align*}
	Yet again, when $c_{1}, c_{2}, \ldots, c_{n} \in \C$ range over $\C$, $q$ ranges over $\C_{n - 1}[x]$. Applying $\langle f, \cdot\rangle_{L}$ to both sides gives the claim.
\end{proof}

Loewner originally proved in \cite{Low} $(\ref{cond1}) \Leftrightarrow (\ref{cond5})$ in the previous theorem. In \cite{Dob} Dobsch proved $(\ref{cond5}) \Rightarrow (\ref{cond7})$ and in \cite{Don} Donoghue proved the converse $(\ref{cond7}) \Rightarrow (\ref{cond5})$; our argument for $(\ref{cond6}) \Rightarrow (\ref{cond3})$ is a variant of the Donoghue's argument. The rest of the conditions are new.

\section{Convex case}\label{convex_sec}

The study of $n$-convex functions was initiated by Kraus in \cite{Kraus}, where the following result was proven:

\begin{lause}[Kraus]\label{basic_con}
	A function $f : (a, b) \to \R$ is $n$-convex (for $n \geq 2$) if and only if $f \in C^{2}(a, b)$ and the Kraus matrix
	\begin{align*}
	Kr = ([\lambda_{i}, \lambda_{j}, \lambda_{0}]_{f})_{1 \leq i, j \leq n}
	\end{align*}
	is positive for any tuple of numbers $(\lambda_{i})_{i = 1}^{n} \in (a, b)^{n}$ and $\lambda_{0} \in (\lambda_{i})_{i = 1}^{n}$.
\end{lause}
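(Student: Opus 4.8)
The plan is to mirror, in the convex setting, the chain $(\ref{cond3})\Leftrightarrow(\ref{cond4})\Leftrightarrow(\ref{cond5})$ from Theorem \ref{monotone_full}: I will show that positivity of all Kraus matrices is, after an algebraic rewriting and a confluence argument, literally the divided-difference condition of the characterisation of $n$-convexity (Theorem \ref{main_convex}), which I take as given. Throughout I use the formal pairing $\langle\,\cdot\,,\cdot\,\rangle_{L}$ together with (\ref{formal_divided}), (\ref{formal_shift}) and its higher-pole extension (\ref{formal_shift_der}).

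First I would record the algebraic identity behind the Kraus matrix, exactly as in the proof of Lemma \ref{main_identity}. Fixing distinct $\lambda_{1},\dots,\lambda_{n}$ and a base point $\lambda_{0}\in\{\lambda_{1},\dots,\lambda_{n}\}$, for $c\in\C^{n}$ partial fractions give
\begin{align*}
\sum_{i, j = 1}^{n} \frac{c_{i} \overline{c_{j}}}{(\cdot - \lambda_{0})(\cdot - \lambda_{i})(\cdot - \lambda_{j})} = \frac{N(q)}{(\cdot - \lambda_{0}) \prod_{i = 1}^{n} (\cdot - \lambda_{i})^{2}}, \qquad q := \sum_{i = 1}^{n} c_{i} \prod_{j \neq i} (\cdot - \lambda_{j}),
\end{align*}
where the polynomials $\prod_{j\neq i}(\cdot-\lambda_{j})$ are linearly independent, so $q$ ranges over all of $\C_{n-1}[x]$ as $c$ ranges over $\C^{n}$. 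Pairing with $f$ and using (\ref{formal_shift}) turns the Kraus quadratic form $\sum_{i,j}c_{i}\overline{c_{j}}[\lambda_{i},\lambda_{j},\lambda_{0}]_{f}$ into a single confluent divided difference of $fN(q)$; since $\lambda_{0}$ equals one of the $\lambda_{i}$, this divided difference (order $2n$) has one triple node and $n-1$ double nodes, so after relabelling $\lambda_{0}=\lambda_{1}$ positivity of all Kraus matrices is equivalent to
\begin{align*}
[\lambda_{1}, \lambda_{1}, \lambda_{1}, \lambda_{2}, \lambda_{2}, \ldots, \lambda_{n}, \lambda_{n}]_{f N(q)} \geq 0
\end{align*}
for all distinct $\lambda_{1},\dots,\lambda_{n}$, all choices of base node, and all $q\in\C_{n-1}[x]$. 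By Lemma \ref{polynomial_lemma} the weights $N(q)$, $q\in\C_{n-1}[x]$, are exactly the nonnegative polynomials of degree at most $2n-2$, i.e. the sums $q_{1}^{2}+q_{2}^{2}$ with $q_{i}\in\R_{n-1}[x]$, so by linearity of $f\mapsto[\,\cdot\,]_{fp}$ this matches the $q^{2}$-phrasing of Theorem \ref{main_convex}.

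It remains to pass between this confluent condition and the condition of Theorem \ref{main_convex} for $2n+1$ pairwise distinct nodes, which I would do with the mean value theorem for divided differences (Lemma \ref{divided_basic} (ii)), exactly as in the proof of $(\ref{cond4})\Leftrightarrow(\ref{cond3})$. For ``$\Leftarrow$'' ($C^{2}$ and Kraus $\Rightarrow$ $n$-convex): given distinct $x_{0}<\dots<x_{2n}$, I collapse them by repeatedly applying the theorem to a divided difference with a single free argument, merging adjacent points into $n-1$ double nodes (one first-order mean value step each) and three of them into one triple node (a second-order step); such a free-argument divided difference is $C^{2}$ in its argument precisely when $f$ is, so the assumed $C^{2}$ is exactly enough, and the collapsed value equals $[x_{0},\dots,x_{2n}]_{fN(q)}$ for the given $q$. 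As the collapsed value is nonnegative by the Kraus hypothesis, Theorem \ref{main_convex} yields $n$-convexity. For ``$\Rightarrow$'': $n$-convexity gives $f\in C^{2n-2}\subseteq C^{2}$ by Corollary \ref{regularity} (here $n\ge2$), the confluent divided difference has maximal node multiplicity $3$, so it is the continuous limit of the nonnegative distinct-node divided differences furnished by Theorem \ref{main_convex}, hence nonnegative; this gives positivity of every Kraus matrix, and the $C^{2}$ regularity is again Corollary \ref{regularity}.

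The main obstacle I anticipate is the regularity bookkeeping around the base point $\lambda_{0}$. Because $\lambda_{0}$ is taken among the $\lambda_{i}$, the associated node is triple rather than double, so the confluence consumes two derivatives rather than one; I must check that this is matched exactly by the $C^{2}$ hypothesis (and by the $C^{2n-2}$ regularity in the converse), and that multiplying by the weight $N(q)$ neither raises the required smoothness nor disturbs the partial-fraction and shift identities once the poles are no longer simple. Verifying that the higher-pole extension (\ref{formal_shift_der}) of the pairing still obeys the shift rule (\ref{formal_shift}) used to move $N(q)$ across the bracket, and that the correspondence ``$c$ ranges over $\C^{n}$ $\Leftrightarrow$ $q$ ranges over $\C_{n-1}[x]$'' persists in the confluent configuration, are the points needing the most care.
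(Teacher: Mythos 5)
There is a genuine gap here, and it is one of logical architecture rather than computation: your argument is circular relative to this paper. You take Theorem \ref{main_convex} as given and derive Kraus's characterization from it. But the paper never proves Theorem \ref{main_convex} independently of the statement you are asked to prove: Theorem \ref{main_convex} is precisely the equivalence $(\ref{cond1_c}) \Leftrightarrow (\ref{cond2_c})$ of Theorem \ref{convex_full}, and the proof of Theorem \ref{convex_full} says explicitly that, because there is no convex analogue of Lemma \ref{main_identity} (one cannot simultaneously control the eigenvalues of the three matrices $A$, $B$ and $tA + (1-t)B$), the connection between $n$-convexity and all of the divided-difference conditions is made by invoking Theorem \ref{basic_con} itself --- imported by citation from \cite{Kraus} (see also \cite{Hiai}), not proven --- to link $(\ref{cond1_c})$ to $(\ref{cond6_c})$. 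The same objection applies to your use of Corollary \ref{regularity} in the forward direction: its convex half is deduced from Theorem \ref{main_convex}, hence again sits downstream of Kraus's theorem. Within this paper the dependency runs Kraus $\Rightarrow$ Theorem \ref{main_convex}; your proposal runs it backwards.

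What survives of your proposal is its algebraic middle: rewriting the Kraus quadratic form $\sum_{i, j} c_{i} \overline{c_{j}} [\lambda_{i}, \lambda_{j}, \lambda_{0}]_{f}$ as the confluent divided difference $[\lambda_{1}, \lambda_{1}, \lambda_{1}, \lambda_{2}, \lambda_{2}, \ldots, \lambda_{n}, \lambda_{n}]_{f N(q)}$, the correspondence between $c \in \C^{n}$ and $q \in \C_{n - 1}[x]$, and the mean-value collapsing between confluent and pairwise distinct nodes are all sound, and they are essentially the paper's own proofs of the equivalences among conditions $(\ref{cond2_c})$ through $(\ref{cond7_c})$ of Theorem \ref{convex_full}. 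What is missing is the genuinely matrix-theoretic step tying any of these scalar inequalities to the operator inequality $f(t A + (1 - t) B) \leq t f(A) + (1 - t) f(B)$; in the monotone case that step is Lemma \ref{main_identity} (projection pairs), and the paper states at the start of Section \ref{convex_sec} that no such tool is available for convexity --- which is exactly why Kraus's theorem is used as an external anchor there. A non-circular proof must therefore either reproduce Kraus's original argument (for instance a second-order perturbation analysis of $t \mapsto \langle f(A + t v v^{*}) w, w \rangle$) or supply the missing convex analogue of Lemma \ref{main_identity}; your proposal does neither.
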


Akin to the monotone case, also this result has a local variant.

\begin{lause}\label{hankel_con}
A $C^{2 n}$ function $f : (a, b) \to \R$ is $n$-convex if and only if the Hankel matrix
\begin{align*}
	K(t) = \left(\frac{f^{(i + j)}(t)}{(i + j)!}\right)_{1 \leq i, j \leq n}
\end{align*}
is positive for any $t \in (a, b)$.
\end{lause}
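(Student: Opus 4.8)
The plan is to read off Theorem \ref{hankel_con} as the convex analogue of the equivalence $(\ref{cond1}) \Leftrightarrow (\ref{cond7})$ in Theorem \ref{monotone_full}, the only structural change being that the relevant divided differences now have order $2 n$ rather than $2 n - 1$, and that the regularity $f \in C^{2 n}$ is assumed from the outset. Concretely, I would first translate positivity of the Hankel matrix into a statement about a single high-order derivative of $f N(q)$, exactly as in the step $(\ref{cond6}) \Leftrightarrow (\ref{cond7})$. For fixed $t$ and $c_{1}, \ldots, c_{n} \in \C$, setting $q(x) = \sum_{i = 1}^{n} c_{i} (x - t)^{n - i} \in \C_{n - 1}[x]$ and expanding gives the partial fraction identity
\begin{align*}
	\sum_{i, j = 1}^{n} \frac{c_{i} \overline{c_{j}}}{(\cdot - t)^{i + j + 1}} = \frac{N(q)}{(\cdot - t)^{2 n + 1}},
\end{align*}
so that, applying $\langle f, \cdot\rangle_{L}$ together with (\ref{formal_shift}) and (\ref{formal_shift_der}),
\begin{align*}
	\sum_{i, j = 1}^{n} c_{i} \overline{c_{j}} \frac{f^{(i + j)}(t)}{(i + j)!} = \left\langle f, \frac{N(q)}{(\cdot - t)^{2 n + 1}} \right\rangle_{L} = \frac{(f N(q))^{(2 n)}(t)}{(2 n)!}.
\end{align*}
As $c$ ranges over $\C^{n}$ the polynomial $q$ ranges over all of $\C_{n - 1}[x]$, so $K(t)$ is positive for every $t \in (a, b)$ if and only if $(f N(q))^{(2 n)} \geq 0$ on $(a, b)$ for every $q \in \C_{n - 1}[x]$.

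Next I would convert this pointwise derivative condition into a divided-difference condition. Since $f \in C^{2 n}$ and $N(q)$ is a polynomial, $f N(q) \in C^{2 n}$, and by the mean value property of Lemma \ref{divided_basic} (\ref{cond2_divided}) every order-$2 n$ divided difference of $f N(q)$ equals $(f N(q))^{(2 n)}(\xi)/(2 n)!$ for some interior $\xi$, while the confluent difference recovers the derivative itself. Hence $(f N(q))^{(2 n)} \geq 0$ throughout $(a, b)$ if and only if $[x_{0}, x_{1}, \ldots, x_{2 n}]_{f N(q)} \geq 0$ for all $(2 n + 1)$-tuples of points, i.e. $f N(q)$ is $2 n$-tone. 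Finally, Lemma \ref{polynomial_lemma} identifies the polynomials $N(q)$ with $q \in \C_{n - 1}[x]$ as exactly the sums $q_{1}^{2} + q_{2}^{2}$ with $q_{1}, q_{2} \in \R_{n - 1}[x]$, and since divided differences are linear in the function, the family of conditions $[x_{0}, \ldots, x_{2 n}]_{f N(q)} \geq 0$ over all $q \in \C_{n - 1}[x]$ coincides with the family $[x_{0}, \ldots, x_{2 n}]_{f q^{2}} \geq 0$ over all $q \in \R_{n - 1}[x]$. By Theorem \ref{main_convex} the latter is precisely $n$-convexity, which closes the loop.

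Because the hypothesis already supplies $f \in C^{2 n}$, none of the delicate absolute-continuity bookkeeping that appeared in $(\ref{cond6}) \Rightarrow (\ref{cond3})$ of Theorem \ref{monotone_full} is needed here: the mean value theorem applies verbatim to the genuinely $C^{2 n}$ function $f N(q)$. The only points demanding care are routine: checking the degree bookkeeping so that the shift (\ref{formal_shift}) is legitimate (here $N(q)$ has degree at most $2 n - 2$, while $1/(\cdot - t)^{2 n + 1}$ vanishes to order $2 n + 1 \geq 2 n - 1$ at infinity), and confirming that the $t$-centered monomials $(x - t)^{n - i}$ indeed form a basis of $\C_{n - 1}[x]$ so that $q$ sweeps the whole space. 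I do not expect a genuine obstacle; the content is entirely carried by Theorem \ref{main_convex}, with the Hankel form arising as the confluent specialization of its divided-difference criterion.
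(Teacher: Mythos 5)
Your proof is correct and follows essentially the same route as the paper: Theorem \ref{hankel_con} is the $C^{2 n}$ specialization of the equivalence (\ref{cond1_c}) $\Leftrightarrow$ (\ref{cond9_c}) in Theorem \ref{convex_full}, and your three ingredients---the partial-fraction identity converting Hankel positivity into $(f N(q))^{(2 n)} \geq 0$, the mean value theorem for divided differences, and Lemma \ref{polynomial_lemma} feeding into Theorem \ref{main_convex}---are exactly the ones the paper's proof (carried over from the argument for Theorem \ref{monotone_full}) uses. Note only that your appeal to Theorem \ref{main_convex} is not circular, because the paper establishes it, i.e. (\ref{cond1_c}) $\Leftrightarrow$ (\ref{cond2_c}) of Theorem \ref{convex_full}, via Kraus's Theorem \ref{basic_con} rather than via Theorem \ref{hankel_con}.
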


The case $n = \infty$ of the previous was proved in \cite{Ben}; The ``only if" -direction was proved in \cite{Tom}, where also the full result was conjectured and proved in the case $n = 2$ (see also \cite{Tom2}); the full Theorem was proved in \cite{Heina}.

Many of the ideas used for monotone functions translate directly to convex case. Main obstacle is that tools like Lemma \ref{main_identity} are off-limits: one cannot simultaneusly control eigenvalues of three different maps. One can however still connect Theorems \ref{basic_con}, \ref{hankel_con} and \ref{main_convex} in the following manner.

\begin{lause}\label{convex_full}
	Let $k \geq 2$, $(a, b) \subset \R$ and $f : (a, b) \to \R$. Then the following are equivalent.
	\begin{enumerate}
		\item \label{cond1_c} $f$ is $n$-convex.
		\item \label{cond2_c} For any $a < x_{0} < x_{1} < \ldots < x_{2 n} < b$ and $q \in \R_{n - 1}[x]$
		\begin{align*}
			[x_{0}, x_{1}, \ldots, x_{2 n}]_{f q^2} \geq 0.
		\end{align*}
		\item \label{cond3_c} For any $a < x_{0} < x_{1} < \ldots < x_{2 n - 1} < b$ and $q \in \C_{n - 1}[x]$
		\begin{align*}
			[x_{0}, x_{1}, \ldots, x_{2 n}]_{f N(q)} \geq 0.
		\end{align*}
		\item \label{cond4_c} $f \in C^{2}(a, b)$ and for any $a < x_{1} < x_{2} < \ldots < x_{n} < b$, $1 \leq l \leq n$ and $q \in \C_{n - 1}[x]$
		\begin{align*}
			[x_{1}, x_{1}, x_{2}, x_{2}, \ldots, x_{n}, x_{n}, x_{l}]_{f N(q)} \geq 0.
		\end{align*}
		\item \label{cond5_c} $f \in C^{2}(a, b)$ and for any $a < x_{1} < x_{2} < \ldots < x_{n} < b$, $x_{0} \in (a, b)$ and $q \in \C_{n - 1}[x]$
		\begin{align*}
			[x_{1}, x_{1}, x_{2}, x_{2}, \ldots, x_{n}, x_{n}, x_{0}]_{f N(q)} \geq 0.
		\end{align*}
		\item \label{cond6_c} $f \in C^{2}(a, b)$ and for any $a < x_{1} < x_{2} < \ldots < x_{n} < b$, $1 \leq l \leq n$ the matrix
		\begin{align*}
			([x_{i}, x_{j}, x_{l}]_{f})_{1 \leq i, j \leq n}
		\end{align*}
		is positive.
		\item \label{cond7_c} $f \in C^{2}(a, b)$ and for any $a < x_{1} < x_{2} < \ldots < x_{n} < b$, $x_{0} \in (a, b)$ the matrix
		\begin{align*}
			([x_{i}, x_{j}, x_{0}]_{f})_{1 \leq i, j \leq n}
		\end{align*}
		is positive.
		\item \label{cond8_c} $f \in C^{2 n - 2}(a, b)$, $f^{(2 n - 2)}$ is convex and $(f N(q))^{(2 n)}$ exists almost everywhere and is non-negative, for every $q \in \C_{n - 1}[x]$.
		\item \label{cond9_c} $f \in C^{2 n - 2}(a, b)$, $f^{(2 n - 2)}$ is convex and the matrix
		\begin{align*}
			\left(\frac{f^{(i + j)}(t)}{(i + j)!}\right)_{1 \leq i, j \leq n}
		\end{align*}
		exists and is positive almost everywhere.
	\end{enumerate}
\end{lause}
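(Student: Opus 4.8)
The plan is to follow the architecture of the proof of Theorem \ref{monotone_full}, replacing each tool by its convex counterpart, and to fall back on Kraus' Theorem \ref{basic_con} at the single point where the monotone machinery has no analogue. First I would dispose of the divided-difference core $(\ref{cond1_c}) \Leftrightarrow (\ref{cond2_c}) \Leftrightarrow (\ref{cond3_c})$. The equivalence $(\ref{cond1_c}) \Leftrightarrow (\ref{cond2_c})$ is precisely Theorem \ref{main_convex}, while $(\ref{cond2_c}) \Leftrightarrow (\ref{cond3_c})$ follows from Lemma \ref{polynomial_lemma} together with the linearity of the pairing $\langle f, \cdot \rangle_{L}$: by that lemma both the real squares $q^{2}$ (with $q \in \R_{n - 1}[x]$) and the forms $N(q)$ (with $q \in \C_{n - 1}[x]$) generate, as a cone, exactly the non-negative polynomials of degree at most $2 n - 2$, so the two families of inequalities impose the same constraints on $f$. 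Taking $q = 1$ in $(\ref{cond3_c})$ shows that $f$ is $2 n$-tone, and more generally that $f N(q)$ is $2 n$-tone for every $q$; by Lemma \ref{k_tone_regularity} (equivalently Corollary \ref{regularity}) this yields $f N(q) \in C^{2 n - 2}$ with $(f N(q))^{(2 n - 2)}$ convex, in particular $f \in C^{2}$. This is the regularity that supplies the $C^{2}$ hypotheses in the remaining conditions.

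Next I would turn each matricial condition into a confluent divided difference by the partial-fraction identities already displayed in the statement. Summing the Kraus matrix against $c_{i} \overline{c_{j}}$ gives
\begin{align*}
\sum_{i, j = 1}^{n} c_{i} \overline{c_{j}} [x_{i}, x_{j}, x_{0}]_{f} = \left\langle f, \frac{N(q)}{(\cdot - x_{0}) \prod_{i = 1}^{n} (\cdot - x_{i})^{2}} \right\rangle_{L} = [x_{1}, x_{1}, \ldots, x_{n}, x_{n}, x_{0}]_{f N(q)},
\end{align*}
where $q = \sum_{i} c_{i} \prod_{j \neq i}(\cdot - x_{j})$ ranges over $\C_{n - 1}[x]$ as the $c_{i}$ do, exactly as in the proof of Lemma \ref{main_identity}. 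Specializing $x_{0} = x_{l}$ yields $(\ref{cond4_c}) \Leftrightarrow (\ref{cond6_c})$, leaving $x_{0}$ free yields $(\ref{cond5_c}) \Leftrightarrow (\ref{cond7_c})$, and the fully confluent identity with all poles at a single point $t$ yields $(\ref{cond8_c}) \Leftrightarrow (\ref{cond9_c})$.

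I would then attach the free-node condition to the core exactly as $(\ref{cond3}) \Leftrightarrow (\ref{cond4})$ was handled in Theorem \ref{monotone_full}. For $(\ref{cond3_c}) \Rightarrow (\ref{cond5_c})$, since $f N(q) \in C^{2}$ the confluent difference $[x_{1}, x_{1}, \ldots, x_{n}, x_{n}, x_{0}]_{f N(q)}$ (whose multiplicities never exceed $3$) is a limit of divided differences over $2 n + 1$ distinct points, each non-negative by $(\ref{cond3_c})$. For $(\ref{cond5_c}) \Rightarrow (\ref{cond3_c})$, given distinct $x_{0} < \cdots < x_{2 n}$ I would apply the mean-value collapse of Lemma \ref{divided_basic} (ii) $n$ times, coalescing the pairs $(x_{0}, x_{1}), (x_{2}, x_{3}), \ldots, (x_{2 n - 2}, x_{2 n - 1})$ into double nodes $y_{1}, \ldots, y_{n}$ and leaving $x_{2 n}$ single, so that
\begin{align*}
[x_{0}, x_{1}, \ldots, x_{2 n}]_{f N(q)} = [y_{1}, y_{1}, \ldots, y_{n}, y_{n}, x_{2 n}]_{f N(q)} \geq 0.
\end{align*}
The crucial point is that the left-over node is arbitrary, so the collapse lands in type $(\ref{cond5_c})$ and \emph{not} in type $(\ref{cond4_c})$. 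Finally $(\ref{cond3_c}) \Leftrightarrow (\ref{cond8_c})$ is the convex twin of $(\ref{cond3}) \Leftrightarrow (\ref{cond6})$: the forward direction is the regularity above, and the converse is the Donoghue-type argument, writing $\left( (f N(q))^{(2 n - 2)} \right)'$ as the sum of an absolutely continuous and an increasing singular part and using $(f N(q))^{(2 n)} \geq 0$ almost everywhere to force the absolutely continuous part, hence $(f N(q))^{(2 n - 2)}$, to be convex.

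At this stage every condition is linked except that I only have $(\ref{cond5_c}) \Rightarrow (\ref{cond4_c})$ (specialize $x_{0} = x_{l}$); the reverse $(\ref{cond4_c}) \Rightarrow (\ref{cond5_c})$ is the genuine difficulty, asserting that positivity of the Kraus matrix with its auxiliary node constrained to lie among $x_{1}, \ldots, x_{n}$ already forces positivity for every auxiliary node. This is exactly where the monotone strategy breaks down: there is no convex analogue of Lemma \ref{main_identity}, because one cannot simultaneously prescribe the spectra of the three operators $A$, $B$ and $t A + (1 - t) B$. I would close the loop by invoking Kraus' Theorem \ref{basic_con}: condition $(\ref{cond6_c})$ is verbatim Kraus' criterion, so $(\ref{cond6_c}) \Leftrightarrow (\ref{cond1_c})$, and this one link completes the chain, after which the free-node flexibility $(\ref{cond6_c}) \Leftrightarrow (\ref{cond7_c})$ drops out as a corollary of all nine conditions being equivalent. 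A self-contained alternative would start from the Newton identity $[x_{1}, x_{1}, \ldots, x_{n}, x_{n}, x_{0}]_{g} = (g(x_{0}) - p(x_{0})) / \prod_{i}(x_{0} - x_{i})^{2}$, with $p$ the Hermite interpolant of $g = f N(q)$ at the double nodes, which recasts $(\ref{cond4_c})$ as ``$g$ touches $p$ from above at each node'' and $(\ref{cond5_c})$ as ``$g \geq p$ throughout $(a, b)$''; but deducing the global inequality from the local one is precisely Kraus' regularity bootstrapping, and I expect no essential shortcut around Theorem \ref{basic_con} here.
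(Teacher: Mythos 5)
There is a genuine gap, and it sits exactly at the point you yourself flagged as ``the genuine difficulty.'' Your link $(\ref{cond1_c}) \Leftrightarrow (\ref{cond2_c})$ is circular: you cite Theorem \ref{main_convex}, but in the paper Theorem \ref{main_convex} has no independent proof --- it is itself a corollary of the present theorem. The paper's own proof says explicitly that ``there is no direct result connecting $(\ref{cond1_c})$, $(\ref{cond2_c})$ and $(\ref{cond3_c})$,'' precisely because Lemma \ref{main_identity} has no convex analogue; that is why Kraus' Theorem \ref{basic_con} is brought in at all. Once the circular link is removed, your argument splits into two blocks joined only one way: Kraus ties $(\ref{cond1_c}) \Leftrightarrow (\ref{cond6_c}) \Leftrightarrow (\ref{cond4_c})$, while $(\ref{cond2_c}) \Leftrightarrow (\ref{cond3_c}) \Leftrightarrow (\ref{cond5_c}) \Leftrightarrow (\ref{cond7_c}) \Leftrightarrow (\ref{cond8_c}) \Leftrightarrow (\ref{cond9_c})$ form the other block, and the only bridge you actually construct is the specialization $(\ref{cond5_c}) \Rightarrow (\ref{cond4_c})$. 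Invoking Kraus does \emph{not} ``complete the chain'' as you claim: it connects two conditions already inside the first block and produces neither $(\ref{cond4_c}) \Rightarrow (\ref{cond5_c})$ nor $(\ref{cond1_c}) \Rightarrow (\ref{cond2_c})$, so with the circularity excised your proof establishes only that the second block implies the first.

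The missing idea is a second mean-value collapse that lands in type $(\ref{cond4_c})$ rather than $(\ref{cond5_c})$: given $f \in C^{2}$ (hence $f N(q) \in C^{2}$) and distinct $x_{0} < x_{1} < \ldots < x_{2n}$, apply the order-two mean value theorem of Lemma \ref{divided_basic} (ii) to $h : x \mapsto [x, x_{3}, \ldots, x_{2 n}]_{f N(q)}$, which is $C^{2}$ on $(a, x_{3})$, to coalesce the first \emph{three} nodes into a triple node $\xi \in (x_{0}, x_{2})$, so that $[x_{0}, x_{1}, \ldots, x_{2 n}]_{f N(q)} = [x_{0}, x_{1}, x_{2}]_{h} = [\xi, \xi, \xi, x_{3}, \ldots, x_{2 n}]_{f N(q)}$; then collapse the remaining pairs $(x_{3}, x_{4}), \ldots, (x_{2 n - 1}, x_{2 n})$ exactly as in your pair argument. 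The result is $[\xi, \xi, \xi, y_{2}, y_{2}, \ldots, y_{n}, y_{n}]_{f N(q)}$, which is condition $(\ref{cond4_c})$ with $l = 1$ (the free node coinciding with the smallest doubled node). This gives $(\ref{cond4_c}) \Rightarrow (\ref{cond3_c})$, closes the cycle without ever citing Theorem \ref{main_convex}, and makes Theorem \ref{main_convex} fall out as the corollary $(\ref{cond1_c}) \Leftrightarrow (\ref{cond2_c})$ --- which is how the paper's logic runs. Everything else in your proposal --- the partial-fraction dictionary for $(\ref{cond4_c}) \Leftrightarrow (\ref{cond6_c})$, $(\ref{cond5_c}) \Leftrightarrow (\ref{cond7_c})$ and $(\ref{cond8_c}) \Leftrightarrow (\ref{cond9_c})$, the pair collapse $(\ref{cond5_c}) \Rightarrow (\ref{cond3_c})$, Lemma \ref{polynomial_lemma} for $(\ref{cond2_c}) \Leftrightarrow (\ref{cond3_c})$, the Donoghue-style singular/absolutely continuous decomposition for $(\ref{cond8_c}) \Rightarrow (\ref{cond3_c})$, and the use of Kraus for $(\ref{cond1_c}) \Leftrightarrow (\ref{cond6_c})$ --- matches the paper's intended adaptation of Theorem \ref{monotone_full}.
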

\begin{proof}
	Proof is almost the same as that of Theorem \ref{monotone_full}. Main difference is that as there is no direct result connecting $(\ref{cond1_c})$, $(\ref{cond2_c})$ and $(\ref{cond3_c})$, Theorem \ref{basic_con} (proven in \cite{Kraus}; see also \cite{Hiai}) is used to connect $(\ref{cond1_c})$ instead to $(\ref{cond6_c})$.
\end{proof}

\section{Explanation of the integral representations}\label{integral_repr_section}

In \cite{Heina} following two results were proved.

\begin{lause}\label{monotone_integral_formula}
	Let $a < x_{1} < x_{2} < \ldots < x_{n} < b$. Then the following identity holds for any $f \in C^{2 n - 1}(a, b)$:
	\begin{align*}
		L(f, (x_{i})_{i = 1}^{n}) = \int_{-\infty}^{\infty} C(t, (x_{i})_{i = 1}^{n})^{T} M_{n}(t, f) C(t, (x_{i})_{i = 1}^{n}) I(t, (x_{i})_{i = 1}^{n}) d t.
	\end{align*}
	Here $C$ satisfies
	\begin{align*}
		\sum_{i = 1}^{n} C(t, (x_{i})_{i = 1}^{n})_{i, j} x^{i - 1} = \prod_{j \neq i} (1 + x (t - x_{i}))
	\end{align*}
	and $I(t, (x_{i})_{i = 1}^{n})$ is certain non-negative piecewise polynomial function supported on $[x_{1}, x_{n}]$. (See \cite{Heina} for details).
\end{lause}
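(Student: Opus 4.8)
The plan is to reduce the matrix identity to a family of scalar identities and to recognize both sides as confluent divided differences of one and the same auxiliary function $g = f N(q)$, the two being linked exactly by the Peano kernel representation. Since $L(f,(x_i))$ and $M_n(t,f)$ are Hermitian (indeed real symmetric) and $C$ is real, it suffices to check equality of the associated Hermitian forms: for every $c \in \C^{n}$ I would show
\begin{align*}
	c^{*} L(f,(x_i)_{i=1}^{n}) c = \int_{-\infty}^{\infty} (C c)^{*} M_n(t,f) (C c)\, I(t,(x_i)_{i=1}^{n})\, dt.
\end{align*}
Writing $q(z) = \sum_{i=1}^{n} c_i \prod_{j\neq i}(z - x_j) \in \C_{n-1}[x]$, the computation carried out in the proof of Theorem \ref{monotone_full} for $(\ref{cond4})\Leftrightarrow(\ref{cond5})$, together with (\ref{formal_divided}) and (\ref{formal_shift}), identifies the left-hand side with the confluent divided difference $[x_1,x_1,x_2,x_2,\ldots,x_n,x_n]_{f N(q)}$, a functional of order $2n-1$.

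The crux is that $C$ is engineered so that the \emph{same} polynomial $q$ governs the right-hand integrand. Put $w = C c$, so $w_k = \sum_i C_{k,i}(t) c_i$. By the computation proving $(\ref{cond6})\Leftrightarrow(\ref{cond7})$ combined with (\ref{formal_shift_der}), the integrand $(Cc)^{*} M_n(t,f)(Cc)$ equals $[t,t,\ldots,t]_{f N(Q_t)}$ (with $2n$ equal arguments $t$), where $Q_t(z) = \sum_k w_k (z-t)^{n-k}$. I claim $Q_t = q$ for every $t$. Indeed, the defining relation $\sum_k C_{k,j}(t)\,x^{k-1} = \prod_{m\neq j}(1 + x(t-x_m))$, after the substitution $x \mapsto (z-t)^{-1}$ and multiplication by $(z-t)^{n-1}$, becomes $\sum_k C_{k,j}(t)(z-t)^{n-k} = \prod_{m\neq j}(z-x_m)$; summing against $c_j$ gives $Q_t(z) = \sum_j c_j \prod_{m\neq j}(z-x_m) = q(z)$. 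Hence the integrand is precisely $[t,t,\ldots,t]_{g}$ with $g := f N(q)$, which by (\ref{formal_shift_der}) is $g^{(2n-1)}(t)/(2n-1)!$.

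It remains to tie the two confluent divided differences of the single function $g$ together. Since $f \in C^{2n-1}(a,b)$ and $N(q)$ is a polynomial, $g \in C^{2n-1}(a,b)$, so the order-$(2n-1)$ functional $\Lambda : g \mapsto [x_1,x_1,\ldots,x_n,x_n]_{g}$ admits a Peano kernel representation (see \cite{Boo}): $\Lambda(g) = \int_{-\infty}^{\infty} g^{(2n-1)}(t) K(t)\,dt$, where $K(t) = \frac{1}{(2n-2)!}\Lambda\big[(\cdot - t)_{+}^{2n-2}\big]$ is, up to normalization, the B-spline with knots $x_1,x_1,\ldots,x_n,x_n$; it is non-negative, piecewise polynomial, and supported on $[x_1,x_n]$. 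Substituting $g^{(2n-1)}(t) = (2n-1)!\,[t,\ldots,t]_{g}$ then gives
\begin{align*}
	c^{*} L(f,(x_i)) c = \Lambda(g) = (2n-1)!\int_{-\infty}^{\infty} [t,\ldots,t]_{g}\, K(t)\,dt = \int_{-\infty}^{\infty} (Cc)^{*} M_n(t,f)(Cc)\, I(t)\,dt,
\end{align*}
with $I := (2n-1)!\,K$, which has all the asserted properties. As $c$ was arbitrary and both sides are Hermitian forms, these identities assemble into the claimed matrix identity.

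The genuinely delicate point is the bookkeeping of the middle paragraph: the formula holds only because the Loewner form and the Dobsch form, once contracted, are confluent divided differences of \emph{one and the same} $f N(q)$, and the sole function of $C$ is to transport the parametrizing polynomial $q$ from the spread nodes $(x_i)$ to the confluent node $t$. Once this matching is secured, the integral representation is nothing but the Peano (B-spline) representation of an order-$(2n-1)$ divided difference, which simultaneously and transparently explains the non-negativity, the piecewise-polynomial character, and the support $[x_1,x_n]$ of $I$.
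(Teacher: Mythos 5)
Your proof is correct and follows essentially the same route as the paper's: both identify the Loewner data as confluent divided differences $[x_1,x_1,\ldots,x_n,x_n]$ of $f$ times polynomials transported by $C$ (via the substitution $x \mapsto (z-t)^{-1}$ in the defining relation of $C$), and then conclude by the Peano/B-spline representation of Lemma \ref{peano_representation} with doubled knots, identifying $I$ with that kernel. The only difference is bookkeeping: you polarize against a vector $c$ and work with $N(q)$, whereas the paper performs the identical computation entry-by-entry with the products $p_i p_j$.
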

\begin{lause}\label{convex_integral_formula}
	Let $a < x_{1} < x_{2} < \ldots < x_{n} < b$ and $a < x_{0} < b$. Then the following identity holds for any $f \in C^{2 n}(a, b)$:
	\begin{align*}
		Kr(f, (x_{i})_{i = 1}^{n}, x_{0}) = \int_{-\infty}^{\infty} C(t, (x_{i})_{i = 1}^{n})^{T} K_{n}(t, f) C(t, (x_{i})_{i = 1}^{n}) J_{x_{0}}(t, (x_{i})_{i = 1}^{n}) d t.
	\end{align*}
	Here $C$ is and in Theorem \ref{monotone_integral_formula} and $J_{x_{0}}(t, (x_{i})_{i = 1}^{n})$ is certain non-negative piecewise polynomial function supported on the convex hull of $x_{i}$'s i.e. $[\min(x_{1}, x_{0}), \max(x_{n}, x_{0})]$. (See \cite{Heina} for details).
\end{lause}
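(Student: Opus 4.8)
The plan is to read this identity --- exactly like its monotone sibling, Theorem \ref{monotone_integral_formula} --- as the Peano (B-spline) representation of a \emph{single} divided difference, viewed through the two quadratic-form identities already isolated inside the proof of Theorem \ref{convex_full}. Both sides are real symmetric $n \times n$ matrices, so it suffices to compare the Hermitian forms $c \mapsto c^{*}(\cdot)c$ for an arbitrary $c = (c_1, \ldots, c_n) \in \C^n$; I would prove $c^{*} Kr(f, (x_i)_{i=1}^n, x_0)\, c = c^{*}\left(\int_{-\infty}^{\infty} C^{T} K_n(t, f) C\, J_{x_0}(t)\, dt\right) c$ for every such $c$.

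First I would turn the Kraus form into a divided difference. With $q(z) = \sum_{i} c_i \prod_{k \neq i}(z - x_k) \in \C_{n-1}[x]$, the computation behind $(\ref{cond5_c}) \Leftrightarrow (\ref{cond7_c})$ --- i.e. (\ref{formal_divided}) and (\ref{formal_shift}) applied to the partial-fraction expansion of $\sum_{i,j} c_i \overline{c_j}/((\cdot - x_i)(\cdot - x_j)(\cdot - x_0))$ --- gives
\begin{align*}
c^{*} Kr(f, (x_i), x_0)\, c = \sum_{i,j} c_i \overline{c_j}\, [x_i, x_j, x_0]_f = [x_1, x_1, x_2, x_2, \ldots, x_n, x_n, x_0]_{f N(q)},
\end{align*}
a single order-$2n$ divided difference of $f N(q)$ over the node multiset $\{x_1, x_1, \ldots, x_n, x_n, x_0\}$.

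Next I would invoke the Peano / Curry--Schoenberg representation (see \cite{Boo}): for $g \in C^{2n}$ an order-$2n$ divided difference equals $\frac{1}{(2n)!}\int g^{(2n)}(t)\, M(t)\, dt$, where $M$ is the B-spline of the node multiset --- a non-negative piecewise polynomial of degree $2n - 1$, integrating to $1$, supported on the convex hull of the nodes. Applied to $g = f N(q)$ this identifies the kernel: $J_{x_0}(t, (x_i)_{i=1}^n)$ is precisely $M(t \mid x_1, x_1, \ldots, x_n, x_n, x_0)$, which is automatically non-negative, piecewise polynomial, and supported on $[\min(x_1, x_0), \max(x_n, x_0)]$ --- the announced convex hull, covering also the case $x_0 \notin [x_1, x_n]$. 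The final ingredient is the Hankel identity: writing the same $q$ in the Taylor basis at $t$ as $q(z) = \sum_\ell d_\ell (z-t)^{n-\ell}$, the computation behind $(\ref{cond8_c}) \Leftrightarrow (\ref{cond9_c})$ (now using (\ref{formal_shift_der})) gives $\frac{(f N(q))^{(2n)}(t)}{(2n)!} = d^{*} K_n(t, f)\, d$; note that the $(2n)!$ here cancels the one from the Peano formula, leaving $J_{x_0}$ as exactly the B-spline.

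The two coefficient vectors of the one polynomial $q$ are linked by $C$. Expanding each Lagrange factor about $z = t$ gives $\prod_{k \neq i}(z - x_k) = \sum_{\ell} C_{\ell, i}\,(z - t)^{n - \ell}$, which is exactly the stated generating relation $\sum_{\ell} C_{\ell, i}\, x^{\ell - 1} = \prod_{k \neq i}(1 + x(t - x_k))$ after the substitution $x = 1/(z - t)$; hence $d = C(t, (x_i))\, c$. Substituting everything back,
\begin{align*}
c^{*} Kr\, c = \int_{-\infty}^{\infty} d^{*} K_n(t, f)\, d\; J_{x_0}(t)\, dt = \int_{-\infty}^{\infty} (C c)^{*} K_n(t, f)(C c)\, J_{x_0}(t)\, dt = c^{*}\left(\int_{-\infty}^{\infty} C^{T} K_n C\, J_{x_0}\, dt\right) c,
\end{align*}
and since $c$ is arbitrary and $C$ is real, the matrix identity follows. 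I expect the main obstacle to be the B-spline bookkeeping: confirming that the coalescent node multiset (with its doubled knots $x_i, x_i$ and the possibly exterior knot $x_0$) yields exactly the non-negative kernel $J_{x_0}$ with the stated convex-hull support, and that the Peano normalization matches. The change of basis giving $C$ and the cancellation of the $(2n)!$ factors are then routine. The monotone Theorem \ref{monotone_integral_formula} follows by the same scheme, with the single node $x_0$ deleted, the order dropped to $2n - 1$, and $K_n$, $J_{x_0}$ replaced by the Dobsch matrix and the B-spline $I$ of $\{x_1, x_1, \ldots, x_n, x_n\}$ supported on $[x_1, x_n]$.
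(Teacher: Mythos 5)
Your proposal is correct and is essentially the paper's own proof in polarized form: the paper computes the entries of $C^{T} K_{n}(t, f)\, C$ via the formal pairing as $\left(\frac{(f p_{i} p_{j})^{(2n)}(t)}{(2n)!}\right)_{i,j}$ and applies the Peano representation (Lemma \ref{peano_representation}) with the identification $J_{x_{0}} = w(\cdot, (x_{0}, x_{1}, x_{1}, \ldots, x_{n}, x_{n}))$ of (\ref{weight_correspondence}), which is exactly your quadratic-form computation with $N(q)$, $q = \sum_{i} c_{i} p_{i}$, unpolarized. Your remaining worry (matching $J_{x_{0}}$ to the B-spline of the coalescent node multiset) is treated the same way in the paper, which asserts (\ref{weight_correspondence}) by reference to \cite{Heina} rather than reproving it, and your $d = Cc$ change-of-basis step is the paper's closing Remark.
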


Such representation make it clear that positivity of Loewner and Kraus matrices are implied by the positivity of the respective Hankel matrices.

It turns out that one can understand these identities as special cases of the following general fact about divided differences.

\begin{lem}[Peano representation for divided differences]\label{peano_representation}
	Let $a < x_{0} \leq x_{1} \leq \ldots \leq x_{n} < b$ such that $x_{0} \neq x_{n}$. Then there exists a piecewise polynomial non-negative function $w = w(\cdot, (x_{i})_{i = 0}^{n})$ such that for any $f \in C^{n}(a, b)$ one has
	\begin{align*}
		[x_{0}, x_{1}, \ldots, x_{n}]_{f} = \int_{a}^{b} \frac{f^{(n)}(t)}{n!} w(t) dt.
	\end{align*}
\end{lem}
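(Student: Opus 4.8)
The plan is to prove this as the classical Peano kernel theorem, exploiting the fact that the order-$n$ divided difference, viewed as a linear functional of $f$, annihilates every polynomial of degree at most $n - 1$. First I would write Taylor's formula with integral (truncated power) remainder: for $f \in C^{n}(a, b)$ and $s \in (a, b)$,
\begin{align*}
	f(s) = \sum_{k = 0}^{n - 1} \frac{f^{(k)}(a)}{k!} (s - a)^{k} + \frac{1}{(n - 1)!} \int_{a}^{b} (s - t)_{+}^{n - 1} f^{(n)}(t)\, dt,
\end{align*}
where $(s - t)_{+} = \max(s - t, 0)$. Applying the functional $L_{s} := [x_{0}, x_{1}, \ldots, x_{n}]_{(\cdot)}$ in the variable $s$ kills the polynomial part, and interchanging $L_{s}$ with the $t$-integral yields
\begin{align*}
	[x_{0}, x_{1}, \ldots, x_{n}]_{f} = \int_{a}^{b} \frac{f^{(n)}(t)}{n!}\, w(t)\, dt, \qquad w(t) := n\, [x_{0}, x_{1}, \ldots, x_{n}]_{(\cdot - t)_{+}^{n - 1}},
\end{align*}
the divided difference defining $w$ being applied to the function $s \mapsto (s - t)_{+}^{n - 1}$. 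This identifies the candidate kernel; it remains to justify the interchange and to verify that $w$ has the claimed properties.

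For distinct nodes $x_{0} < x_{1} < \ldots < x_{n}$ the functional $L_{s}$ is a finite linear combination of point evaluations, so the interchange with the integral is immediate, and
\begin{align*}
	w(t) = n \sum_{i = 0}^{n} \frac{(x_{i} - t)_{+}^{n - 1}}{\prod_{j \neq i} (x_{i} - x_{j})}
\end{align*}
is manifestly piecewise polynomial in $t$ with breakpoints among the $x_{i}$. Its support lies in $[x_{0}, x_{n}]$: for $t > x_{n}$ every truncated power vanishes, while for $t < x_{0}$ the function $s \mapsto (s - t)^{n - 1}$ is an honest polynomial of degree $n - 1$, whose order-$n$ divided difference is zero.

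The substantive point is the non-negativity of $w$, i.e. that $[x_{0}, x_{1}, \ldots, x_{n}]_{(\cdot - t)_{+}^{n - 1}} \geq 0$ for each fixed $t$. For $n \geq 2$ this follows from Lemma \ref{k_tone_regularity}: the truncated power $s \mapsto (s - t)_{+}^{n - 1}$ lies in $C^{n - 2}$ and its $(n - 2)$-th derivative equals $(n - 1)!\,(s - t)_{+}$, a convex ramp, so the function is $n$-tone and all its order-$n$ divided differences are non-negative. (The case $n = 1$ is the elementary identity in which $w$ equals $1/(x_{1} - x_{0})$ on $(x_{0}, x_{1})$ and vanishes elsewhere.)

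Finally, for repeated nodes the general divided difference still annihilates polynomials of degree at most $n - 1$, so the same computation applies once the interchange is justified. Here I would argue by continuity, letting distinct nodes coalesce to the prescribed configuration $x_{0} \leq x_{1} \leq \ldots \leq x_{n}$ and using Lemma \ref{divided_basic}(ii) for the left-hand side together with dominated convergence for the kernel; the exceptional set of $t$ at which $(\cdot - t)_{+}^{n - 1}$ fails to be smooth enough at a confluent node is finite, hence null, so the limiting kernel remains piecewise polynomial, non-negative, and supported in $[x_{0}, x_{n}]$. The main obstacle is precisely this non-negativity, reduced above to convexity of the ramp, together with the bookkeeping in the confluent case; the Peano identity itself is a formal consequence of Taylor's theorem.
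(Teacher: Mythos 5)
Your proof is correct in substance, and it is genuinely more than the paper offers: the paper does not prove this lemma at all, but simply cites \cite{Boo}, whereas you reconstruct the standard Peano-kernel argument that underlies that reference. Your kernel $w(t) = n\,[x_{0}, \ldots, x_{n}]_{(\cdot - t)_{+}^{n - 1}}$ is (up to normalization) the classical B-spline of Curry--Schoenberg, and your derivation --- Taylor with truncated-power remainder, annihilation of the degree-$(n-1)$ part by the order-$n$ divided difference, interchange of the functional with the integral --- is exactly the textbook route. A particularly nice touch is that you obtain the non-negativity of $w$ from the paper's own Lemma \ref{k_tone_regularity}, by observing that $s \mapsto (s - t)_{+}^{n - 1}$ is $C^{n-2}$ with convex $(n-2)$-nd derivative $(n-1)!\,(s - t)_{+}$, hence $n$-tone; this keeps the argument self-contained within the paper's toolkit rather than invoking the total positivity or recurrence machinery of the spline literature.

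Two small points deserve repair, though neither is a genuine gap. First, your Taylor expansion is based at the open endpoint $a$, where $f^{(k)}(a)$ need not exist for $f \in C^{n}(a,b)$; base it instead at any $c \in (a, x_{0})$ and integrate over $[c, b)$ --- since you show $w$ is supported in $[x_{0}, x_{n}]$, the displayed formula with $\int_{a}^{b}$ is unaffected. Second, in the confluent case your appeal to dominated convergence needs a dominating bound: this is available because each approximating kernel is non-negative with $\int w_{\epsilon} = 1$ and satisfies a uniform sup bound of the form $w_{\epsilon} \leq n/(x_{n}^{\epsilon} - x_{0}^{\epsilon})$, which stays bounded since $x_{0} \neq x_{n}$; alternatively you can avoid the limit entirely by defining $w(t)$ directly through the confluent divided difference, which makes sense for all but finitely many $t$ (a node of multiplicity $m$ requires an $(m-1)$-st derivative of the truncated power, problematic only when $t$ equals a node of multiplicity $n$, and only at that single point). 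With these adjustments your argument is a complete and correct proof of the lemma.
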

\begin{proof}
	See for instance \cite{Boo}.
\end{proof}

In turns out that in Theorems \ref{monotone_integral_formula} and \ref{convex_integral_formula} one has
\begin{align}\label{weight_correspondence}
	I(\cdot, (x_{i})_{i = 1}^{n}) = w(\cdot, (x_{1}, x_{1}, \ldots, x_{n}, x_{n})) \\
	J_{x_{0}}(\cdot, (x_{i})_{i = 1}^{n}) = w(\cdot, (x_{0}, x_{1}, x_{1}, \ldots, x_{n}, x_{n})). \nonumber
\end{align}

With Lemma \ref{peano_representation} and (\ref{weight_correspondence}) one can give short proofs for Theorems \ref{monotone_integral_formula} and \ref{convex_integral_formula}.

\begin{proof}[Proof of Theorem \ref{monotone_integral_formula}]
	Note that one has
	\begin{align*}
		L(f, (x_{i})_{i = 1}^{n}) = \left\langle f, \left(\frac{1}{(\cdot - x_{i}) (\cdot - x_{j})}\right)_{i, j}^{n}\right\rangle_{L}
	\end{align*}
	and
	\begin{align*}
		 & C(t, (x_{i})_{i = 1}^{n})^{T} K_{n}(t, f) C(t, (x_{i})_{i = 1}^{n}) \\
		 =& \left\langle f, C(t, (x_{i})_{i = 1}^{n})^{T} \left(\frac{1}{(\cdot - t)^{i}(\cdot - t)^{j}}\right)_{i, j}^{n}C(t, (x_{i})_{i = 1}^{n})\right\rangle_{L} \\
		 =& \left\langle f, \left(\left(\frac{1}{(\cdot - t)^{i}}\right)_{i = 1}^{n} C(t, (x_{i})_{i = 1}^{n})\right)^{T}\left(\left(\frac{1}{(\cdot - t)^{i}}\right)_{i = 1}^{n}  C(t, (x_{i})_{i = 1}^{n})\right)\right\rangle_{L} \\
		 =& \left\langle f, \left(\frac{1}{(\cdot - t)^{2}} \prod_{i' \neq i} \left(1 + \frac{t - x_{i'}}{\cdot - t}\right) \prod_{j' \neq j} \left(1 + \frac{t - x_{j'}}{\cdot - t}\right) \right)_{i, j = 1}^{n} \right\rangle_{L} \\
		 =& \left(\left\langle f, \frac{1}{(\cdot - t)^{2 n}} p_{i} p_{j}\right\rangle_{L}\right)_{i, j}^{n} \\
		 =& \left(\frac{(f p_{i} p_{j})^{(2 n - 1)}(t)}{(2 n - 1)!}\right)_{i, j}^{n}
	\end{align*}
	where $p_{i} = \prod_{j \neq i}(\cdot - x_{j})$. Consequently by Lemma \ref{peano_representation} one has
	\begin{align*}
		&\int_{-\infty}^{\infty} C(t, (x_{i})_{i = 1}^{n})^{T} M_{n}(t, f) C(t, (x_{i})_{i = 1}^{n}) I(t, (x_{i})_{i = 1}^{n}) d t \\
		= & \left(\int_{x_{0}}^{x_{n}} \frac{(f p_{i} p_{j})^{(2 n - 1)}(t)}{(2 n - 1)!} w(t, (x_{0}, x_{0}, \ldots, x_{n}, x_{n})) dt\right)_{i, j}^{n} \\
		= & \left([x_{1}, x_{1}, \ldots, x_{n}, x_{n}]_{f p_{i} p_{j}}\right)_{i, j}^{n} \\
		= & \left(\left\langle f p_{i} p_{j}, \frac{1}{(\cdot - x_{1})^2 \cdots (\cdot - x_{n})^{2}}\right\rangle_{L}\right)_{i, j = 1}^{n} \\
		= & \left\langle f, \left(\frac{1}{(\cdot - x_{i}) (\cdot - x_{j})}\right)_{i, j = 1}^{n} \right\rangle_{L},
	\end{align*}
	as desired.
\end{proof}

\begin{proof}[Proof of Theorem \ref{convex_integral_formula}]
	Proof is almost identical to that of Theorem \ref{monotone_integral_formula}.
\end{proof}

\begin{huom}
Note that $C(t, (x_{i})_{i = 1}^{n})$ is the matrix of change of basis between
\begin{align*}
	\left( \frac{1}{(\cdot - t)^{i}} \right)_{i = 1}^{n} \text{ and } \left( \frac{p_{i}(t)}{(\cdot - t)^{n}}\right)_{i = 1}^{n}.
\end{align*}
\end{huom}

\section{Matrix monotone functions on general sets}

While much of the discussion of matrix monotonicity on open interval $(a, b)$ generalizes directly to general sets, Theorem \ref{main_monotone} doesn't quite make it through, for rather obvious reason. If $F$ is set with less than $2 n$ elements then the condition on $n$-monotonicity is void, but not every function is $n$-monotone. Similar failure can happen if $|F| = 2 n$ and $n > 1$: take $F = \{x_{0}\}_{i = 1}^{2 n - 1}$ for some $x_{0} < x_{1} < \ldots < x_{2 n - 1}$ and define a function $f : F \to \R$ with $f(x_{i}) = \delta_{i, 1}$. This function is definitely not $n$-monotone but, as one easily checks, it satisfies (\ref{main_condition}). These turn out to be the only bad things that can happen.

Only two observations we used before made use of the fact that we were working on open intervals:
\begin{itemize}
	\item In Lemma \ref{span_lemma} we made use of $3$ extra points; if $F$ has at least $2 n + 1$ points, the Lemma is applicable. This explains the ``Moreover" -part of the statement of Theorem \ref{general_monotone}. One could of course make more precise statements on the behaviour when $|F|$ is smaller than $2 n$ (even for small $|F|$ there is no need to verify (\ref{main_condition}) for more than two $k$'s etc.).

	\item In the proof of Theorem \ref{main_monotone} we reduced the discussion to the case of projection pairs by cooking up a chain of projection pairs $(A_{i}, A_{i + 1}, v_{i})$ with $A_{0} = A$ and $A_{n} = B$. Replacing this argument is the main obstacle in the proof of Theorem \ref{general_monotone}.
\end{itemize}

\begin{proof}{Proof of theorem \ref{general_monotone}}
	$``\Rightarrow"$: Simply use Lemma \ref{main_identity} on subspaces of dimensions $1 \leq k \leq n$.

	$``\Leftarrow"$: We prove that for any $A \leq B$ and $w \in \C^{n}$ the term $\langle (f(B) - f(A)) w, w\rangle$ can written as a sum of terms of the form \ref{main_condition} for various $k$. Recall that if $0 \leq A \leq B$, then $B^{-1} \leq A^{-1}$. This implies that for any $A \leq B$ and $w \in \C^{n}$ we have
	\begin{align}\label{poly_decomp}
		\langle ((z I - B)^{-1} - (z I - A)^{-1}) w, w\rangle = \frac{q(z)}{(z - x_{1}) \cdots (z - x_{k})},
	\end{align}
	where $x_{1} < \ldots < x_{k}$ are the distinct eigenvalues of $A$ and $B$; and $q \in \R_{k - 1}[x]$ is non-negative $[x_{k}, \infty)$ and non-negative/non-positive on $(-\infty, x_{1})$ for even/odd $k$, respectively. But any such polynomial can be written in particularly simple form.
	\begin{lem}\label{polynomial_ab_lemma}
		Let $-\infty < a \leq b < \infty$ and $q$ a real polynomial of degree $k$.
		\begin{enumerate}
			\item If $k$ is even and $q$ is non-negative outside $(a, b)$, $q$ can be written as a positive linear combination of polynomials of the form
			\begin{align*}
				N(\tilde{q}) & \text{ for $\tilde{q} \in \C_{\frac{k}{2}}[x]$} \hspace{1 cm} \text{and}\\
				(\cdot - a) (\cdot - b) N(\tilde{q})  & \text{ for $\tilde{q} \in \C_{\frac{k}{2} - 1}[x]$}.
			\end{align*}
			\item If $k$ is odd and $q$ is non-negative on $[b, \infty)$ and non-positive on $(-\infty, a]$, then it can written as a positive linear combination of polynomials of the form
			\begin{align*}
				(\cdot - a) N(\tilde{q}) & \text{ for $\tilde{q} \in \C_{\frac{k - 1}{2}}[x]$} \hspace{1 cm} \text{and}\\
				(\cdot - b) N(\tilde{q}) & \text{ for $\tilde{q} \in \C_{\frac{k - 1}{2}}[x]$}.
			\end{align*}
		\end{enumerate}
	\end{lem}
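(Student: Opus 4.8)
The plan is to reduce both cases, by real factorization, to a single \emph{core} polynomial all of whose roots lie in $[a,b]$, and then to analyze that core through a quadratic-module argument, finally translating back to the $N(\cdot)$-language via Lemma \ref{polynomial_lemma}.

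\emph{Reduction by factorization.} First I would factor $q$ into its real linear factors and its irreducible (positive definite) quadratic factors, with leading coefficient $c$. In case (1) the degree is even and $q \geq 0$ near $\pm\infty$, forcing $c > 0$; in case (2) the degree is odd and the sign conditions again force $c > 0$. Any real root strictly outside $[a,b]$ lies in a region where $q$ keeps its sign, so it has even multiplicity; collecting these, together with $c$ and the positive definite quadratic factors (each equal to $N(\ell)$ for a linear $\ell \in \C_1[x]$, hence a sum of two squares), produces a factor $\tau$ that is a sum of squares. What remains is a monic real polynomial $r$ whose roots all lie in $[a,b]$, and $q = \tau r$; here $r$ has even degree in case (1) and odd degree in case (2).

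\emph{The quadratic module.} Set $g := (\cdot - a)(\cdot - b)$ and let $M$ be the set of all $\sigma_0 + g\,\sigma_1$ with $\sigma_0, \sigma_1$ sums of squares. Since $g^2$ is a square, $M$ is closed under multiplication: expanding $(\sigma_0 + g\sigma_1)(\tau_0 + g\tau_1)$ and absorbing $g^2\sigma_1\tau_1$ into the square part keeps one in $M$, and a direct degree count shows that a product of $j$ quadratics of ``type $(2,0)$'' (meaning $\deg\sigma_0 \leq 2$, $\sigma_1$ constant) lies in $M$ with $\deg\sigma_0 \leq 2j$ and $\deg\sigma_1 \leq 2j-2$. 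The crux is then the claim that \emph{every quadratic $(\cdot-\rho)(\cdot-\rho')$ with $\rho,\rho' \in [a,b]$ lies in $M$ with nonnegative constant $\sigma_1$.} After the affine normalization $[a,b] = [-1,1]$, so that $g = x^2 - 1$, one seeks $c \in [0,1]$ making $\sigma_0 = (x-\rho)(x-\rho') - c(x^2-1)$ a nonnegative quadratic; its discriminant, viewed as a function of $c$, is the upward parabola $4c^2 - 4(1-\rho\rho')c + (\rho-\rho')^2$, minimized at $c^\ast = (1-\rho\rho')/2 \in [0,1]$ with minimal value $-(1-\rho^2)(1-\rho'^2) \leq 0$, and the degenerate case $\rho\rho' = -1$ forces $\rho = -1,\ \rho' = 1$ and is dispatched by $(x+1)(x-1) = g$.

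\emph{Assembling the cases.} In case (1), $r$ is a product of quadratics each with both roots in $[a,b]$, so $r \in M$ by the claim and closure, whence $q = \tau r = \tau\sigma_0 + g\,\tau\sigma_1$ with $\tau\sigma_0, \tau\sigma_1$ sums of squares of degrees at most $k$ and $k-2$; Lemma \ref{polynomial_lemma} rewrites these as $N(\tilde q_0)$ and $N(\tilde q_1)$ with $\tilde q_0 \in \C_{\frac{k}{2}}[x]$, $\tilde q_1 \in \C_{\frac{k}{2}-1}[x]$, which is exactly the asserted form. In case (2) I peel off one linear factor of the odd-degree $r$: writing $\rho_0 = (1-t)a + tb$ with $t \in [0,1]$ gives $(\cdot-\rho_0) = (1-t)(\cdot-a) + t(\cdot-b)$, while the remaining quadratics contribute some $\sigma_0^\ast + g\sigma_1^\ast \in M$; expanding the product and using $(\cdot-a)g = (\cdot-a)^2(\cdot-b)$ and $(\cdot-b)g = (\cdot-a)(\cdot-b)^2$ regroups everything as $(\cdot-a)\Sigma_0 + (\cdot-b)\Sigma_1$ with $\Sigma_0, \Sigma_1$ sums of squares of degree at most $k-1$, and multiplying by $\tau$ and applying Lemma \ref{polynomial_lemma} once more produces the required positive combination of $(\cdot-a)N(\tilde q)$ and $(\cdot-b)N(\tilde q)$ with $\tilde q \in \C_{\frac{k-1}{2}}[x]$. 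I expect the main obstacle to be precisely the quadratic claim together with the degree bookkeeping for the multiplicative closure of $M$; once these are secured, the factorization and the sum-of-squares dictionary of Lemma \ref{polynomial_lemma} finish both parts uniformly.
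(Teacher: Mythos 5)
Your proof is correct, but after the shared opening step it takes a genuinely different route from the paper. Both arguments begin identically: collect the leading coefficient, the even-multiplicity real roots outside $[a,b]$, and the conjugate-pair quadratic factors into a sum-of-squares factor (the paper writes it as $N(\tilde q)$), leaving a monic polynomial $r$ with all roots in $[a,b]$. The paper then finishes in two lines: it writes \emph{every} linear factor $(\cdot - y_i)$ with $y_i \in [a,b]$ as a convex combination $(1-t)(\cdot - a) + t(\cdot - b)$, expands the whole product multilinearly into a positive combination of terms $N(\tilde q)(\cdot - a)^{l_a}(\cdot - b)^{l_b}$, and sorts the resulting monomials by the parities of $l_a, l_b$ (even--even absorbs into $N$, odd--odd yields the $(\cdot - a)(\cdot - b)N(\tilde q)$ terms, mixed parity yields the odd-case forms). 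You instead pair the roots of $r$ into quadratics and prove a Markov--Lukács-type saturation statement: each quadratic with both roots in $[a,b]$ lies in the module $M = \Sigma + g\Sigma$ with $g = (\cdot - a)(\cdot - b)$; your discriminant computation is right ($D(c) = 4c^2 - 4(1-\rho\rho')c + (\rho-\rho')^2$ with minimum $-(1-\rho^2)(1-\rho'^2) \leq 0$ at $c^* = (1-\rho\rho')/2 \in [0,1]$, degenerate case $\rho\rho' = -1$ handled separately), the multiplicative closure of $M$ is valid since $g^2$ is a square, and in the odd case your regrouping via $(\cdot - a)g = (\cdot - a)^2(\cdot - b)$ and $(\cdot - b)g = (\cdot - a)(\cdot - b)^2$ together with the convex-combination trick on the single peeled factor is exactly what is needed; the degree bookkeeping checks out, and Lemma \ref{polynomial_lemma} legitimately converts each sum of squares of degree at most $2m$ into a single $N(\tilde q)$ with $\tilde q \in \C_m[x]$. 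What each approach buys: the paper's multilinear expansion is shorter and entirely elementary, at the cost of producing $2^l$ terms and a four-way parity case analysis; your quadratic-module argument is heavier to set up but keeps tight degree control at every stage and places the lemma within the standard preordering/saturation framework of real algebraic geometry (cf. \cite{Prestel}, which the paper also points to). One small caveat: your affine normalization to $[-1,1]$ tacitly assumes $a < b$, while the statement permits $a = b$; in that degenerate case every root of $r$ equals $a$, so $r$ is a power of $(\cdot - a)$ and both conclusions are immediate.
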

	\begin{proof}
		(See also \cite{Prestel}). By investigating the locations of roots of $q$, one sees that in both cases $q$ can be written in the form
		\begin{align*}
			N(\tilde{q}) \prod_{i = 1}^{l} (\cdot - y_{i}),
		\end{align*}
		where $0 \leq l \leq k$ is off the same parity as $k$, and $\tilde{q}$ is a complex polynomial (of degree $(k - l)/2$), and $y_{i} \in [a, b]$ for any $1 \leq i \leq l$. Now each factor of the form $(\cdot - y_{i})$ is a (non-negative) weighted average of $(\cdot - a)$ and $(\cdot - b)$, so $q$ can further written as a weighted average of polynomials of the form
		\begin{align*}
			N(\tilde{q}) (\cdot - a)^{l_{a}} (\cdot - b)^{l_{b}},
		\end{align*}
		where $l = l_{a} + l_{b}$ and $\tilde{q}$ as before. Finally, depending on the parities of $l_{a}$ and $l_{b}$, such polynomials can be rewritten in the previous four categories.
	\end{proof}
	By the previous lemma (choosing $a = x_{1}$ and $b = x_{k}$) one sees that (depending on the parity of $k$)
	\begin{align*}
		\langle ((z I - B)^{-1} - (z I - A)^{-1}) w, w\rangle
	\end{align*}
	can be either written as a sum of terms of the form
	\begin{align*}
		 \frac{N(\tilde{q})(z)}{(z - x_{2}) \cdots (z - x_{k})} \text{ and } \frac{N(\tilde{q})(z)}{(z - x_{1}) \cdots (z - x_{k - 1})}
	\end{align*}
	or
	\begin{align*}
		\frac{N(\tilde{q})(z)}{(z - x_{1}) \cdots (z - x_{k})} \text{ and } \frac{N(\tilde{q})(z)}{(z - x_{2}) \cdots (z - x_{k - 1})}
	\end{align*}
	where $\tilde{q}$ is a complex polynomial of suitable degree (as in the lemma). Consequently, as in the proof of Lemma \ref{main_identity}, $\langle (f(B) - f(A)) w, w\rangle$ can be written as a sum of terms of the form
	\begin{align*}
		[x_{0}, x_{1}, \ldots, x_{2 l - 1}]_{f N(q)},
	\end{align*}
	for some $l \geq 0$, $x_{1} < x_{2} < \ldots < x_{2 l - 1}$ on $F$ and $q \in \C_{l - 1}[x]$. We are done.

	``Moreover": As remarked before the proof, this follows from the proof of Lemma \ref{span_lemma}.
\end{proof}

Based on this characterization, one can give a generalization to Corollary \ref{local_property}.

\begin{kor}\label{general_local_property}
	Let $n \geq 1$ and $F_{1}, F_{2} \subset \R$ with the following property: if $F_{1} \setminus F_{2} \ni x < y \in F_{2} \setminus F_{1}$, then $(x, y) \cap F_{1} \cap F_{2}$ contains at least $2 n - 1$ points. Then if $f : F_{1} \cup F_{2} \to \R$ is such that $\restr{f}{F_{1}}$ and $\restr{f}{F_{2}}$ are both $n$-monotone, then so is $f$.
\end{kor}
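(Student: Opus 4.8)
The plan is to deduce this from the general divided–difference characterization of Theorem \ref{general_monotone} together with the refinement Lemma \ref{refinement_lemma}, mimicking the way Lemma \ref{k_tone_local} follows from Lemma \ref{refinement_lemma} in the $k$-tone setting. Since $F_1 \cup F_2$ may be large, I would work with the full family of conditions: by Theorem \ref{general_monotone} it suffices to show that
\begin{align*}
	[x_0, x_1, \ldots, x_{2k-1}]_{f N(q)} \geq 0
\end{align*}
for every $1 \leq k \leq n$, every $q \in \C_{k-1}[x]$, and all pairwise distinct $x_0 < x_1 < \ldots < x_{2k-1} \in F_1 \cup F_2$. So fix such $k$, $q$ and points, and set $g := f N(q)$.

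First I would classify each $x_i$ as lying in $F_1 \cap F_2$, in $F_1 \setminus F_2$, or in $F_2 \setminus F_1$. The goal is to produce a refinement $y_0 < y_1 < \ldots < y_N$ of $(x_i)$, obtained by inserting \emph{only} points of $F_1 \cap F_2$, with the property that every block of $2k$ consecutive $y$'s lies entirely in $F_1$ or entirely in $F_2$; equivalently, no such block simultaneously contains a point of $F_1 \setminus F_2$ and a point of $F_2 \setminus F_1$ (a block meeting only one of the two exclusive parts, together with common points, is automatically contained in a single $F_i$). Granting such a refinement, Lemma \ref{refinement_lemma} applied to $g$ writes $[x_0, \ldots, x_{2k-1}]_g$ as a non-negative combination $\sum_j t_j [y_j, \ldots, y_{j+2k-1}]_g$, and each window $[y_j, \ldots, y_{j+2k-1}]_{f N(q)}$ has all its arguments in $F_1$ or all in $F_2$, hence is non-negative by the $n$-monotonicity of $\restr{f}{F_1}$ or $\restr{f}{F_2}$ via the level-$k$ case of Theorem \ref{general_monotone} (with the same $q \in \C_{k-1}[x]$, legitimate since $k \leq n$). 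Therefore the whole sum is non-negative, as desired.

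The heart of the matter — and the step I expect to be the main obstacle — is constructing this refinement, and this is exactly where the overlap hypothesis enters. If $x \in F_1 \setminus F_2$ and $y \in F_2 \setminus F_1$ with $x < y$, the hypothesis furnishes at least $2n - 1 \geq 2k - 1$ points of $F_1 \cap F_2$ strictly between them; interposing $2k-1$ of these into the refinement forces any block of $2k$ consecutive $y$'s that contained both $x$ and $y$ to contain at least $(2k-1) + 2 = 2k+1$ points, which is impossible. Carrying this out for every pair of oppositely–exclusive original points yields a refinement with the required monochromatic–block property. In writing this up I would stress two points: that the \emph{uniform} requirement of $2n-1$ common points (rather than a $k$-dependent count) is precisely what lets a single hypothesis serve all levels at once, since $2k - 1 \leq 2n - 1$; and that inserting only points of $F_1 \cap F_2$ introduces no new exclusive points, so the classification of the original $x_i$ is undisturbed and the count above is genuinely available. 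Once the refinement is in hand the remainder is pure assembly of Theorem \ref{general_monotone} and Lemma \ref{refinement_lemma}.
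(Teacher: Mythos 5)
Your proposal is correct and follows essentially the same route as the paper: reduce via Theorem \ref{general_monotone} to checking $[x_{0}, \ldots, x_{2k-1}]_{f N(q)} \geq 0$ for all $1 \leq k \leq n$, use the overlap hypothesis to insert points of $F_{1} \cap F_{2}$ so that every window of consecutive points in the refinement lies entirely in $F_{1}$ or in $F_{2}$, and conclude with Lemma \ref{refinement_lemma}. The paper states the refinement step in one sentence; your write-up merely spells out the counting argument behind it, which is a faithful (and somewhat more careful) elaboration rather than a different proof.
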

\begin{proof}
	Pick any $1 \leq k \leq n$, $q \in \C_{k - 1}[x]$ and $x_{0} < x_{1} < \ldots < x_{2 k - 1} \in F_{1} \cup F_{2}$. By Theorem \ref{general_monotone} it suffices to check that
	\begin{align*}
		[x_{0}, x_{1}, \ldots, x_{2 k - 1}]_{f N(q)} \geq 0.
	\end{align*}
	By the assumption on the sets one may find a refinement of $(x_{0}, x_{1}, \ldots, x_{2 k - 1})$ for which all consequtive $2 n$ points belong completely to one of $F_{1}$ and $F_{2}$. But now Lemma \ref{refinement_lemma} implies the claim.
\end{proof}

\begin{huom}\label{general_regularity}
	While one could certainly formulate statements about regularity of $n$-monotone functions on general sets (akin to Corollary \ref{regularity}) in terms of divided differences, such line of thought is not pursued here.
\end{huom}

\begin{lause}\label{interpolation_failure}
	Let $n > 1$. Then there exists a set $F$, a $n$-monotone function $f : F \to \R$ and a point $x_{0} \in \conv(F) \setminus F$, such that $f$ cannot be extended to $n$-monotone function on $F \cup \{x_{0}\}$.
\end{lause}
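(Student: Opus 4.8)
The plan is to use the divided-difference characterization of Theorem \ref{general_monotone} to turn extendability into a one-variable linear feasibility problem, and then to exhibit an explicit finite configuration whose induced lower and upper bounds for the missing value are incompatible. First I would fix a finite $F$ and $x_0 \in \conv(F) \setminus F$ and record that, by Theorem \ref{general_monotone}, the $n$-monotone functions on $F$ form a closed convex cone $\mathcal{C}(F) \subset \R^{F}$ cut out by the inequalities $[y_0, \ldots, y_{2k-1}]_{f N(q)} \ge 0$. Among the analogous inequalities for $F \cup \{x_0\}$, those involving $x_0$ are affine in the single unknown $f(x_0)$, with coefficient $N(q)(x_0)/\prod_{y \ne x_0}(x_0 - y)$; its sign is $(-1)^{\#\{y > x_0\}}$, so these split into lower bounds $f(x_0) \ge L_\alpha(f|_F)$ (an even number of nodes above $x_0$) and upper bounds $f(x_0) \le U_\beta(f|_F)$ (an odd number). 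Hence $f|_F$ extends if and only if $\max_\alpha L_\alpha \le \min_\beta U_\beta$, and it suffices to produce one $F$, one lower-bound constraint $\alpha$, one upper-bound constraint $\beta$, and one $f \in \mathcal{C}(F)$ with $L_\alpha(f) > U_\beta(f)$. The witness $f$ is necessarily $n$- but not $\infty$-monotone, since by the results of Chandler and Donoghue every $\infty$-monotone function on $F$ extends; so $F$ must be large enough that $n$-monotonicity is strictly weaker than $\infty$-monotonicity, which forces $\#F \ge 2n+2$.

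For the construction I would take $F$ to be two tight clusters of $2n-1$ points, centered at $-1$ and at $+1$, and $x_0 = 0$; then $\#F = 4n-2 > 2n$, so by the ``moreover'' part of Theorem \ref{general_monotone} only the $k=n$ conditions need checking. In the confluent limit a function on $F$ degenerates to a pair of germs (Taylor jets of order $2n-2$) at $\pm 1$, and the relevant inequalities become confluent divided differences. Taking the constant weight $q \equiv 1$, the $2n$-node constraint at $\{x_0\} \cup \{(-1)^{(2n-1)}\}$ is a lower bound and the one at $\{x_0\} \cup \{(1)^{(2n-1)}\}$ an upper bound; by the Taylor-remainder form of a confluent divided difference these read
\begin{align*}
	f(0) \ge T_{2n-2}(f; -1)(0), \qquad f(0) \le T_{2n-2}(f; 1)(0),
\end{align*}
where $T_{m}(f;c)$ denotes the degree-$m$ Taylor polynomial of $f$ at $c$. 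Thus the gap $L_\alpha - U_\beta = T_{2n-2}(f;-1)(0) - T_{2n-2}(f;1)(0)$ is a linear functional of the two germs, and I must make it positive while all straddling conditions $[(-1)^{(j)}, 1^{(2n-j)}]_{f N(q)} \ge 0$ ($1 \le j \le 2n-1$, $q \in \C_{n-1}[x]$) hold.

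The main obstacle is exactly this last point: the straddling conditions are the finite-$n$ residue of the rigidity that makes $\infty$-monotone germs co-realizable, and naive germ choices violate them; already the $(2n-1,1)$ condition, imposed over all weights $N(q)$, is a semidefinite constraint that pins down the top Taylor coefficient. I would overcome it by choosing the top germ data at the extreme values permitted by the $(2n-1,1)$ and $(1,2n-1)$ conditions and balancing the remaining freedom against the generalized Loewner condition coming from the $(n,n)$ split, then verifying that the resulting maximal gap is strictly positive. This is transparent for $n = 2$: the three families of straddling conditions reduce to the $2 \times 2$ Loewner inequality $(f(-1)-f(1))^2 \le 4 f'(-1) f'(1)$ together with two scalar semidefinite conditions bounding $f''(\pm 1)$, and optimizing the gap over them (taking $f'(-1) = f'(1) = t$ and $f(1) - f(-1) = t\sqrt 2$) yields maximal gap $t\,(3 - 2\sqrt 2) > 0$, an explicit $2$-monotone germ pair with conflicting bounds.

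Finally, since every inequality used is strict, the conflict persists for genuine (non-confluent) clusters of nearby distinct points: this produces an actual finite set $F$, an $n$-monotone $f : F \to \R$, and $x_0 = 0 \in \conv(F) \setminus F$ admitting no $n$-monotone extension. Carrying out the analogous optimization for general $n$ — that is, showing the maximal gap stays strictly positive once the top germ data is driven to the boundary of the $(2n-1,1)$ and $(1,2n-1)$ conditions and the lower-order data is fitted to the $(n,n)$ Loewner condition — is the remaining computational heart of the argument.
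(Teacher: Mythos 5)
Your reduction is sound as far as it goes: turning extendability into a one-variable feasibility problem for $f(x_0)$, the sign analysis of the coefficient $N(q)(x_0)/\prod_{y \neq x_0}(x_0 - y)$, and the resulting split into lower and upper bounds are all correct (your ``if and only if'' quietly ignores constraints with $q(x_0) = 0$, which constrain $f|_F$ alone, but since you only use the direction ``extends $\Rightarrow$ bounds compatible'' this is harmless). The genuine gap is that the argument proves the theorem only for $n = 2$. For general $n$ everything rests on the claim that the maximal gap $T_{2n-2}(f;-1)(0) - T_{2n-2}(f;1)(0)$, optimized subject to the $(2n-1,1)$, $(n,n)$ and $(1,2n-1)$ semidefinite straddling conditions, remains strictly positive --- and you explicitly leave this optimization undone. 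No mechanism in the proposal forces positivity for all $n$; since that positivity \emph{is} the content of the theorem, this is a missing idea rather than a routine computation. There is also a local flaw in the limiting step: you justify passing from the confluent limit to genuine clusters of distinct points by ``every inequality used is strict,'' but at your extremal germ data the $(2n-1,1)$ and $(1,2n-1)$ constraints are active (equality holds for some weight $q$), so they are not strict. This is repairable --- strictly feasible germ pairs exist (e.g.\ germs of a Cauchy transform of a measure with infinite support outside the clusters), so one can retreat slightly from the boundary, keep a positive gap, and then perturb --- but as written the step fails exactly at the point you optimize.

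For comparison, the paper avoids optimization entirely and works uniformly in $n$. It takes $F$ to be $2n+2$ distinct points, writes $\frac{(z - x_3)\cdots(z - x_{2n})}{(z - \lambda_1)\cdots(z - \lambda_{2n-2})} = r_2 - r_1$ with $\lambda_i \notin \conv(F)$, where $r_1, r_2$ are rational Pick functions of degree $n-1$ agreeing on the middle $2n - 2$ points, and defines $f$ to agree with $r_1$ on the first $2n$ points and with $r_2$ on the last $2n$. Monotonicity of $f$ reduces via Lemma \ref{refinement_lemma} to three consecutive windows of $2n$ points, and non-extendability is a \emph{rigidity} (forced-value) argument rather than an incompatible-bounds argument: choosing $q$ with the poles of $r_1$ as roots gives $[x_1, \ldots, x_{2n}]_{f N(q)} = 0$ with $q(x_0) \neq 0$, and the refinement identity expresses this zero as a convex combination of two non-negative divided differences involving $x_0$, forcing $f(x_0) = r_1(x_0)$; symmetrically $f(x_0) = r_2(x_0)$, a contradiction since $r_1 \neq r_2$ off $F$. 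In effect the paper exploits the degeneracy of rational functions of degree $n - 1$ (a kernel vector of the constraint form) where you attempt a strict-gap optimization; if you pursue your route, the natural way to certify a positive gap for all $n$ is to evaluate your functional at germs of two such Pick functions --- which leads you back essentially to the paper's construction.
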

\begin{proof}
	Let $F = \{x_{1} < x_{2} < \ldots < x_{2 n + 2} \}$ be any set of $(2 n + 2)$ points. We claim that one may choose $f$ to be a function on $F$ which agrees with a rational function $r_{1}$ on the set of first $2 n$ points and with another rational function $r_{2}$ on the set of last $2 n$ points; these rational functions are both of degree $n - 1$; and that $x_{0}$ can be chosen to be any point between $x_{n + 1}$ and $x_{n + 2}$.

	Indeed, choose any pairwise distinct $\lambda_{1}, \lambda_{2}, \ldots, \lambda_{2 n - 2}$ outside $\conv(F)$. Consider the function
	\begin{align*}
		z \mapsto \frac{(z - x_{3}) (z - x_{4}) \cdots (z - x_{2 n})}{(z - \lambda_{1}) (z - \lambda_{2}) \cdots (z - \lambda_{2 n - 2})}.
	\end{align*}
	It's easy to check that it has exactly $(n - 1)$ poles of both positive and negative residue so we may write it in the form $r_{2} - r_{1}$ where $r_{1}$ and $r_{2}$ are rational Pick functions of degree $n - 1$, unique up to constant. Since $r_{1}$ and $r_{2}$ agree on the middle $(2 n - 2)$ points, they determine $f$ as in the plan.

	To check that such $f$ is $n$-monotone, note that by the argument of Lemma \ref{refinement_lemma} and Theorem \ref{general_monotone} we only need to check that
	\begin{align*}
		[x_{1}, x_{2}, \ldots, x_{2 n}]_{f N(q)}, [x_{2}, x_{3}, \ldots, x_{2 n + 1}]_{f N(q)}, [x_{3}, x_{4}, \ldots, x_{2 n + 2}]_{f N(q)}
	\end{align*}
	are non-negative for any $q \in \C_{n - 1}[x]$. Since $f$ agrees with $r_{1}$ $(r_{2})$ on the first (last) $2 n$ points, the first and last terms are non-negative (by Theorem \ref{general_monotone}). Indeed both $r_{1}$ and $r_{2}$ are even $\infty$-monotone in the largest open interval containing $x_{i}$'s and not containing $\lambda_{i}$'s. Also, since \begin{align*}
		r_{2}(x_{2 n + 1}) - r_{1}(x_{2 n + 1}) = \frac{(x_{2 n + 1} - x_{3}) (x_{2 n + 1} - x_{4}) \cdots (x_{2 n + 1} - x_{2 n})}{(x_{2 n + 1} - \lambda_{1}) (x_{2 n + 1} - \lambda_{2}) \cdots (x_{2 n + 1} - \lambda_{2 n - 2})} > 0,
	\end{align*}
	and $f$ and $r_{1}$ agree on $\{x_{i}, 2 \leq i \leq 2 n\}$, we have
	\begin{align*}
		[x_{2}, x_{3}, \ldots, x_{2 n + 1}]_{f N(q)} \geq [x_{2}, x_{3}, \ldots, x_{2 n + 1}]_{r_{1} N(q)} \geq 0.
	\end{align*}
	$f$ is hence $n$-monotone.

	Let us now check that $f$ cannot be extended to any $x_{0} \in (x_{n + 1}, x_{n + 2})$. Indeed, assume that such extension exists; denote also the extension by $f$. Since $r_{1}$ is of degree $n - 1$, we may pick $q \in \C_{n - 1}[x]$ such that
	\begin{align*}
		0 &= [x_{1}, x_{2}, \ldots, x_{2 n}]_{r_{1} N(q)} = [x_{1}, x_{2}, \ldots, x_{2 n}]_{f N(q)}
	\end{align*}
	and $q(x_{0}) \neq 0$. Indeed, pick $q$ with the poles of $r_{1}$ as roots.

	On the other hand by Lemma \ref{refinement_lemma} we have for some $0 \leq t \leq 1$ (actually $t = (x_{2 n + 1} - x_{0})/(x_{2 n + 1} - x_{1})$)
	\begin{align*}
		[x_{1}, x_{2}, \ldots, x_{2 n}]_{f N(q)} &= t [x_{0}, x_{2}, \ldots, x_{2 n}]_{f N(q)} + (1 - t) [x_{0}, x_{1}, x_{2}, \ldots, x_{2 n - 1}]_{f N(q)}.
	\end{align*}
	But this forces at least one of $[x_{0}, x_{2}, \ldots, x_{2 n}]_{f N(q)}$ or $[x_{0}, x_{1}, x_{2}, \ldots, x_{2 n - 1}]_{f N(q)}$ to be zero, since they are both non-negative. Since $q(x_{0}) \neq 0$, in both cases there is an unique value for $f(x_{0})$, which makes the expression $0$. Since $r_{1}(x_{0})$ is such value, we must have $f(x_{0}) = r_{1}(x_{0})$.

	Running a symmetric argument with $r_{2}$ we see that we should also have $f(x_{0}) = r_{2}(x_{0})$, but since $r_{1}$ and $r_{2}$ do not agree outside $F$, this is impossible.
\end{proof}

While the previous result shows that the theory of $n$-monotone functions cannot be in general reduced to intervals, situation still collapses if the set $F$ is wide enough.

\begin{kor}
	(cf. \cite[Theorem 3.5]{Tom2}) Let $F \subset \R$ be set that is not bounded from below or above. Then if $f : F \to \R$ is $2$-monotone, $f$ is affine.
\end{kor}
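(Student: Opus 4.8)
The plan is to use only the forward (necessity) direction of Theorem \ref{general_monotone} for $n = 2$, and to exploit the unboundedness of $F$ in both directions to force every second-order divided difference of $f$ to vanish; since a function all of whose second divided differences vanish is affine, this gives the claim.

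First I would record the relevant instance of the criterion. As $f$ is $2$-monotone, the ``only if'' direction of Theorem \ref{general_monotone} (taken with $k = 2$) yields $[x_{0}, x_{1}, x_{2}, x_{3}]_{f N(q)} \geq 0$ for all pairwise distinct $x_{0}, x_{1}, x_{2}, x_{3} \in F$ and all $q \in \C_{1}[x]$. The key move is to specialize $q(x) = x - d$, taking $d$ to be one of the four nodes: then $N(q) = (x - d)^{2}$ vanishes to second order at $d$, so the summand at the node $d$ drops out of the divided difference. Writing the three surviving nodes as $a, b, c$ and cancelling the common factor $(x_{i} - d)$ from each remaining weight (here the double root of $(x-d)^2$ at the dropped node exactly cancels the simple factor $(x_i-d)$ appearing in each third-order weight), the fourth-order condition collapses to the second-order inequality $[a, b, c]_{(x - d) f} \geq 0$.

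Next I would read this as an affine inequality in the free parameter $d$. By linearity of divided differences in the function, $[a, b, c]_{(x - d) f} = [a, b, c]_{x f} - d\,[a, b, c]_{f}$, so for every fixed triple $a < b < c$ in $F$ the quantity $P - d S$ is non-negative for all $d \in F \setminus \{a, b, c\}$, where $P := [a, b, c]_{x f}$ and $S := [a, b, c]_{f}$ depend only on $a, b, c$. Since $F$ is unbounded from above, letting $d \to +\infty$ forces $S \leq 0$; since $F$ is unbounded from below, letting $d \to -\infty$ forces $S \geq 0$. Hence $S = [a, b, c]_{f} = 0$ for every triple.

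Finally, the vanishing of all second-order divided differences means that $[a, b]_{f}$ does not depend on the chosen pair: from $[a, b, c]_{f} = 0$ one gets $[a, b]_{f} = [b, c]_{f}$, so $[a, b]_{f} \equiv m$ is constant, whence $f(b) - f(a) = m(b - a)$ and $f$ is affine. I expect no serious obstacle here; the only points that require care are that the reduction of the fourth-order condition to $[a, b, c]_{(x - d) f}$ is carried out correctly, and that $P$ is a fixed finite constant, so that the two one-sided limits in $d$ are legitimate and isolate the coefficient $S$.
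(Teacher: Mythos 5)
Your proof is correct and essentially identical to the paper's: the paper likewise applies the necessity direction of Theorem \ref{general_monotone} with $k = 2$ and $q = \cdot - M$ for a fourth node $M \in F$, collapses $[x, y, z, M]_{f (\cdot - M)^{2}}$ to $[x, y, z]_{(\cdot - M) f} = -M [x, y, z]_{f} + [x, y, z]_{(\cdot) f} \geq 0$, and lets $M$ range over arbitrarily large and small values of $F$ to force $[x, y, z]_{f} = 0$. The only differences are cosmetic: you spell out the cancellation of the double root at the dropped node and the final step that vanishing second-order divided differences imply affinity, both of which the paper leaves implicit.
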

\begin{proof}
	Take any three distinct points $x, y, z \in F$. Now since $f$ is $2$-monotone,
	\begin{align*}
		[x, y, z, M]_{f (\cdot - M)^{2}} = [x, y, z]_{f (\cdot - M)} = -M [x, y, z]_{f} + [x, y, z]_{(\cdot) f} \geq 0
	\end{align*}
	for every $M \in F \setminus \{x, y, z\}$. But since $M$ can attain arbitrarily small and large values, this is only possible if $[x, y, z]_{f} = 0$ on $F$, i.e. if $f$ is affine.
\end{proof}

\section{Acknowledgements}

I would like to thank Barry Simon for all the encouragement and fruitful discussions that lead to the birth of this paper, as well as sharing draft of his upcoming book on Loewner's theory \cite{Simon}, source of endless inspiration. In addition, I am deeply grateful to Eero Saksman for the continued support during the times of writing.

\bibliography{trondheim_1}
\bibliographystyle{abbrv}

\end{document}